\newtheorem{theorem}{Theorem}[section]
\newtheorem{proposition}[theorem]{Proposition}
\newtheorem{lemma}[theorem]{Lemma}
\newtheorem{corollary}[theorem]{Corollary}
\theoremstyle{definition}
\newtheorem{definition}[theorem]{Definition}
\newtheorem{example}[theorem]{Example}
\newtheorem{remark}{Remark}
\newcommand{\Z}{\mathbb{Z}}
\newcommand{\N}{\mathbb{N}}
\newcommand{\R}{\mathbb{R}}
\newcommand{\mx}{\mathbf{x}}
\newcommand{\my}{\mathbf{y}}
\newcommand{\mm}{\mathbf{m}}
\newcommand{\E}{\operatorname{{\mathbb E}}}
\renewcommand{\P}{\operatorname{\mathbb{P}}}
\newcommand{\tm}{\mathrm{t_{mix}}}
\newcommand{\T}{\mathcal{T}}
\begin{document}

\title{Random Transpositions on Contingency Tables}
\author{ Mackenzie Simper \thanks{\href{mailto:msimper@stanford.edu}{\nolinkurl{msimper@stanford.edu}}}}
\affil{Stanford University}
\date{}

\maketitle

\begin{abstract}
    Contingency tables are useful objects in statistics for representing 2-way data. With fixed row and column sums, and a total of $n$ entries, contingency tables correspond to parabolic double cosets of $S_n$. The uniform distribution on $S_n$ induces the Fisher-Yates distribution, classical for its use in the chi-squared test for independence.  A Markov chain on $S_n$ can then induce a random process on the space of contingency tables through the double cosets correspondence. The random transpositions Markov chain on $S_n$ induces a natural `swap' Markov chain on the space of contingency tables; the stationary distribution of the Markov chain is the Fisher-Yates distribution. This paper describes this Markov chain and shows that the eigenfunctions are orthogonal polynomials of the Fisher-Yates distribution. Results for the mixing time are discussed, as well as connections with sampling from the uniform distribution on contingency tables, and data analysis.  
    
    %Suppose $\lambda, \mu$ are partitions of $n$. The double cosets of the Young subgroups in $S_n$ generated by $\lambda$ and $\mu$ are indexed by contingency tables with fixed row and column sums given by $\lambda$ and $\mu$. The uniform distribution on $S_n$ induces the Fisher-Yates distribution on contingency tables. This papers studies the Markov chain on contingency tables induced by the random transposition Markov chain on $S_n$. The eigenfunctions of this Markov chain are the orthogonal polynomials of the Fisher-Yates distribution, and this allows for the analysis of the mixing time. 
\end{abstract}

\section{Introduction}

%Intro section for chapter:
%This chapter studies one double coset Markov chain in detail: When $\lambda, \mu$ are two partitions of $n$ and H = S_\lambda, K = S_\mu$ are the Young subgroups of $S_n$ (defined below), the double coset space $S_\lambda \backslash S_n /S_\mu$ is indexed by contingency tables with fixed margins determined by $\lambda$ and $\mu$. The random transpositions Markov chain on $S_n$ induces a natural `swap' Markov chain on the space of contingency tables with Fisher-Yates as stationary distribution. This chapter describes this Markov chain and shows that the eigenfunctions are the orthogonal polynomials of the Fisher-Yates distribution. Results for the mixing time are discussed, as well as connections with sampling from the uniform distribution on contingency tables, and data analysis.  

%----------------------------------------------------------

Contingency tables are a classical object of study, useful in statistics for representing data with two categorical features. For example, rows could be hair colors and columns could be eye colors. The number in an entry in the table counts the number of individuals from a sample with a fixed hair, eye color combination. Various statistical tests exist to test the hypothesis that the row and column features are independent, and other more general models. See e.g.\ \cite{lancaster}, \cite{agresti92} and Section 5 of \cite{diaconisSimper} for many more references. 

Given $\lambda = (\lambda_1, \dots, \lambda_I), \mu = (\mu_1, \dots, \mu_J)$ two partitions of $n$, the set of contingency tables $\T_{\lambda, \mu}$ is the set of $I \times J$ tables with non-negative integer entries with row sums $\lambda_1, \dots, \lambda_I$ and column sums $\mu_1, \dots, \mu_J$. As explained below, the space $\T_{\lambda, \mu}$ is in bijection with the space of double cosets $S_\lambda \backslash S_n / S_\mu$, where $S_n$ is the symmetric group on $n$ elements and $S_\lambda, S_\mu$ are the Young subgroups defined by $\lambda, \mu$. Though this is a classical fact \cite{jamesKerber}, recently it was studied in \cite{diaconisSimper} with probabilistic motivation. The general question is: Given a finite group $G$ and two subgroups $H, K$, what is the distribution on $H \backslash G / K$ induced by picking a element uniformly from $G$ and mapping it to its double coset? For $S_\lambda \backslash S_n / S_\mu$, the induced distribution on $\T_{\lambda, \mu}$ is the \emph{Fisher-Yates} distribution:
\begin{equation}
    \pi_{\lambda, \mu}(T) = \frac{1}{n!}\prod_{i,j} \frac{\lambda_i ! \mu_j!}{T_{ij}!}, \quad T = \{T_{ij} \}_{1 \le i \le I, 1 \le j \le J} \in \T_{\lambda, \mu}.
\end{equation}

The Fisher-Yates distribution is classically defined as the conditional distribution on a contingency table sampled with cell probabilities $p_{ij}$ which satisfy $p_{ij} = \theta_{i} \eta_j$ (the assumption that row and column features are independent). It is the distribution of the table conditioned on the sufficient statistics (the row and column sums). It is a pleasant surprise that this same distribution is induced by the double coset decomposition.

Whenever there is a Markov chain on some state space, one can consider the lumped process on any partitioning of this space. In \cite{diaconisRamSimper} the question is studied for double cosets: Given a Markov chain on $G$, is the lumped process on $H \backslash G / K$ also a Markov chain? An affirmative answer is proven when the Markov chain on $G$ is induced by a probability measure on $G$ which is constant on conjugacy classes. 

Many Markov chains on $S_n$ have been studied, and perhaps the most famous and easiest to describe is the \emph{random transpositions} Markov chain \cite{diaconisShahshahani1981}. This is generated by the class function $Q$ which gives equal probability to transpositions. The Markov chain on $\T_{\lambda, \mu}$ induced by random transpositions turns out to be novel and easy to describe. One move involves adding a sub-matrix of the form $\begin{pmatrix} +1 & -1 \cr -1 & +1 \end{pmatrix}$ to the current table; the probability of selecting the rows/columns for the sub-matrix depend on the current configuration. Since the stationary distribution of the random transpositions chain on $S_n$ is uniform, the induced chain on $\T_{\lambda, \mu}$ has Fisher-Yates distribution as stationary distribution. These statements are proven in Section \ref{sec: CTsubIntro} below.

This same move has previously been studied for a Markov chain on contingency tables with rows and columns for the sub-matrix chosen uniformly at random (and moves which would give a negative entry in the table are rejected). This `uniform swap' chain was proposed in \cite{diaconisGangoli} to sample contingency tables from the uniform distribution, as a way of approximately counting the number of tables, which is a \# -P complete problem.

The purpose of this paper is to describe the random transposition chain on $\T_{\lambda, \mu}$, characterize its eigenvalues and eigenvectors, and analyze the mixing time in special cases. This chain has polynomial eigenvectors, which can be used to give orthogonal polynomials for the Fisher-Yates distribution. In the special case of a two-rowed table, the Fisher-Yates distribution is the multivariate hypergeometric distribution and the orthogonal polynomials are the multivariate Hahn polynomials. More generally, this Markov chain gives a characterization of the natural analog of Hahn polynomials for the Fisher-Yates distribution.

%One final note: there has been a lot of study on the \emph{uniform distribution} on the space of tables with fixed row and column sums. This was introduced with statistical motivation in \cite{DEfron}. The central problem has been efficient generation of such tables; enumerative theory is also natural but remains to be developed. See \cite{DG}, \cite{CDHL}, \cite{dittmer2019thesis}, \cite{DittmerLyuPak}, \cite{barvinok2008} and their references. The Fisher-Yates distribution \eqref{eqn: fisherNew} is quite different from the uniform and central to both the statistical applications and to the main pursuits of the present paper.

The remainder of this section reviews the correspondence between contingency tables and double cosets, defines the Markov chain explicitly, states the main results of this paper, and reviews related work.

\subsection{Contingency Tables as Double Cosets} \label{sec: CTsubIntro}

This section describes the relationship between contingency tables and double cosets of $S_n$. Let $n$ be a positive integer and suppose $\lambda = (\lambda_1, \lambda_2, \dots, \lambda_I)$ is a partition of $n$. That is, $\lambda_i$ are all positive integers, $\lambda_1 \ge \lambda_2 \ge \hdots \ge \lambda_I > 0$, and $\lambda_1 + \lambda_2 + \hdots + \lambda_I = n$. 

\begin{definition}[Young Subgroup]
The \emph{parabolic subgroup}, or \emph{Young subgroup}, $S_\lambda$ is the set of all permutations in $S_n$ which permute only $\{1, 2, \dots, \lambda_1 \}$ among themselves, only $\{\lambda_1 + 1, \dots, \lambda_1 + \lambda_2 \}$ among themselves, and so on. Thus,
\[
S_\lambda \cong S_{\lambda_1} \times S_{\lambda_2} \times \hdots \times S_{\lambda_I}.
\]
\end{definition}

If $G$ is an arbitrary finite group and $H, K$ are two sub-groups of $G$, then the $H-K$ \emph{double-cosets} are the equivalence classes defined by the equivalence relation
\[
s \sim t \iff s = h t k \,\,\,\,\,\,\,\, \text{for} \,\,\,\,\,\,\,\, s, t \in G, \,\,\, h \in H, \,\,\, k \in K.
\]

Let $\mu = (\mu_1, \dots, \mu_J)$ be a second partition of $n$, let $\T_{\lambda, \mu}$ denote the space of contingency tables with row sums given by $\lambda$ and column sums $\mu$. That is, an element of $\T_{\lambda, \mu}$ is an $I \times J$ table with non-negative integer entries, row sums $\lambda_1, \dots, \lambda_I$ and column sums $\mu_1, \dots, \mu_J$. The following bijection is described in further detail in \cite{jamesKerber}, and the induced distribution, along with many more references, in \cite{diaconisSimper}.

\begin{lemma} \label{lem: CTcorrespondence}
The set of double-cosets $S_\lambda \backslash S_n / S_\mu$ is in bijection with $\T_{\lambda, \mu}$. The distribution on $\T_{\lambda, \mu}$ induced by sampling a uniform $\sigma \in S_n$ and mapping it to it's corresponding double-coset is the \emph{Fisher-Yates} distribution
 \begin{equation} \label{eqn: fisher-yates}
    \pi_{\lambda, \mu}(T) = \frac{1}{n!} \prod_{i, j} \frac{\mu_i! \lambda_j!}{T_{ij}!}, \quad T \in \T_{\lambda, \mu}.
    %p(T) = \prod_{j = 1}^J \binom{\mu_j}{T_{1, j}, T_{2, j}, \dots, T_{I, j}} / \binom{n}{\lambda_1, \dots, \lambda_I},
    \end{equation}
\end{lemma}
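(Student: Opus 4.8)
The plan is to establish the bijection $S_\lambda \backslash S_n / S_\mu \leftrightarrow \T_{\lambda,\mu}$ explicitly and then compute the pushforward of the uniform measure under it. First I would fix the standard set partitions associated to $\lambda$ and $\mu$: let $R_1, \dots, R_I$ be the consecutive blocks $\{1,\dots,\lambda_1\}, \{\lambda_1+1,\dots,\lambda_1+\lambda_2\}, \dots$ of sizes $\lambda_1,\dots,\lambda_I$, and similarly $C_1,\dots,C_J$ the blocks of sizes $\mu_1,\dots,\mu_J$. Given $\sigma \in S_n$, define the $I \times J$ table $T(\sigma)$ by $T(\sigma)_{ij} = |R_i \cap \sigma(C_j)|$ (equivalently $|\sigma^{-1}(R_i) \cap C_j|$). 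The row sums are $\sum_j |R_i \cap \sigma(C_j)| = |R_i| = \lambda_i$ and the column sums are $\mu_j$, so $T(\sigma) \in \T_{\lambda,\mu}$. The key step is to check that $T(\sigma) = T(\tau)$ if and only if $\tau = h\sigma k$ for some $h \in S_\lambda$, $k \in S_\mu$: one direction is immediate since $h$ permutes within each $R_i$ and $k$ within each $C_j$, leaving the intersection sizes unchanged; the converse follows because if the intersection sizes agree, one can build $h$ and $k$ block by block as bijections aligning $\sigma(C_j)$ with $\tau(C_j)$ through each $R_i$. Surjectivity onto $\T_{\lambda,\mu}$ is clear: given a target table, partition each $C_j$ into pieces of the prescribed sizes and map them into the corresponding $R_i$'s.

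With the bijection in hand, the induced distribution assigns to $T$ the probability $|\{\sigma : T(\sigma) = T\}|/n!$, so the main computation is counting the number of permutations $\sigma$ realizing a fixed table $T$. I would count this directly: a permutation $\sigma$ with $T(\sigma) = T$ is determined by, for each column block $C_j$, a choice of how to distribute its $\mu_j$ elements among the row blocks according to the entries $T_{1j}, \dots, T_{Ij}$, together with a bijection onto the chosen positions. Organizing this, the count factors as the number of ways to partition each $R_i$ into labeled pieces of sizes $T_{i1}, \dots, T_{iJ}$ — giving $\prod_i \binom{\lambda_i}{T_{i1}, \dots, T_{iJ}} = \prod_i \frac{\lambda_i!}{\prod_j T_{ij}!}$ — times the number of ways to partition each $C_j$ similarly — $\prod_j \frac{\mu_j!}{\prod_i T_{ij}!}$ — times the number of bijections between matched pieces, which is $\prod_{i,j} T_{ij}!$. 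Multiplying these and simplifying, the number of $\sigma$ with $T(\sigma) = T$ is $\prod_{i,j}\frac{1}{T_{ij}!}\prod_i \lambda_i! \prod_j \mu_j!$, and dividing by $n!$ yields exactly the Fisher-Yates formula \eqref{eqn: fisher-yates}. (One can also sanity-check by verifying $\sum_T \pi_{\lambda,\mu}(T) = 1$ via a multinomial identity, or by noting the count above is a standard triple-product formula for double coset sizes.)

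The main obstacle is getting the counting argument for $|\{\sigma : T(\sigma)=T\}|$ cleanly right — it is easy to over- or under-count because one must be careful about what data determines $\sigma$ versus what is redundant. The cleanest bookkeeping is to think of $\sigma$ as a perfect matching between the $n$ "column slots" (element positions grouped by $C_j$) and the $n$ "row slots" (grouped by $R_i$); specifying $T$ fixes how many column-$j$ slots match row-$i$ slots, and then the remaining freedom is exactly a product of bijections on the $T_{ij}$-sized matched groups, which is where the $\prod_{i,j} T_{ij}!$ comes from and where it then cancels against the denominators of the two multinomial coefficients. Alternatively, since the bijection itself is the classical fact from \cite{jamesKerber}, one may simply cite it and focus the proof on the distribution computation, presenting the count as $|S_\lambda \sigma S_\mu| = \frac{|S_\lambda||S_\mu|}{|S_\lambda \cap \sigma S_\mu \sigma^{-1}|}$ and identifying the intersection as $\prod_{i,j} S_{T_{ij}}$, which gives the same answer with less combinatorial risk.
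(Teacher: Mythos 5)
Your proof is correct and complete. The paper does not itself prove this lemma --- it cites \cite{jamesKerber} for the bijection and \cite{diaconisSimper} for the induced distribution, giving only an informal description of the map $\sigma \mapsto T(\sigma)$ (phrased by reading off $\sigma$ in one-line notation, i.e.\ the inverse of your convention, which is immaterial here since the uniform measure is inversion-invariant). Your well-definedness argument is sound: aligning the blocks $\sigma^{-1}(R_i)\cap C_j$ and $\tau^{-1}(R_i)\cap C_j$ constructs $k \in S_\mu$, after which $\tau(\sigma k)^{-1}$ is automatically in $S_\lambda$. The count is also exactly right: $\prod_j \mu_j!/\prod_i T_{ij}!$ ways to sort each column block into row blocks, times $\prod_i \lambda_i!/\prod_j T_{ij}!$ ways to choose the target positions within each row block, times $\prod_{i,j} T_{ij}!$ matchings, for a total of $\prod_i \lambda_i! \prod_j \mu_j! / \prod_{i,j} T_{ij}!$ permutations in the double coset of $T$, and dividing by $n!$ gives Fisher--Yates. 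Your alternative via $|S_\lambda \sigma S_\mu| = |S_\lambda|\,|S_\mu|/|S_\lambda \cap \sigma S_\mu \sigma^{-1}|$ with $S_\lambda \cap \sigma S_\mu \sigma^{-1} \cong \prod_{i,j} S_{T_{ij}}$ is indeed the cleanest version of the same count, is less prone to double-counting errors, and matches the standard treatment in \cite{jamesKerber}; either route is acceptable here.
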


\begin{remark}
Without loss of generality, we assume that the rows and columns of the contingency tables $\T_{\lambda, \mu}$ are ordered, i.e.\ the first row/column has the largest sum and then row/column sums are decreasing. This ordering does not effect any of the results in this paper. 
\end{remark}

The mapping from $S_n$ to tables is easy to describe: Fix $\sigma \in S_n$. Inspect the first $\lambda_1$ positions in $\sigma$. Let $T_{11}$ be the number of elements from $\{1, 2, \dots, \mu_1 \}$ occurring in these positions, $T_{12}$ the number of elements from $\{\mu_1 + 1, \dots, \mu_1 + \mu_2 \}$, \dots and $T_{1J}$ the number of elements from $\{n - \mu_{J} + 1, \dots, n \}$. In general, $T_{ij}$ is the number of elements from $\{\mu_1 + \hdots + \mu_{i -1} + 1, \dots, \mu_1 + \hdots + \mu_j \}$ which occur in the positions $\lambda_1 + \lambda_2 + \hdots + \lambda_{i - 1} + 1$ up to $\lambda_1 + \hdots + \lambda_i$.

\begin{example} \label{ex: sigmaT}
When $n = 5$, $\lambda = (3, 2)$, $\mu = (2, 2, 1)$ there are five possible tables:
\begin{align*}
& \begin{pmatrix}
2 & 1 & 0 \cr 
0 & 1 & 1
\end{pmatrix} \,\,\,\,\,\,\,\,\,\,\,  \begin{pmatrix}
2 & 0 & 1 \cr 
0 & 2 & 0
\end{pmatrix}
\,\,\,\,\,\,\,\,\,\,\, \begin{pmatrix}
1 & 2 & 0 \cr 
1 & 0 & 1
\end{pmatrix}
\,\,\,\,\,\,\,\,\,\,\,  \begin{pmatrix}
1 & 1 & 1\cr 
1 & 1 & 0
\end{pmatrix}
\,\,\,\,\,\,\,\,\,\,\,  \begin{pmatrix}
0 & 2 & 1 \cr 
2 & 0 & 0
\end{pmatrix} \\
&\hspace{4mm} \sigma = 12345  \hspace{10mm} \sigma = 12534  \hspace{10mm} \sigma = 13425 \hspace{10mm} \sigma = 13524 \hspace{10mm} \sigma = 34512 \\
&\,\,\,\,\,\,\,\,\,\,\,\, 24  \hspace{23mm} 12  \hspace{23mm} 24 \hspace{22mm} 48 \hspace{22mm} 12
\end{align*}
Listed below each table is a permutation in the corresponding double coset, and the total size of the double coset. The double coset representatives are chosen to be the one of minimal length, i.e.\ the fewest number of adjacent transpositions to change the permutation to the identity. It is always possible to a unique shortest double coset representative \cite{billeyKonvPeterSlofstra}. This is easy to identify: Given $T$, build $\sigma$ sequentially, left to right, by putting down $1, 2, \dots, T_{11}$ then $\mu_1 + 1, \mu_1 + 2, \dots, \mu_1 + T_{12}$ ... each time putting down the longest available numbers in the $\mu_j$ block, in order. In example \ref{ex: sigmaT}, the shortest double coset representatives are shown. 
\end{example}

Thanks to Zhihan Li, another way to describe the mapping from $S_n$ to contingency tables is to consider the $n \times n$ permutation matrix defined by $\sigma$. The partition $\mu$ divides the columns into $J$ blocks and $\lambda$ divides the rows into $I$ blocks. The table $T$ corresponding to $\sigma$ is defined by $T_{ij}$ as the number of $1$s in the $i$ block of rows and $j$ block of columns. For example, $\sigma = 12543$ gives the permutation matrix

\begin{table}[h]
\begin{center}
\begin{tabular}{l || c c | c  c | c }
 &   $\mu_1$ &  &   $\mu_2$ & & $\mu_3$   \\ \hline \hline
$\lambda_1$ &    1 & 0 & 0 & 0 & 0 \\
&    0 & 1 & 0 & 0 & 0  \\ 
 &    0 & 0 & 0 & 0 & 1 \\ \hline
$\lambda_2$ &    0 & 0 & 0 & 1 & 0 \\ 
 &    0 & 0 & 1 & 0 & 0 
\end{tabular}
\end{center}
\end{table}

which defines the table $\begin{pmatrix}
2 & 0 & 1 \cr 
0 & 2 & 0
\end{pmatrix}$.

\subsection{Markov Chain and Main Results}

The \emph{random transpositions} chain on $S_n$ is easy to describe. If a permutation $\sigma \in S_n$ is viewed as an arrangement of a deck of $n$ unique cards, the random transpositions chain can be described: Pick one card with your left hand and one card with your right hand (allowing for the possibility of picking the same card), and swap the two cards. The transition probabilities are generated by the probability measure $Q$ on $S_n$ defined
\begin{equation} \label{eqn: Q}
Q(\sigma) = \begin{cases}
\frac{2}{n^2} & \text{if} \,\,\,\ \sigma = (i, j), i < j \\
\frac{1}{n} & \text{if} \,\,\,\ \sigma = id \\
0 & \text{otherwise},
\end{cases}
\end{equation}
where $(i, j)$ denotes the transposition of elements $i$ and $j$. The transition matrix for the random transposition Markov chain is then
\[
P(x, y) = Q(y \cdot x^{-1}), \,\,\,\,\,\,\,\, x, y \in S_n.
\]
Note that $Q$ is constant on conjugacy classes: If $i < j, k < \ell$ and $x = (ij), y = (k \ell)$ are two transpositions, then
\begin{align*}
    y x y^{-1} = (k \ell) \cdot (ij) \cdot (k \ell) = \begin{cases} 
    (ij) & \text{if} \,\, i \neq k, j \neq \ell  \,\, \text{or} \,\, i = k, j = \ell \\
    (j \ell) & \text{if} \,\, i = k, j \neq \ell \\
    (k i) & \text{if} \,\, j = \ell, i \neq k
    \end{cases}.
\end{align*}
Since transpositions generate $S_n$, this proves that if $x$ is a transposition, then $yxy^{-1}$ is a transposition for any $y \in S_n$, and thus $Q(yxy^{-1}) = Q(x)$.

The random transpositions chain on $S_n$ induces a Markov chain on $\T_{\lambda, \mu}$ as a `lumping'. From a table $T \in \T_{\lambda, \mu}$, choose a permutation $x \in S_n$ that maps to the double coset corresponding to $T$. From $x$, sample $y$ from $P(x, \cdot)$ and move to the contingency table corresponding to $y$. More details of this lumping are discussed in Section \ref{sec: lumped}, and this perspective will be useful for relating the well-studied eigenvalues of the chain on $S_n$ to the chain on $\T_{\lambda, \mu}$. It is also possible to write down the transition probabilities independently of the chain on $S_n$.

\begin{definition}[Random Transpositions Markov chain on $\T_{\lambda, \mu}$] \label{def: chain}
Let $\lambda = (\lambda_1, \dots, \lambda_I), \mu = (\mu_1, \dots, \mu_J)$ be two partitions of $n$. For $T \in \T_{\lambda, \mu}$ and indices $1 \le i_1, i_2 \le I$ and $1 \le j_1, j_2 \le J$, let $F_{(i_1, j_1), (i_2, j_2)}(T) = T'$ denote the table with
\begin{align*}
&T_{i_1, j_1}' = T_{i_1, j_1} - 1, \\
&T_{i_2, j_2}' = T_{i_2, j_2} - 1, \\
&T_{i_1, j_2}' = T_{i_1, j_2} + 1, \\
&T_{i_2, j_1}' = T_{i_2, j_1} + 1,
\end{align*}
and all other entries of $T'$ the same as $T$. 
%Let $\mathcal{N}(T) = \{ T' \in \mathcal{T}_{\lambda, \mu} : T' = F_{(i_1, j_1), (i_2, j_2)}(T),1 \le i_1, i_2 \le I$ and $1 \le j_1, j_2 \le J \}$.  
The random transpositions Markov chain on $\T_{\lambda, \mu}$ is defined by the transition matrix $P$ with:
\[
P(T, T') = \begin{cases}
\frac{2 \cdot T_{i_1, j_1} \cdot T_{i_2, j_2}}{n^2} & \text{if} \,\,\, T' = F_{(i_1, j_1), (i_2, j_2)}(T) \\
1 - \sum_{i_1 < i_2} \sum_{j_1 < j_2} \frac{2 \cdot T_{i_1, j_1} \cdot T_{i_2, j_2}}{n^2} & \text{if} \,\,\, T' = T \\
0 & \text{otherwise}
\end{cases}.
\]
\end{definition}

Note if $T_{i_1, j_1} = 0$ or $T_{i_2, j_2} = 0$, then $F_{(i_1, j_1), (i_2, j_2)}(T)$ has negative values and is not an element of $\T_{\lambda, \mu}$. Definition \ref{def: chain} does not allow selecting this possibility, since only pairs with $T_{i_1, j_1}T_{i_2, j_2} \ge 1$ will be chosen.

There are other ways of thinking about the Markov chain. Suppose the contingency table is created from $n$ data pairs of the type $(i, j)$, where $i$ indicates the row feature and $j$ indicates the column feature. One step of the chain is to pick two pairs uniformly (possibly picking the same pair) and swapping the column values (or equivalently, swapping the row values). This is exactly the \emph{swap Markov chain} on bi-partite graphs, which has been studied for various special cases, e.g.\ \cite{amanatidis2020}.

%NTDNN add figure of hypergraph swap

\begin{example}
$\lambda = (3, 2), \mu = (2, 1, 1)$: \vspace{5mm}
\begin{align*}
&T^x = \begin{pmatrix}
2 & 1 & 0 \cr 
0 & 1 & 1
\end{pmatrix} \hspace{10mm} \longrightarrow  \hspace{10mm} T^y =\begin{pmatrix}
1 & 2 & 0 \cr 
1 & 0 & 1
\end{pmatrix} \\
& \\
&\hspace{5mm} x = 12345 \hspace{16mm} \longrightarrow \hspace{10mm}  y = 1\mathbf{4}3\mathbf{2}5
\end{align*}
The probability of this transition for the tables is $2/5^2$, since there are two possible transpositions in the permutation $x$ that would result in the table $T^y$. Representing the table as a set of $n$ data points, this transition is:
\begin{align*}
T^x: (1, 1), (1, 1), (1, 2), (2, 2), (2, 3) \longrightarrow T^y = (1, 1), (1, \mathbf{2}), (1, 2), (2, \mathbf{1}), (2, 3). 
\end{align*}
\end{example}

\begin{remark}
If $I = J = 2$, then this is related to the \emph{Bernoulli-Laplace} chain: Suppose $\lambda = (n - k, k), \mu = (n - j, j)$. Consider two urns. The first contains $n - k$ total balls and the second has $k$ balls. Of these $n$ total balls, $n - j$ are green and $j$ are red. In the contingency table, the rows represent urns and the columns colors. In the traditional Bernoulli-Laplace urn, one ball is picked uniformly from each urn and swapped. The random transpositions chain Definition \ref{def: chain} can be described by instead picking $2$ balls uniformly from all $n$ balls and swapping (which creates the possibility both are selected from the same urn).

Using the spherical Fourier transform of the Gelfand pair $(S_{2n}, S_n \times S_n)$, Diaconis and Shahshahani proved mixing time of order $(n/4) \log(n)$ steps for the Bernoulli-Laplace chain in a special case (for $2n$ total balls in the system); for thorough analyses of the Bernoulli-Laplace chain, see \cite{diaconisShah}, \cite{nestoridi}, \cite{ceccherini}. An extension of the Bernoulli-Laplace chain with $m$ urns is studied in \cite{scarabotti}; this state space corresponds to $\T_{\lambda, \mu}$ for $\lambda, \mu$ partitions of $mn$ with $\lambda = 1^{mn}, \mu = (n, n, \dots, n)$.
%Could say a lot more, do a more thorough lit review? I think this is the uniform swap chain specialized to $2 \times 2$
\end{remark}

\begin{remark} \label{remark: holding}
The random transpositions chain is sometimes defined with the probability measure:
\begin{equation*} 
\widetilde{Q}(\sigma) = \begin{cases}
\frac{1}{\binom{n}{2}} & \text{if} \,\,\,\ \sigma = (i, j), i < j \\
0 & \text{else}
\end{cases}.
\end{equation*}
That is, the random walk on $S_n$ is described: pick a card with your left hand and pick a different card with your right hand. This means the permutation will always change at each step, but then the process is periodic. Using $Q$ from \eqref{eqn: Q} (i.e.\ allowing the possibility of picking the same card) the chain is aperiodic, which allows for easier analysis of the chain on $S_n$.

Note that as long as $\lambda, \mu$ are not both equal to $(1, 1, \dots, 1)$ (in which case $S_\lambda \backslash S_n / S_\mu \cong S_n$), the chain using probabilities $1/\binom{n}{2}$ is not periodic: If there is at least one row or column with $2$ entries, e.g.\ $(i, j)$ and $(i, l$), then picking these two entries to swap does not change the table. Thus, using $\widetilde{Q}$ to define the induced process on $\T_{\lambda, \mu}$ is also possible. The analysis and mixing times are the same for both $Q$ and $\widetilde{Q}$, except the chains have slightly different eigenvalues. 
\end{remark}

The interpretation of the chain as a lumping of the random transpositions chain on $S_n$ is advantageous because the chain on contingency tables inherits its eigenvalues from the chain on $S_n$, and these eigenvalues are well-known. Representation theory also gives an expression for the multiplicities of the eigenvalues in terms of \emph{Kostka numbers} corresponding to partitions. Section \ref{sec: lumped} reviews this background in more detail, and thoroughly explains part (b) of the following theorem, which are the results for eigenvalues and multiplicities.

\begin{theorem} \label{thm: ev}
Let $\lambda = (\lambda_1, \dots, \lambda_I), \mu = (\mu_1, \dots, \mu_J)$ be partitions of $n$ and $P$ the random transpositions Markov chain on the space of contingency tables $\mathcal{T}_{\lambda, \mu}$. 
\begin{enumerate}[(a)]
    \item The eigenvalues $\beta_\rho$ are of the form
    \[
    \beta_\rho = \frac{1}{n} + \frac{1}{n^2} \sum_{j = 1}^k \left[ (\rho_j - j)(\rho_j - j + 1) - j(j-1) \right],
    \]
    for $\rho = (\rho_1, \dots, \rho_k)$ a partition of $n$.
    
    \item The multiplicity of $\beta_\rho$ for $P$ is $m_\rho^\lambda \cdot m_\rho^\mu$, where $m_\rho^\lambda$ is the Kostka number (defined below). %should I explain this more???
    
    %The multiplicity of $\beta_\rho$ for $P}$ is \[
    %\left\langle \chi_\rho, IND_{S_\lambda}^{S_n}(1) \right\rangle \cdot \left\langle \chi_\rho, IND_{S_\mu}^{S_n}(1) \right\rangle,
    %\]
    %where $\chi_\rho$ is the character of the irreducible representation of $S_n$ corresponding to $\rho$
    
    \item Eigenfunctions of $P$ are the orthogonal polynomials for the Fisher-Yates distribution. 
\end{enumerate}
\end{theorem}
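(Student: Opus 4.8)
The plan is to prove the three parts of Theorem \ref{thm: ev} using the lumping framework, treating (a) and (b) as essentially inherited from the well-known spectral theory of random transpositions on $S_n$, and (c) as a general structural fact about lumped reversible chains whose eigenfunctions are graded.

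First I would set up the lumping carefully. Let $\pi = \pi_{\lambda,\mu}$ be the Fisher-Yates distribution and let $V : \T_{\lambda,\mu} \to S_n$ be the equalizing map sending a table $T$ to the uniform distribution on the double coset $S_\lambda \sigma S_\mu$ corresponding to $T$; its adjoint is the lumping map $L : S_n \to \T_{\lambda,\mu}$. Because $Q$ is a class function, the result of \cite{diaconisRamSimper} guarantees that the induced process on $\T_{\lambda,\mu}$ is a genuine Markov chain, and its transition matrix is exactly $P$ of Definition \ref{def: chain}; moreover $\pi$ is stationary because the uniform distribution is stationary for random transpositions on $S_n$. The key intertwining identity is that for the function spaces, $\ell^2(\T_{\lambda,\mu}, \pi)$ embeds isometrically into $\ell^2(S_n)$ as the subspace of functions that are bi-invariant under $S_\lambda$ (left) and $S_\mu$ (right), and $P$ on $\T_{\lambda,\mu}$ is the restriction of $P_{S_n}$ to this invariant subspace. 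For part (a), the eigenvalues of random transpositions on $S_n$ are indexed by partitions $\rho \vdash n$ and equal $\frac{1}{n} + \frac{n-1}{n}\cdot\frac{\chi^\rho(\text{transposition})}{\dim\rho}$, and the standard content formula $\frac{\binom{n}{2}\chi^\rho(\tau)}{\dim\rho} = \sum_j \binom{\rho_j}{2} - \binom{\rho_j'}{2}$ (or the equivalent form with $(\rho_j-j)$) rearranges into the stated $\beta_\rho$; the eigenvalues that actually appear on $\T_{\lambda,\mu}$ are those $\rho$ whose isotypic component meets the bi-invariant subspace. For part (b), the dimension of the $\rho$-isotypic component inside the $S_\lambda$-invariants of $\mathbb{C}[S_n/S_\mu]$ (equivalently the multiplicity of $S^\rho$ in the permutation module $M^\mu$ restricted appropriately) is, by Young's rule, the Kostka number $m_\rho^\mu = K_{\rho\mu}$, and combining the left and right invariance gives the product $m_\rho^\lambda \cdot m_\rho^\mu$.

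For part (c), the approach is: (i) $P$ is reversible with respect to $\pi$ — this follows either directly from the symmetric form of the transition probabilities in Definition \ref{def: chain} together with the explicit $\pi$, or abstractly because $P_{S_n}$ is reversible (indeed symmetric, since $Q$ is symmetric) with respect to the uniform distribution and lumping preserves reversibility. Hence $P$ has a basis of real eigenfunctions orthogonal in $\ell^2(\pi)$. (ii) The eigenfunctions can be taken to be polynomials in the entries $\{T_{ij}\}$: this is where one uses the structure of the chain. The operator $P - \frac{1}{n}\mathrm{Id}$ is a sum over pairs of rows and pairs of columns of the local swap operators, and each such operator acts on the algebra of polynomials in the $T_{ij}$ by lowering or preserving total degree — applying $F_{(i_1,j_1),(i_2,j_2)}$ shifts four entries by $\pm 1$, so $(Pf)(T)$ for a degree-$d$ polynomial $f$ is again a polynomial of degree $\le d$. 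Thus $P$ preserves the finite-dimensional filtration $\mathcal{P}_0 \subset \mathcal{P}_1 \subset \cdots$ of polynomials of bounded degree restricted to $\T_{\lambda,\mu}$, so it is block-triangular with respect to the graded pieces and therefore has a full eigenbasis of polynomials. (iii) Gram–Schmidt these polynomial eigenfunctions against $\pi$ degree by degree: since $P$ is self-adjoint and respects the filtration, eigenfunctions in distinct eigenspaces are automatically $\pi$-orthogonal, and within the span of degree-$\le d$ polynomials one obtains an orthogonal set; this exhibits the eigenfunctions as the orthogonal polynomials for $\pi$, matching the classical picture where for $I=2$ they are the multivariate Hahn polynomials. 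One must check that the polynomials in the $T_{ij}$ actually span all of $\ell^2(\T_{\lambda,\mu},\pi)$ — this holds because $\T_{\lambda,\mu}$ is finite and the coordinate functions separate points, so their algebra is all of $\ell^2$.

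The main obstacle is step (ii)–(iii) of part (c): showing not merely that $P$ preserves the degree filtration (which is a short degree-count) but that the resulting graded eigenbasis genuinely \emph{is} the family of orthogonal polynomials, i.e.\ that the eigenspace of $\beta_\rho$ intersected with $\mathcal{P}_d$ has the right dimension so that the orthogonalization produces exactly one (up to scalar, per multiplicity) orthogonal polynomial of each degree. Pinning this down cleanly requires matching the degrees of the polynomial eigenfunctions to the partitions $\rho$ — one expects the eigenfunctions associated to $\rho$ with $|\rho| = n$ and "depth" $n - \rho_1$ to have polynomial degree $n - \rho_1$ — and verifying this correspondence is the technical heart; for $I=2$ it can be cross-checked against the known Hahn polynomial eigenstructure, and in general it is cleanest to argue via the $S_n$ side, identifying the degree-$d$ polynomials on $\T_{\lambda,\mu}$ with the image of the $\le d$-th piece of a suitable filtration of the bi-invariant functions and matching it to the sum of isotypic components $S^\rho$ with $n - \rho_1 \le d$.
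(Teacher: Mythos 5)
Your treatment of parts (a) and (b) follows the paper's route faithfully: invoke the double-coset lumping result from \cite{diaconisRamSimper} (the paper's Proposition \ref{prob: DCev}), combine the Diaconis--Shahshahani character-ratio formula (Lemma \ref{lem: RTSn}) with Young's rule to get the Kostka-number multiplicities. That part is fine.

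For part (c), there is a concrete gap in step (ii), and it is precisely the step the paper spends its effort on. You assert that since each move $F_{(i_1,j_1),(i_2,j_2)}$ shifts four entries by $\pm 1$, the Markov operator $P$ sends degree-$d$ polynomials to degree-$\le d$ polynomials, and you call this ``a short degree-count.'' It is not, and the naive count actually fails. Writing
\[
(Pf)(T) - f(T) \;=\; \sum_{\text{moves}} \frac{2\, T_{i_1 j_1} T_{i_2 j_2}}{n^2}\bigl[f(F_{(i_1,j_1),(i_2,j_2)}(T)) - f(T)\bigr],
\]
the bracket is a polynomial of degree $d - 1$ (unit shifts kill the top-degree term of $f$), but the transition probability $2T_{i_1 j_1}T_{i_2 j_2}/n^2$ is quadratic in $T$, so each summand is degree $d+1$. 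Already for $f(T) = T_{ij}$ one gets degree-$2$ terms $-\tfrac{2}{n^2} T_{ij}(n - \lambda_i - \mu_j + T_{ij}) + \tfrac{2}{n^2}(\lambda_i - T_{ij})(\mu_j - T_{ij})$; the $T_{ij}^2$ terms cancel only because of the specific form of these probabilities and the fixed row/column sums, and that cancellation is the substance of the claim, not a formal consequence of the $\pm 1$ shift structure. The paper establishes it by explicit expansion of $\E[T_{t+1}(i,j)^m \mid T_t = \mx]$ (Section \ref{sec: poly}), using the birth--death transition probabilities \eqref{eqn: t1}--\eqref{eqn: t2}, showing the degree-$(m+1)$ and $(m+2)$ terms cancel and the degree-$m$ term survives with coefficient $1 - 2m(n+1-m)/n^2$.

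Your step (iii) is also the obstacle you yourself flag, and the paper resolves it not by the abstract filtration/isotypic-component matching you sketch but by citing a specific ready-made lemma of Khare--Zhou (the paper's Lemma \ref{lem: khare}): once one has $\E[T_1^{\mathbf m} \mid T_0 = \mx] = \beta_{|\mathbf m|}\mx^{\mathbf m} + (\text{lower degree})$ with a coefficient $\beta_{|\mathbf m|}$ depending only on total degree, the orthogonal polynomials are automatically the eigenfunctions and the chi-squared distance decomposes into kernel polynomials. So the paper's proof of (c) is the direct computation plus Khare's lemma; it does not pass through the $S_n$-side filtration argument, and your route would still need that dimension-matching to be carried out. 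In short: (a), (b) are essentially the paper's argument; for (c) you have a genuine gap where you claim the degree filtration is preserved ``by a short degree-count,'' and the cleanup you defer to at the end is exactly what Khare's lemma packages up once the explicit moment computation is done.
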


%Note that Theorem \ref{thm: ev} does not specify the multiplicity of each eigenvalue, or even the number of unique eigenvalues, as this is not given by the methods of the proof.  
Theorem \ref{thm: ev} (a) and (b) arise from the general theory for Markov chains on double cosets developed in \cite{diaconisRamSimper} and \cite{diaconis2013random} and summarized in Section \ref{sec: lumped}. Section \ref{sec: evSection} contains a proof of Theorem \ref{thm: ev}(c) as well as a description of the multiplicities. It seems unlikely that there would be a simple formula for the number and multiplicity of the eigenvalues, as this would give a way of enumerating the size of the state space $\T_{\lambda, \mu}$, which is a \#- P complete problem \cite{diaconisGangoli}. The orthogonal polynomials of the Fisher-Yates distribution have not been explicitly computed. Section \ref{sec: poly} computes linear and quadratic eigenfunctions of $P$, for any size table, which can be used as a starting point for finding orthogonal polynomials. 

For $2 \times J$ tables, the stationary distribution is multi-variate hypergeometric, and the computation of the eigenvalues simplifies to give a more complete picture. 

\begin{theorem} \label{thm: 2Jev}
Let $\lambda = (n -k, k)$ for some $k \le \lfloor n/2 \rfloor$, $\mu = (\mu_1, \dots, \mu_J)$, and $P$ be the random swap Markov chain on the space of contingency tables $\mathcal{T}_{\lambda, \mu}$. Then the eigenvalues of $P$ are
\[
\beta_m = 1 - \frac{2 m(n + 1 - m)}{n^2}, \,\,\,\,\,\, 0 \le m \le k,
\]
with eigenbasis the \emph{orthogonal polynomials} of degree $m$ for the multivariate hypergeometric distribution (defined below). The multiplicity of $\beta_m$ is the size of the set 
\[
\left\lbrace(x_1, \dots, x_{J-1}) \in \mathbb{N}^{J-1} : \sum_{j = 1}^{J - 1} x_j = m, x_j < \mu_{J - j + 1} \right\rbrace.
\]
\end{theorem}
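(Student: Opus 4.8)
The plan is to deduce all three assertions from Theorem~\ref{thm: ev} by specializing $\lambda = (n-k,k)$. First I would analyze the Kostka number $m_\rho^\lambda = K_{\rho,(n-k,k)}$. A semistandard tableau of content $(n-k,k)$ uses only the entries $1$ and $2$, so its columns have length at most $2$, which forces $\rho$ to have at most two rows; writing $\rho = (n-m,m)$, the filling is forced (second row all $2$'s, first row the forced prefix of $1$'s followed by $2$'s), so $K_{(n-m,m),(n-k,k)} = 1$ for $0 \le m \le k$ and $0$ otherwise. Substituting $\rho = (n-m,m)$ into the eigenvalue formula of Theorem~\ref{thm: ev}(a) and simplifying gives $\beta_m = 1 - 2m(n+1-m)/n^2$. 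Since $r \mapsto r(n+1-r)$ is strictly increasing on $[0,(n+1)/2] \supseteq [0,k]$ (here $k \le \lfloor n/2\rfloor$), these values are pairwise distinct, so no multiplicities collapse, and by Theorem~\ref{thm: ev}(b) the eigenvalue $\beta_m$ has multiplicity $m_{(n-m,m)}^\lambda \cdot m_{(n-m,m)}^\mu = K_{(n-m,m),\mu}$.

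Next I would evaluate $K_{(n-m,m),\mu}$. Using the two-row Jacobi--Trudi identity $s_{(n-m,m)} = h_{n-m}h_m - h_{n-m+1}h_{m-1}$ together with $\langle s_\rho, h_\mu\rangle = K_{\rho,\mu}$ gives $K_{(n-m,m),\mu} = \langle h_{n-m}h_m, h_\mu\rangle - \langle h_{n-m+1}h_{m-1}, h_\mu\rangle = N_m - N_{m-1}$, where $N_r = \#\{(z_1,\dots,z_J) : 0 \le z_j \le \mu_j,\ \sum_j z_j = r\}$ counts the ways to split the column sums so that the second row of the table has size $r$. On generating functions $\sum_r (N_r - N_{r-1})q^r = (1 - q^{\mu_1+1})\prod_{j\ge 2}(1 + q + \cdots + q^{\mu_j})$, and extracting the coefficient of $q^m$ yields exactly the number of $(J-1)$-tuples with entries bounded (in some order) by $\mu_2,\dots,\mu_J$ and sum $m$, i.e.\ the set in the statement; one also reads off that $N_m - N_{m-1} > 0$ precisely for $0 \le m \le k$, matching the range of the eigenvalues.

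For the Hahn-polynomial assertion I would first record that a $2$-rowed table $T$ is determined by its second row, equivalently by the $J-1$ free coordinates $a_j := T_{2j}$ ($j < J$), with $T_{2J} = k - \sum_{j<J}a_j$ and $T_{1j} = \mu_j - a_j$; a short factorial computation identifies the push-forward of the Fisher--Yates law to $(a_1,\dots,a_{J-1})$ with the multivariate hypergeometric distribution, whose orthogonal polynomials are the multivariate Hahn polynomials. By Theorem~\ref{thm: ev}(c) the eigenfunctions of $P$ form a system of these orthogonal polynomials, so it remains only to match degrees with eigenvalues. Let $V_m$ be the space of polynomial functions on the support of total degree $\le m$ in the $a_j$'s. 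I would prove (i) $P V_m \subseteq V_m$ and (ii) $(P - \beta_m I)V_m \subseteq V_{m-1}$. Granting these, $P$ is self-adjoint with respect to Fisher--Yates (inherited from the reversibility of random transpositions under the lumping, or checked directly by detailed balance), so the orthogonal complement $U_m$ of $V_{m-1}$ in $V_m$ is $P$-invariant and $(P - \beta_m I)U_m \subseteq V_{m-1}\cap U_m = 0$, i.e.\ $P$ acts on $U_m$ as $\beta_m I$. Since $\bigcup_m V_m$ is the whole (finite-dimensional) function space and $P$ is diagonalizable, comparing dimensions forces each $U_m$ to be exactly the $\beta_m$-eigenspace; thus the degree-$m$ multivariate Hahn polynomials span the $\beta_m$-eigenspace, with $\dim U_m = K_{(n-m,m),\mu}$ consistent with the multiplicity above.

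The main obstacle is establishing (i) and (ii), i.e.\ understanding $P$ as an explicit difference operator in $a_1,\dots,a_{J-1}$. Writing a step as ``with probability proportional to $(\mu_j - a_j)a_\ell$, replace $a$ by $a + e_j - e_\ell$'' (with the $\ell = J$ and $j = J$ moves re-expressed through $T_{2J} = k - \sum_{j<J}a_j$), the transition weights are quadratic in the coordinates while each increment lowers polynomial degree by one, so a priori $P$ could raise total degree by one. The point is that the degree-$(m+1)$ contributions cancel upon summing over the moves, crucially using the relation $\sum_j a_j = k$, and that the surviving degree-$m$ part collapses to the scalar $\beta_m - 1 = -2m(n+1-m)/n^2$ times the leading form of the input. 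This is the multivariate analogue of the classical computation showing that Hahn polynomials diagonalize the Bernoulli--Laplace chain (the case $J = 2$), and I expect it to be the most calculation-heavy step; organizing it by collecting, for a monomial $a^\alpha$, the terms of $Pa^\alpha$ by total degree should make the cancellations transparent.
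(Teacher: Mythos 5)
Your high-level architecture — specialize Theorem~\ref{thm: ev}, kill $m_\rho^\lambda$ unless $\rho$ is two-rowed, then identify $m_{(n-m,m)}^\mu$ and match eigenfunctions to degree — is the same as the paper's. The Kostka computation for $\lambda = (n-k,k)$, the substitution into the eigenvalue formula, and the observation that $m \mapsto m(n+1-m)$ is injective on $[0,k]$ are all correct and match Corollary~\ref{cor: multCases}(c). Where you diverge is the computation of $m_{(n-m,m)}^\mu$: the paper (Corollary~\ref{cor: multCases}(d)) counts SSYT directly by choosing the second row, whereas you invoke Jacobi--Trudi to write $K_{(n-m,m),\mu} = N_m - N_{m-1}$ and then read off a coefficient from $(1-q)\prod_j(1 + q + \cdots + q^{\mu_j})$. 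The Jacobi--Trudi route is a genuine and arguably cleaner alternative.

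However, there are two real problems in that paragraph. First, extracting $[q^m]$ from $(1 - q^{\mu_1+1})\prod_{j\ge 2}(1 + \cdots + q^{\mu_j})$ only collapses to ``the number of $(J-1)$-tuples bounded by $\mu_2,\dots,\mu_J$ with sum $m$'' when $m \le \mu_1$; for $m > \mu_1$ the $q^{\mu_1+1}$ factor subtracts a nonzero correction, and the identification with the set in the theorem statement fails. This is exactly the hypothesis $\mu_1 \ge k$ that the paper imposes in Corollary~\ref{cor: multCases}(d) and that neither statement makes fully explicit, so you should flag it rather than assert the match unconditionally. Second, the remark that ``$N_m - N_{m-1} > 0$ precisely for $0 \le m \le k$'' is simply false: $K_{(n-m,m),\mu} > 0$ if and only if $\mu \preceq (n-m,m)$, i.e.\ $m \le n - \mu_1$, a condition that has nothing to do with $k$. (Take $\mu = (n-1,1)$ and $k = \lfloor n/2\rfloor$: only $m \in \{0,1\}$ contribute.) The theorem's range $0 \le m \le k$ comes from $m_\rho^\lambda$, not from $m_\rho^\mu$; some of those $\beta_m$ may well have zero multiplicity.

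On the eigenfunction half: your filtration argument ($V_m$, $U_m$, self-adjointness forcing $U_m$ to be the $\beta_m$-eigenspace) is the correct skeleton and is exactly how Lemma~\ref{lem: khare} is meant to be applied, and you are right to identify step (ii) — that the degree-$(m+1)$ terms in $P a^\alpha$ cancel and the surviving degree-$m$ part has the right scalar — as the essential unproved content. As written it is a plan, not a proof: you give no verification for a general monomial $a^\alpha$, only a remark that the cancellation ``should'' use $\sum_j a_j = k$. For fairness I note the paper's own Theorem in Section~\ref{sec: poly} only carries out the computation for powers $T(i,j)^m$ of a single cell and for degree-two products (Lemma~\ref{lem: secondMoments}), so you are no worse off; but since you are the one claiming a complete proof, you would need to either perform the multi-index computation or at least reduce it carefully to the one-variable case.
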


The eigenvalues and eigenvectors allow an analysis of the convergence rates of this Markov chain. The measure that we study for convergence rate is the \textit{mixing time}. That is, for a Markov chain on a space $\Omega$ with transition probability $P$ and stationary distribution $\pi$, the mixing time is defined as
\[
t_{mix}(\epsilon) = \sup_{x_0 \in \Omega} \inf \{ t > 0 : \| P^t(x_0, \cdot) - \pi(\cdot) \|_{TV} < \epsilon \}, \quad \epsilon > 0,
\]
where $\| \mu - \pi \|_{TV} = \sup_{A \subset \Omega} |\mu(A) - \pi(A)|$ is the total-variation distance between probability measures.

It is challenging to analyze the mixing time in full generality, but understanding the linear and quadratic eigenfunctions of the chain give the following results: 
\begin{itemize}
\item For $\lambda = (n - k, k), \mu = (n - \ell, \ell)$, the  time to stationarity, averaged over starting positions sampled from $\pi_{\lambda, \mu}$, is bounded above by $(n/4)\log( \min(k, \ell))$.

\item For $\lambda = (n - k, k)$ and any $\mu$, the multivariate Hahn polynomials are used to compute an upper bound for the time to stationarity starting from extreme states in which the second row has a single non-zero entry.

\item For any $\lambda, \mu$, the mixing time is bounded below by $((n/4) - 1)\log(C_{\lambda, \mu}n)$, where $C_{\lambda, \mu}$ is a computable constant. 
\end{itemize}

The mixing time of the random transpositions chain on $S_n$ is $(1/2) n \log(n) + cn$, which is an upper bound for the mixing time of the lumped chain on $\T_{\lambda, \mu}$. The results listed above suggest that there is not much of a speed-up. 

%In contrast, Section XXX(top card) is an example of a lumped chain with remarkable speed-up. Understanding when this happens for lumped chains is an open problem. 
%TTT

\subsection{Related Work}
%STill need to spiff this up, ask Persi for advice!

Note that for two way contingency tables it is easy to sample directly from the Fisher-Yates distribution (by generating a random permutation and using the argument of Lemma \ref{lem: CTcorrespondence}). One reason for interest in the Markov chain mixing time is that, for three and higher way tables, the Markov chain method is the only available approach, so it is important to see how it works in the two way case.   

The uniform swap Markov chain on contingency tables was first proposed in \cite{diaconisGangoli}, with the motivation of studying the uniform distribution \cite{DEfron}. The mixing time of this chain has been analyzed in special cases, e.g.\  if $n = \sum \lambda_i = \sum \mu_j$ 
and the number of rows and columns is fixed, then the mixing time is of order $n^2$. When the table has two rows, the chain mixes in time polynomial in the number of columns and $n$  \cite{hernek}. Chung, Graham, and Yau \cite{chungGraham} indicate that a modified
version of the chain converges in time polynomial in the number of rows and columns. 

A contingency table can be thought of as a bi-partite graph with prescribed degree sequences, and a similar swap Markov chain defined for any graph with fixed degree sequences to sample from the uniform distribution, e.g.\  \cite{amanatidis2020}. There is a large literature on methods for sampling graphs with given degree sequences (not necessarily bi-partite); see \cite{dutta2021sampling} for a recent review. Recently the chain was studied for contingency tables with entries in $\Z/q\Z$ and mixing time with cut-off proven at $c_q n^2 \log(n)$ \cite{nestoridi2020}.

%\cite{diaconisBoydXiao} mentions contingency table chain ....

A Markov chain on $\T_{\lambda, \mu}$ for which the stationary distribution is Fisher-Yates was studied in \cite{diaconisSturmfels}, motivated by studying conditional distributions. The Markov chain is created by taking the Gibbs sampler using the uniform swap chain defined in \cite{diaconisGangoli}. This gives the chain: Pick a pair of rows $i_1, i_2$ and columns $j_1, j_2$ uniformly at random. This determines a $2 \times 2$ submatrix. Replace it with a $2 \times 2$ sub-table with the same margins, chosen from the hypergeometric distribution. As noted in Section 2 of \cite{diaconisSturmfels}, while it is straightforward to sample directly from Fisher-Yates distribution for $2$-way tables, there is not a simple algorithm for sampling from the analagous distribution on $3$-way or higher-dimensional tables. Thus, Markov chains with Fisher-Yates distribution as stationary are valuable for this application; multiway tables are discussed further in Section \ref{sec: multiway}.

%Quick Google found this paper \url{https://www.tandfonline.com/doi/full/10.1016/j.akcej.2017.10.001} which uses canonical path to give upper bound of $2 c_{max}(IJ)^4 \log(c_{max})$, where $c_{max}$ is the largest possible entry in a single cell. (Though the 2018 paper only cited by 6, in some weird journal?)

%Here's another chain by Dyer that has uniform distribution, maybe I could think about how to explain that one with double cosets: %\url{https://www-sciencedirect-com.stanford.idm.oclc.org/science/article/pii/S030439759900136X?via%3Dihub}

\subsection{Outline}
 Section \ref{sec: background} contains an overview of basic definitions and facts for Markov chains, double cosets, and orthogonal polynomials. Section \ref{sec: evSection} establishes the eigenvalues and multiplicities, as well as the eigenfunctions of the Markov chain as polynomials, and gives formulas for the linear and quadratic polynomial eigenfunctions. These are then used in Section \ref{sec: mixingTime} to find explicit upper and lower bounds on the mixing time of the chain, in specific cases. Section \ref{sec: future} discusses some future directions. % 1) The use of the linear and quadratic eigenfunctions in data analysis, 2) ??? (Maybe more ??)

\paragraph{Acknowledgments} The author thanks Persi Diaconis for helpful discussion and suggestions. This work was supported by a National Defense Science \& Engineering Graduate Fellowship and a Lieberman Fellowship at Stanford University.

\section{Background} \label{sec: background}

This section reviews necessary facts and background on Markov chains, orthogonal polynomials, and the random transpositions chain on $S_n$.

 \subsection{Double Coset Markov Chains} \label{sec: lumped}

%This section reviews the results from  on lumping of Markov chains to double cosets. 
The results of this section use basic tools of representation theory; in particular, induced representations and Frobenius reciprocity. For background on these topics, see \cite{isaacs}, \cite{jamesLiebeck}, \cite{serre}. In the specific case in hand -- the symmetric group -- additional tools are available; Young's rule and the hook-length formula. These are clearly developed in \cite{jamesBook}. See also, \cite{jamesKerber}.

Let $H, K$ be subgroups of the finite group $G$, $Q$ a probability on $G$. If $\text{supp}(Q)$ is not contained in a coset of a subgroup, then the random walk on $G$ induced by $Q$ is ergodic with a uniform stationary distribution. This random walk on $G$ is defined by picking a random element from $Q$ and multiplying the current state. That is, transitions are
\[
P(x, y) = Q(yx^{-1}).
\]
See \cite{diaconisBook} for an introduction to random walks on groups.

The double cosets of $H \backslash G / K$ partition the space $G$ and any Markov chain on $G$ defines a random process on the set of double cosets by keeping track of which double coset the process on $G$ is in, giving a `lumped' process. Proposition \ref{prop: inducedChain} gives a condition for when the random walk induced by $Q$ on $G$ is also a Markov chain on the double cosets. For the statement we fix double coset representatives and write $x$ for the double coset $HxK$. The result is proven in \cite{diaconisRamSimper}.

%See [CITE L-P] for an introduction Markov chains [CITE Diaconis or SC] for random walks on groups.

\begin{proposition} \label{prop: inducedChain}
Let $Q$ be a probability on $G$ with is $H$-conjugacy invariant ($Q(s) = Q(h^{-1}s h) $ for $h \in H, s \in G$). Then, the image of the random walk driven by $Q$ on $G$ maps to a Markov chain on $H \backslash G /K$ with transition kernel
\[
P(x, y) = Q(HyKx^{-1}).
\]
The stationary distribution is $\pi(x) =  |HxK|/|G|$. If $Q(s) = Q(s^{-1})$, then $(P, \pi)$ is reversible.
\end{proposition}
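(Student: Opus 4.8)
The plan is to establish the three claims of Proposition~\ref{prop: inducedChain} in the order stated: first that the lumped process is genuinely Markov with the asserted kernel, then that its stationary distribution is $\pi(x) = |HxK|/|G|$, and finally that $\pi$-reversibility holds under the symmetry hypothesis $Q(s) = Q(s^{-1})$.

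\textbf{Step 1: the lumping is Markov.} Fix double coset representatives and, for a representative $x$, consider the random walk on $G$ started from the uniform distribution \emph{on the coset $HxK$}. The key point is that the $H$-conjugacy invariance of $Q$ should make the distribution of the double coset of the next state depend only on the double coset $HxK$, not on which element of it we started from. Concretely, I would show that for a fixed double coset $HyK$, the quantity $\sum_{g \in HyK} \sum_{s : sx_0 = g} Q(s) = Q(HyKx_0^{-1})$ is the same for every $x_0 \in HxK$. Writing $x_0 = h x k$, one computes $Q(HyK(hxk)^{-1}) = Q(HyKk^{-1}x^{-1}h^{-1})$; since $HyKk^{-1} = HyK$ (as $k \in K$), this is $Q(HyKx^{-1}h^{-1}) = Q(h \cdot HyKx^{-1}h^{-1} \cdot h^{-1})$ — here I use that $Q$ is constant on the set $HyKx^{-1}h^{-1}$ summed against, and push the $h$-conjugation through using $Q(s) = Q(h^{-1}sh)$, together with $h HyK = HyK$ — to conclude it equals $Q(HyKx^{-1})$. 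This well-definedness, combined with the Markov property of the walk on $G$ and the fact that starting the $G$-walk from a point versus from the uniform distribution on its double coset are interchangeable for tracking future double cosets, gives that the lumped process is Markov with kernel $P(x,y) = Q(HyKx^{-1})$. (This is exactly the general lumpability criterion, and is the content of the cited result in \cite{diaconisRamSimper}.)

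\textbf{Step 2: stationarity.} Let $U$ denote the uniform distribution on $G$, which is stationary for $P_G(x,y) = Q(yx^{-1})$ since $\mathrm{supp}(Q)$ is not contained in a coset of a subgroup. The push-forward of $U$ under the lumping map $g \mapsto HgK$ is precisely the measure assigning mass $|HxK|/|G|$ to the double coset of $x$. Because the lumped chain is a bona fide function of the $G$-chain (Step 1), the push-forward of a stationary measure is stationary for the lumped chain; hence $\pi(x) = |HxK|/|G|$ is stationary for $P$. Alternatively one can verify $\sum_x \pi(x) P(x,y) = \pi(y)$ directly by a counting argument, summing $|HxK| \cdot Q(HyKx^{-1})$ over double coset representatives $x$.

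\textbf{Step 3: reversibility.} Assume $Q(s) = Q(s^{-1})$. I want $\pi(x) P(x,y) = \pi(y) P(y,x)$, i.e. $|HxK| \, Q(HyKx^{-1}) = |HyK| \, Q(HxKy^{-1})$. Expanding, $Q(HyKx^{-1}) = \sum_{h\in H, k \in K} \frac{1}{|H\cap\, ?|}\cdots$ — more cleanly, count pairs: $|HxK| \cdot Q(HyKx^{-1})$ is, up to the standard stabilizer factor $|H||K|$, the total $Q$-mass of $\{ s : s \in HyK(HxK)^{-1}\} = \{ s : s = y' (x')^{-1},\ x' \in HxK,\ y' \in HyK\}$ counted with the multiplicity of representations. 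Using $Q(s) = Q(s^{-1})$ this set and multiplicity structure is symmetric under swapping the roles of $x$ and $y$ and inverting, which yields the detailed balance equation. The main obstacle here, and the one place to be careful, is bookkeeping the overcounting: each element of a double coset $HxK$ is written as $hxk$ in exactly $|S_{x}|$ ways where $S_x = \{(h,k) : hxk = x\}$, and one must confirm that the $|HxK| = |H||K|/|S_x|$ factors combine correctly with the $Q$-mass sums on both sides. I expect Step 1 (well-definedness of the lumped kernel) to be the conceptual heart, and Step 3's counting to be the most error-prone; both are handled by the careful manipulation of $H$-conjugacy invariance sketched above, and in any case the statement is quoted from \cite{diaconisRamSimper}, so I would present the argument at the level of detail needed to make the paper self-contained rather than re-deriving everything.
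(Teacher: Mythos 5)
Your Steps 1 and 2 essentially reproduce the paper's (commented-out) argument: Step 1 is the Dynkin lumpability check
\[
Q\bigl(HyK(hxk)^{-1}\bigr) = Q\bigl(HyKx^{-1}h^{-1}\bigr) = Q\bigl(h^{-1}HyKx^{-1}\bigr) = Q\bigl(HyKx^{-1}\bigr),
\]
using $K k^{-1} = K$, then $H$-conjugacy invariance summed over the set, then $h^{-1}H = H$; and Step 2 is the observation that the push-forward of the uniform (stationary) measure under a valid lumping is stationary. The one line in your Step 1 that reads ``$Q(h \cdot HyKx^{-1}h^{-1} \cdot h^{-1})$'' is miswritten (it should be $Q(h^{-1} \cdot HyKx^{-1}h^{-1} \cdot h)$, which collapses to $Q(h^{-1}HyKx^{-1})$), but the intended manipulation is correct.

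Step 3 is where you genuinely diverge, and where your proposal is incomplete. The paper's route is short: $Q(s)=Q(s^{-1})$ makes the walk on $G$ reversible with respect to the uniform measure $u$ (since $u(g)P_G(g,g')=Q(g'g^{-1})/|G|=Q(g(g')^{-1})/|G|=u(g')P_G(g',g)$), and \emph{any valid lumping of a reversible chain is reversible} with respect to the pushed-forward stationary measure. That last fact has a two-line proof using the constancy from Step 1: for representatives $x,y$,
\[
\pi(x)P(x,y)=\sum_{x'\in HxK}u(x')\sum_{y'\in HyK}P_G(x',y')=\sum_{x'\in HxK}\sum_{y'\in HyK}u(x')P_G(x',y'),
\]
which is symmetric in $x\leftrightarrow y$ by reversibility of $P_G$, hence equals $\pi(y)P(y,x)$. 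You instead attempt a direct count weighted by $|HxK|=|H||K|/|S_x|$ and $Q$-mass, which can be made to work but requires exactly the stabilizer bookkeeping you flag as error-prone and never carry out; as written it is a sketch rather than a proof. I would replace your Step 3 with the ``lumping preserves reversibility'' argument above: it is both the paper's approach and strictly easier.
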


\begin{comment}
\begin{proof}
Notice that the kernel $P$ is well-defined; that is, it is independent of the choice of double coset representatives for $x, y$. Dynkin's criteria  \cite{pang} says that the image of a Markov chain in a partitioning of the state space is Markov if and only if for any set in the partition and any point in a second set, the chance of the original chain moving from the point to the first set is constant for points in the second set. 

Fixing $x, y$, observe
\[
Q(HyK(hxk)^{-1}) = Q(HyK xh^{-1}) = Q(HyKx^{-1}).
\]
Since the chain driven by $Q$ is ergodic with uniform stationary distribution $u$, the stationary distribution of the lumped chain is $\pi(x) = u(HxK)$. Finally, any function of a reversible chain is reversible and $Q(s) = Q(s^{-1})$ gives reversibility of the walk on $G$.
\end{proof}
\end{comment}

\begin{remark}
It is special for a function of a Markov chain to also be Markov. In this setting, many of the famous Markov chains on $S_n$ are not Markov when they are lumped to the the double cosets $S_\lambda \backslash S_n / S_\mu$. For example, the adjacent transpositions random walk on $S_n$ is induced by $
Q((i, i+1)) = 1/(n-1)$, which does not satisfy the conditions of Proposition \ref{prop: inducedChain}. See \cite{pang} for a survey on lumped Markov chains.
\end{remark}

Since transpositions form a conjugacy class in $S_n$, the probability $Q$ from \eqref{eqn: Q} satisfies Proposition \ref{prop: inducedChain}, so lumped to contingency tables gives a Markov process. The following lemma also directly proves that the lumping of the random transpositions chain to contingency tables is Markov, and equivalent to Definition \ref{def: chain}. 

\begin{lemma}
The random transpositions chain on $S_n$ induced by $Q$ from \eqref{eqn: averageBound} when lumped to double cosets $S_\lambda \backslash S_n / S_\mu$ is equivalent to the Markov chain defined in Definition \ref{def: chain}.
\end{lemma}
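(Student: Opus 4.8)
The plan is to compute, for a fixed table $T \in \T_{\lambda, \mu}$, the probability that one step of the lumped chain moves to a given neighbouring table $T'$, and match it to the formula in Definition \ref{def: chain}. First I would fix a convenient permutation $x$ in the double coset $S_\lambda x S_\mu$ corresponding to $T$; by Proposition \ref{prop: inducedChain} the transition probability $P(T,T') = Q\bigl(S_\lambda T' S_\mu \, x^{-1}\bigr) = \sum_{y \in S_\lambda x' S_\mu} Q(yx^{-1})$ does not depend on which representative $x$ is chosen, where $x'$ is any representative of $T'$. Since $Q$ is supported on the identity and the transpositions, $Q(yx^{-1})$ is nonzero precisely when $y = x$ or $y = (a\,b)\,x$ for some transposition $(a\,b)$, i.e.\ when $y$ is obtained from $x$ by swapping the values in two positions. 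So the computation reduces to counting: how many transpositions $(a\,b)$ have the property that $(a\,b)x$ lies in the double coset corresponding to $T'$?

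The key combinatorial step is to translate ``$(a\,b)x$ lies in the double coset of $T'$'' into a statement about $T$ and $T'$. Using the description of the map $S_n \to \T_{\lambda,\mu}$ from Section \ref{sec: CTsubIntro} (or the permutation-matrix picture attributed to Zhihan Li), the table entry $T_{ij}$ counts positions in the $i$-th row-block of $x$ holding a value in the $j$-th column-block. Swapping the values at positions $p$ and $q$ changes the table only if $p$ and $q$ lie in different row-blocks, say blocks $i_1$ and $i_2$, and the values $x(p)$, $x(q)$ lie in different column-blocks, say $j_1$ and $j_2$; in that case the resulting table is exactly $F_{(i_1,j_1),(i_2,j_2)}(T)$, with $T_{i_1 j_1}, T_{i_2 j_2}$ decreasing by one and $T_{i_1 j_2}, T_{i_2 j_1}$ increasing by one. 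Conversely, every $T'$ adjacent to $T$ in the sense of Definition \ref{def: chain} arises this way. Hence the number of ordered pairs of positions $(p,q)$ producing the move $T \to F_{(i_1,j_1),(i_2,j_2)}(T)$ is $2\,T_{i_1 j_1} T_{i_2 j_2}$ (choose $p$ among the $T_{i_1 j_1}$ eligible positions, $q$ among the $T_{i_2 j_2}$ eligible positions, and the factor $2$ for order, matching the convention $i_1 < i_2$), each contributing weight $Q((p\,q)) = 2/n^2$; wait — I should instead note that $Q$ assigns $2/n^2$ to each unordered transposition, so summing over the $T_{i_1 j_1} T_{i_2 j_2}$ unordered pairs gives $2 T_{i_1 j_1} T_{i_2 j_2}/n^2$, exactly $P(T,T')$. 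The holding probability $P(T,T)$ then accounts for the identity move ($Q(\mathrm{id}) = 1/n$) together with all transpositions that do not change the table (both positions in the same row-block, or the two swapped values in the same column-block), and one checks this equals $1 - \sum_{i_1<i_2}\sum_{j_1<j_2} 2 T_{i_1 j_1} T_{i_2 j_2}/n^2$ by complementarity, since all probabilities sum to $1$.

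The main obstacle is purely bookkeeping: verifying carefully that the count of position-pairs yielding a fixed target table $T'$ is exactly $T_{i_1 j_1} T_{i_2 j_2}$ (unordered), and in particular that distinct unordered pairs of positions never land in the same double coset in a way that would inflate the count, and that the well-definedness from Proposition \ref{prop: inducedChain} lets us do the count with any single representative $x$. Once the dictionary between ``swap two positions in $x$'' and ``add a $\pm 1$ submatrix to $T$'' is set up cleanly via the permutation-matrix description, the identification of $P(T,T')$ with Definition \ref{def: chain} is immediate, and reversibility plus the Fisher--Yates stationary distribution follow from the corresponding clauses of Proposition \ref{prop: inducedChain} together with $Q(s) = Q(s^{-1})$ and $\pi(T) = |S_\lambda x S_\mu|/n!$, which is the Fisher--Yates weight by Lemma \ref{lem: CTcorrespondence}.
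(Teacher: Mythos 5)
Your proposal is correct and follows essentially the same approach as the paper: describe the map $x \mapsto T^x$, observe that a transposition $(a\,b)$ swaps the block memberships of exactly two entries and hence maps $T^x$ to $F_{(i_1,j_1),(i_2,j_2)}(T^x)$ (or fixes $T^x$ if the two positions share a row-block or the two values share a column-block), and invoke Dynkin-style well-definedness of the lumping. Where you and the paper differ slightly: the paper's proof stops at establishing the combinatorial correspondence and checking that $\sum_{y'\in S_\lambda y S_\mu} Q(y'x^{-1})$ is independent of the representative $x$, whereas you go on to actually count the $T_{i_1 j_1} T_{i_2 j_2}$ unordered position-pairs and verify the formula $P(T,T')=2T_{i_1 j_1} T_{i_2 j_2}/n^2$ against Definition \ref{def: chain}; that extra step is a useful bit of bookkeeping the paper leaves implicit, and your route through Proposition \ref{prop: inducedChain} for well-definedness is the cleaner way to justify it.
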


\begin{proof}

Suppose $\lambda = (\lambda_1, \dots, \lambda_I), \mu = (\mu_1, \dots, \mu_J)$. Define $L_1 = \{1, \dots, \lambda_1 \}, L_2 = \{\lambda_1 + 1, \dots, \lambda_1 + \lambda_2 \}, \dots L_I = \{n - \lambda_I, \dots, n-1, n \}$ and similarly use $\mu$ to define $M_1, \dots, M_J$. Then the contingency table corresponding to the double coset of $x \in S_n$ is defined by
\[
T^x = \{T_{ij}^x \}, \quad T_{ij}^x = \# \{ \ell \in L_i : x(\ell) \in M_j \}.
\]

Let $(ab)$ be a transposition in $S_n$. If $y = (ab)x$ then $y(a) = x(b), y(b) = x(a)$ and $y(c) = x(c), c \neq a, b$. Suppose $a \in L_{i_1}, b \in L_{i_2}, x(a) \in M_{j_1}, x(b) \in M_{j_2}$. If $i_1 = i_2$ or $j_1 = j_2$ then $y \in S_\lambda x S_\mu$. Otherwise, $T^y = F_{(i_1, j_1), (i_2, j_2)}(T^x)$, as defined in Definition \ref{def: chain}.

Note that if $x, x' \in S_n$ are contained in the same double coset, i.e.\ $T^x = T^{x'}$, then  for any $y \in S_n$
\[
\sum_{y' \in S_\lambda y S_\mu} Q(y' x^{-1}) = \sum_{y' \in S_\lambda y S_\mu} Q(y' (x')^{-1}).
\] 
In words, for the random transpositions chain on $S_n$, the probability of transitioning from the double coset $S_\lambda x S_\mu$ to another double coset $S_\lambda y S_\mu$ doesn't depend on the choice of double coset representative. This is Dynkin's condition for lumped Markov chains (\cite{levinPeres}, \cite{pang}).

%multiplication on the left by (ab) swaps the labels at positions a and b
\end{proof}

\begin{comment}
Throughout this section, suppose $(X_t)_{t \ge 0}$ is a reversible discrete-time Markov chain on a finite state space $\Omega$ with transition matrix $P$ and stationary distribution $\pi$. First, the defining lemma for a \emph{lumped} Markov chain \cite{levinPeres}.

\begin{lemma}[Dynkin's Condition] \label{lem: dynkin}
Suppose $\sim$ is an equivalence relation on $\Omega$ with equivalence classes $\tilde{\Omega} = \{[x] : x \in \Omega \}$. Assume $P$ satisfies
\[
P(x, [y]) := \sum_{y' \sim y} P(x, y) = P(x', [y])
\]
whenever $x \sim x'$. Then $[X_t]$ is a Markov chain on the space $\tilde{\Omega}$ and transition matrix $\tilde{P}([x], [y]) := P(x, [y])$. 
\end{lemma}

The stationary distribution of the lumped chain is $\tilde{\pi}([x]) = \sum_{x' \sim x} \pi(x)$. It is not hard to check that the random transpositions chain on $S_n$ satisfies the condition from Lemma \ref{lem: dynkin} for contingency tables, and the resulting process is exactly Definition \ref{def: chain}. Note that not every chain of interest on $S_n$ would satisfy the condition. For example, adjacent transpositions on $S_n$ fails. 

\end{comment}

\paragraph{Eigenvalues} Let $1 = \beta_0 > \beta_1 \ge \hdots \ge \beta_{|\Omega| - 1} \ge -1$ be the eigenvalues of $P$ with corresponding eigenfunctions $f_0, f_1, \dots, f_{|\Omega| -1}$. That is, $f_i: \Omega \to \R$ and
\[
\E[f_i(X_1) \mid X_0 = x] = \beta_i \cdot f_i(x), \,\,\,\,\, x \in \Omega.
\]
For reversible Markov chains, the eigenfunctions $\{f_i\}_{i \ge 0}$ can be chosen to be \emph{orthonormal} with respect to the stationary distribution:
\[
\sum_{x \in \Omega} f_i(x) \cdot f_j(x) \cdot \pi(x) = \textbf{1}(i = j). 
\]
The eigenvalues and eigenfunctions give an exact formula for the \emph{chi-squared} distance between the chain and the stationary distribution, defined
\[
\chi^2_x(t) := \sum_{y \in \Omega} \frac{\left| P^t(x, y) - \pi(y) \right|^2}{\pi(y)}.
\] 
The chi-squared distance then gives an upper bound for the total variation distance. This information is summarized in the following lemma, see \cite{levinPeres} Chapter 12.

\begin{lemma} \label{lem: evChiSquare}
For any $x \in \Omega$ and $t > 0$, 
\[
\|P^t(x, \cdot) - \pi(\cdot) \|_{TV}^2 \le \frac{1}{4}\chi^2_x(t) = \frac{1}{4}\cdot \sum_{i=1}^{|\Omega| - 1} \beta_i^{2t} \cdot f_i^2(x).
\]
\end{lemma}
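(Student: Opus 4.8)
The statement to prove is Lemma~\ref{lem: evChiSquare}, the standard spectral bound on total-variation distance. Here is how I would organize the argument.

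\textbf{Plan.} The proof proceeds in two independent pieces: first relate total-variation distance to the chi-squared distance, then evaluate the chi-squared distance via the spectral decomposition of the reversible transition matrix $P$. For the first piece I would apply Cauchy--Schwarz: writing $\|P^t(x,\cdot) - \pi(\cdot)\|_{TV} = \frac{1}{2}\sum_{y \in \Omega} |P^t(x,y) - \pi(y)|$, insert $\sqrt{\pi(y)}/\sqrt{\pi(y)}$ into each summand and apply Cauchy--Schwarz to the pair of sequences $\big(|P^t(x,y)-\pi(y)|/\sqrt{\pi(y)}\big)_y$ and $\big(\sqrt{\pi(y)}\big)_y$, using $\sum_y \pi(y) = 1$. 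This yields $\|P^t(x,\cdot)-\pi(\cdot)\|_{TV}^2 \le \frac14 \chi^2_x(t)$, which requires $\pi(y) > 0$ for all $y$ — automatic here since the chain is irreducible on a finite space.

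\textbf{Spectral evaluation.} For the second piece I would use reversibility of $(P,\pi)$ to diagonalize $P$ in $L^2(\pi)$: there is an orthonormal basis $f_0 \equiv 1, f_1, \dots, f_{|\Omega|-1}$ of eigenfunctions with eigenvalues $\beta_i$, and the standard expansion
\[
\frac{P^t(x,y)}{\pi(y)} = \sum_{i=0}^{|\Omega|-1} \beta_i^{t}\, f_i(x) f_i(y).
\]
Since $\beta_0 = 1$ and $f_0 \equiv 1$, the $i=0$ term is exactly $1 = \pi(y)/\pi(y)$, so subtracting $\pi(y)$ removes precisely that term. Then
\[
\chi^2_x(t) = \sum_{y}\pi(y)\left(\frac{P^t(x,y)-\pi(y)}{\pi(y)}\right)^2 = \sum_{y}\pi(y)\left(\sum_{i=1}^{|\Omega|-1}\beta_i^{t} f_i(x) f_i(y)\right)^2,
\]
and expanding the square and using $\sum_y \pi(y) f_i(y) f_j(y) = \mathbf{1}(i=j)$ collapses the double sum to $\sum_{i=1}^{|\Omega|-1}\beta_i^{2t} f_i^2(x)$. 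Combining the two pieces gives the claim.

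\textbf{Main obstacle.} There is no real obstacle — this is a textbook result (as the reference to \cite{levinPeres} Chapter~12 signals), and a citation may suffice in place of a full proof. The only point requiring a word of care is justifying the eigenfunction expansion of $P^t(x,y)/\pi(y)$: this is the spectral theorem applied to the self-adjoint operator $P$ on the finite-dimensional inner-product space $L^2(\pi)$, where self-adjointness is exactly the reversibility condition $\pi(x)P(x,y) = \pi(y)P(y,x)$. Everything else is Cauchy--Schwarz and orthonormality bookkeeping, so I would keep the writeup short and, if anything, emphasize only the step where the $\beta_0$ term cancels against $\pi(y)$.
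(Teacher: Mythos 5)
Your proof is correct and is exactly the standard argument that the paper points to by citing Chapter~12 of Levin--Peres (the paper does not reproduce a proof, so there is no separate "paper's approach" to diverge from). Both halves — the Cauchy--Schwarz bound $\|P^t(x,\cdot)-\pi\|_{TV}^2 \le \tfrac14\chi^2_x(t)$ and the spectral expansion $P^t(x,y)/\pi(y) = \sum_i \beta_i^t f_i(x)f_i(y)$ with the $i=0$ term cancelling $\pi(y)$ — are the textbook route, and your bookkeeping with orthonormality in $L^2(\pi)$ is accurate.
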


Note that the worst-case mixing time is defined by looking at the total variation distance from the worst-case starting point. Thus to bound the mixing time it is needed to be able to analyze $f_i^2(x)$ for any $x \in \Omega$. Even when the eigenfunctions are explicitly known, this can be challenging. For chains $P(x,y)$ on $\Omega$ where a group $G$ acts transitively (for all $x, y \in \Omega, g \in G$, $P(x,y)= P(gx,gy)$), the distance from all starting states is the same and the right hand side of the bound in Lemma \ref{lem: evChiSquare}  is $\sum_{i} \beta_i^{2t}$. Another bound, for any starting state $x$, is 
\begin{equation} 
    4 \|P_x^\ell - \pi \|_{TV}^2 \le \frac{1}{\pi(x)} \beta_{*}^{2 \ell},
\end{equation}
with $\beta_* = \max_{\lambda \neq 1} |\beta_\lambda|$. If $\pi(x)$ is not uniform this bound can vary widely, as in the following example. 

\begin{example} \label{ex: transvections}
%TTT
The paper \cite{diaconisRamSimper} analyzes the random transvections chain on $GL_n(\mathbb{F}_q)$ lumped to double cosets $\mathcal{B} \backslash GL_n(\mathbb{F}_q) / \mathcal{B}$, with $\mathcal{B}$ the Borel subgroup of upper triangular matrices. Here double cosets are indexed by permutations in $S_n$, and the uniform distribution on $GL_n(\mathbb{F}_q)$ induces the Mallows distribution on $S_n$ with parameter $q$:
 \[
p_q(\omega) = \frac{q^{I(\omega)}}{[n]_q!}, \quad [n]_q! := (1 + q)(1 + q + q^2)\hdots (1 + q + \hdots + q^{n-1}),
 \]
 where $I(\omega)$ is the number of inversions in $\omega$. In the setting $q > 1$, the reversal permutation has largest probability and the identity has smallest. Transvections form a conjugacy class in $GL_n(\mathbb{F}_q)$, and so character theory gives the eigenvalues and the mixing time from special starting states can be analyzed. Starting from the identity the mixing time is order $n$ (the same order as the random transvections walk on $GL_n(\mathbb{F}_q)$), but starting from the reversal permutation the mixing time is order $\log(n)$.
 %Hopefully that's correct ... 
 \end{example}

 The analysis in Example \ref{ex: transvections} is amenable because, in the setting of Proposition \ref{prop: inducedChain}, the measure $Q$ was a class function, i.e.\ $Q(s) = Q(t^{-1}st)$ for all $s, t, \in G$. In this case, the eigenvalues of the walk on $G$ are expressed using the characters of the irreducible complex representations of $G$. Let $\widehat{G}$ be an index set for these representations, $\lambda \in \widehat{G}$, and $\chi_\lambda$ the character of $\lambda$. The restriction of $\chi_\lambda$ to $H$ is written $\chi_\lambda|_H$ and $\left\langle \chi_\lambda |_H, 1 \right\rangle$ is the number of times the trivial representation of $H$ appears in $\chi_\lambda|_H$. By reciprocity, this is $\left\langle \chi_\lambda, \mathrm{Ind}_H^G(1) \right\rangle$. 
 
 The following theorem is proven in \cite{diaconisRamSimper}.

\begin{proposition} \label{prob: DCev}
Let $Q$ be a class function on $G$ and let $H, K$ be subgroups of $G$. The induced chain $P(x, y)$ of Proposition \ref{prop: inducedChain} on $H \backslash G / K$ has eigenvalues
\begin{equation}
    \beta_\lambda = \frac{1}{\chi_\lambda(1)} \sum_{s \in G} Q(s), \quad \lambda \in \widehat{G}
\end{equation}
with multiplicity
\begin{equation}
m_\lambda = \left\langle \chi_\lambda|_H, 1 \right\rangle \cdot \left\langle \chi_\lambda|_K, 1 \right\rangle.
\end{equation}
Further, the average square chi-squared distance to stationarity satisfies
\begin{equation} \label{eqn: averageBound}
    \sum_{x\in \Omega} \pi(x) \chi^2_x(\ell) = \frac{1}{4} \sum_{\lambda \in \widehat{G}, \lambda \neq 1} m_\lambda \beta_\lambda^{2 \ell}.
\end{equation}
\end{proposition}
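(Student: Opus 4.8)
The plan is to realize the lumped chain as the restriction of the $Q$-random walk on $G$ to the subspace of bi-invariant functions, and then diagonalize that restriction using the Peter--Weyl decomposition of $\ell^2(G)$.

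First I would identify $\ell^2(H \backslash G / K)$ with the subspace $\mathcal{B} \subseteq \ell^2(G)$ of functions $\tilde f$ that are invariant under left translation by $H$ and right translation by $K$, via $\tilde f(x) = f(HxK)$. Writing $P_G$ for the transition operator of the walk on $G$, $(P_G g)(x) = \sum_{s \in G} Q(s)\, g(sx)$, a direct computation --- grouping $s$ according to the double coset of $sx$ and using $Q(HyKx^{-1}) = \sum_{s \in HyKx^{-1}} Q(s)$ together with Proposition \ref{prop: inducedChain} --- shows $P_G \tilde f = \widetilde{Pf}$. Thus under this identification the lumped operator $P$ is exactly $P_G$ restricted to $\mathcal{B}$. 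Because $Q$ is a class function, $P_G$ commutes with all left translations, and it always commutes with all right translations, so it preserves $\mathcal{B}$ (which re-derives that the lumping is Markov) and we may study its spectrum there.

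Next I would decompose $\ell^2(G) \cong \bigoplus_{\lambda \in \widehat{G}} V_\lambda \otimes V_\lambda^*$ (Peter--Weyl/Artin--Wedderburn), with $G$ acting by left translation on the first factor and right translation on the second. Since $Q$ is a class function, Schur's lemma (Fourier inversion for class functions) gives that $P_G$ acts on the block $V_\lambda \otimes V_\lambda^*$ as the scalar $\beta_\lambda = \widehat{Q}(\lambda) = \chi_\lambda(1)^{-1}\sum_{s \in G} Q(s)\chi_\lambda(s)$. Restricting to $\mathcal{B}$, the $\lambda$-block contributes precisely $(V_\lambda)^H \otimes (V_\lambda^*)^K$, of dimension $\langle \chi_\lambda|_H, 1 \rangle \cdot \langle \chi_\lambda|_K, 1 \rangle = m_\lambda$ by Frobenius reciprocity. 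Hence $P$ has eigenvalue $\beta_\lambda$ with multiplicity $m_\lambda$. For the averaged chi-squared bound, I would invoke Lemma \ref{lem: evChiSquare}: with a $\pi$-orthonormal eigenbasis $\{f_i\}$, $\chi^2_x(\ell) = \sum_{i \ge 1} \beta_i^{2\ell} f_i(x)^2$; averaging over $x$ with weights $\pi(x)$ and using $\sum_x \pi(x) f_i(x)^2 = 1$ collapses this to $\sum_{i \ge 1}\beta_i^{2\ell}$, and regrouping by eigenvalue yields $\sum_{\lambda \neq 1} m_\lambda \beta_\lambda^{2\ell}$, which is \eqref{eqn: averageBound} up to the stated normalization.

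I expect the main obstacle to be the representation-theoretic bookkeeping in the multiplicity step: verifying that the intersection of $\mathcal{B}$ with the $\lambda$-isotypic component of $\ell^2(G)$ has dimension exactly $\langle \chi_\lambda|_H, 1\rangle\langle \chi_\lambda|_K,1\rangle$ --- keeping straight which tensor factor the left and right actions act on, and that $\dim (V_\lambda^*)^K = \dim (V_\lambda)^K$. A secondary point is reversibility: to apply Lemma \ref{lem: evChiSquare} with a genuine real $\pi$-orthonormal eigenbasis one uses the symmetry $Q(s) = Q(s^{-1})$ from Proposition \ref{prop: inducedChain}; without it the last display should be phrased via singular values of $P$ instead of powers of $\beta_\lambda$.
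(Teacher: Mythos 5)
The paper does not actually prove Proposition \ref{prob: DCev}; it cites it directly from \cite{diaconisRamSimper}, and the argument in that reference is precisely the Fourier-analytic/Peter--Weyl argument you give. Your outline is correct and complete in all essentials: identifying $\ell^2(H\backslash G/K)$ with the bi-invariant subspace $\mathcal{B}$, checking that the convolution operator for $Q$ both preserves $\mathcal{B}$ (because $Q$ is $H$-conjugation invariant, in fact a class function) and intertwines with the lumped $P$, diagonalizing via $\ell^2(G)\cong\bigoplus_\lambda V_\lambda\otimes V_\lambda^*$ with Schur's lemma giving the scalar $\beta_\lambda$ on each block, and computing $\dim\bigl((V_\lambda)^H\otimes(V_\lambda^*)^K\bigr)=\langle\chi_\lambda|_H,1\rangle\langle\chi_\lambda|_K,1\rangle$ by Frobenius reciprocity. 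You are also right to flag that a real $\pi$-orthonormal eigenbasis for the averaged chi-squared identity requires $Q(s)=Q(s^{-1})$, which holds here since $Q$ is constant on conjugacy classes and a class is closed under inverses whenever it is (as for transpositions); in general one would need to say this.

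Two small remarks on the statement itself rather than your proof. First, the eigenvalue formula as printed is missing the character: it should read $\beta_\lambda = \chi_\lambda(1)^{-1}\sum_{s\in G} Q(s)\chi_\lambda(s)$, exactly as you wrote it; as printed it would give $\beta_\lambda = 1/\chi_\lambda(1)$, which contradicts the random-transpositions formula in Theorem \ref{thm: ev}(a). Second, the factor $\tfrac14$ in \eqref{eqn: averageBound} is spurious: by Lemma \ref{lem: evChiSquare} one has $\chi^2_x(\ell)=\sum_{i\ge 1}\beta_i^{2\ell}f_i(x)^2$ (the $\tfrac14$ only enters when bounding total variation by chi-squared), so $\pi$-averaging gives $\sum_{\lambda\ne 1} m_\lambda\beta_\lambda^{2\ell}$ with no prefactor, as your derivation produces and as Lemma \ref{lem: avg} uses in practice. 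Your phrase ``up to the stated normalization'' implicitly catches this, but it is worth making explicit.
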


 \begin{comment}
\begin{lemma} \label{lem: lumpedEV}
Suppose $P$ is a Markov chain on $\Omega$ and $\tilde{P}$ is the lumped chain on equivalence classes $\tilde{\Omega} = \{[x]: x \in \Omega \}$. If $f: \Omega \to \R$ is an eigenfunction of $P$ with eigenvalue $\lambda$ which is \emph{constant on equivalence classes} then the natural projection $\tilde{f}: \tilde{\Omega} \to \R$, defined $\tilde{f}([x]) = f(x)$, is an eigenfunction of $\tilde{P}$ with eigenvalue $\lambda$.

Conversely, if $g: \tilde{\Omega} \to \R$ is an eigenfunction of $\tilde{P}$ with eigenvalue $\lambda$, then its lift $\check{g}: \Omega \to \R$, defined $\check{g}(x) = g([x])$, is an eigenfunction of $P$ with eigenvalue $\lambda$. 
 \end{lemma}

%Lemma 12.8

\end{comment}

\subsection{Random Transpositions on $S_n$}

The driving Markov chain on $S_n$ for contingency tables is the random transpositions Markov chain studied in \cite{diaconisShahshahani1981}. This uses the tools of representation theory and the character theory of the symmetric group. An expository account, aimed at probabilists and statisticians is in Chapter 3D in \cite{diaconisBook}. One of the main results used below is an explicit determination of the eigenvalues of this chain.

Recall the probability measure which defines the random walk on $S_n$:
\begin{equation*} 
Q(\sigma) = \begin{cases}
\frac{2}{n^2} & \text{if} \,\,\,\, \sigma = (i, j), i < j \\
\frac{1}{n} & \text{if} \,\,\,\, \sigma = id, \\
0 & \text{otherwise},
\end{cases}.
\end{equation*}
%I think problem is this is not concentration on a single conjugacy class! The identity messes it up! 
This measure $Q$ is not concentrated on a single conjugacy class (the identity is not in the conjugacy class of transpositions). However, we can write
\[
Q = \frac{1}{n}I + \frac{n - 1}{n}\widetilde{Q},
\]
where $I(\sigma) = \mathbf{1}(\sigma = id)$ and $\widetilde{Q} = \frac{1}{\binom{n}{2}} \mathbf{1}(\sigma \in \mathcal{C})$, where $\mathcal{C} \subset S_n$ denotes the conjugacy class of transpositions. (This $\widetilde{Q}$ is the same as the one discussed in Remark \ref{remark: holding}.) Then $\widetilde{Q}$ is concentration on a single conjugacy class $\mathcal{C}$, so the eigenvalues of the random walk are equal to the character ratio,
\[
\widetilde{\beta}_\rho = \frac{1}{\chi_\rho(1)} \sum_{s \in G} Q(s) = \frac{\chi_\rho(\mathcal{C})}{\chi_\rho(1)},
\]
for each partition $\rho$ of $n$.. The formula for this character ratio was determined by Frobenius; see \cite{macdonald} for a modern exposition, and \cite{ingram} for a proof of this special case:
\[
\widetilde{\beta}_\rho = \frac{1}{n(n-1)} \sum_{j = 1}^k \left[ \rho_j^2 - (2j - 1) \rho_j \right].
\]
The eigenvalues for the walk driven by $Q$ are then related:
\[
\beta_\rho =  \frac{1}{n} + \frac{n - 1}{n}\widetilde{\beta}_\rho.
\]
This information is summarized in the following lemma; see \cite{diaconisShahshahani1981} for more details. %Ehh, I guess this is fine
%TTT

\begin{lemma}[\cite{diaconisShahshahani1981}, Corollary 1 \& Lemma 7] \label{lem: RTSn}
If $P$ is the random transpositions chain on $S_n$ driven by $Q$, then $P$ has an eigenvalue $\beta_\rho$ for each partition $\rho = (\rho_1, \rho_2, \dots, \rho_k)$ of $n$. The eigenvalue corresponding to $\rho$ is
\[
\beta_\rho = \frac{1}{n} + \frac{1}{n^2} \sum_{j = 1}^k \left[ \rho_j^2 - (2j - 1) \rho_j \right].
\]

%The multiplicity of $\beta_\rho$ is $d_\rho^2$, where
% \[
%d_\rho = ???
%\]
% is the \emph{hook-length} of the partition $\rho$. 
\end{lemma}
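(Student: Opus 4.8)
The final statement to prove is Lemma~\ref{lem: RTSn}, which asserts that the random transpositions chain on $S_n$ driven by $Q$ from \eqref{eqn: Q} has, for each partition $\rho$ of $n$, an eigenvalue
\[
\beta_\rho = \frac{1}{n} + \frac{1}{n^2}\sum_{j=1}^k\left[\rho_j^2 - (2j-1)\rho_j\right].
\]

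\textbf{Proof proposal.} The plan is to reduce the claim to the classical Frobenius/Diaconis--Shahshahani computation via the decomposition of $Q$ already exhibited in the text, namely $Q = \frac{1}{n}I + \frac{n-1}{n}\widetilde{Q}$ with $\widetilde{Q}$ the uniform measure on the conjugacy class $\mathcal{C}$ of transpositions. First I would recall the general principle (see \cite{diaconisBook}, Chapter 3D): for a random walk on a finite group $G$ driven by a measure supported on a single conjugacy class $C$, the eigenvalues of the transition operator are indexed by the irreducible representations $\rho \in \widehat{G}$ and equal the character ratio $\chi_\rho(C)/\chi_\rho(1)$, each occurring with multiplicity $\chi_\rho(1)^2$. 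This follows because the convolution operator by a class function acts as a scalar on each isotypic component, and that scalar is computed by taking the trace. Applying this to $\widetilde{Q}$ gives eigenvalues $\widetilde{\beta}_\rho = \chi_\rho(\mathcal{C})/\chi_\rho(1)$.

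Next I would invoke the Frobenius formula for the character ratio at a transposition, which states
\[
\frac{\chi_\rho(\mathcal{C})}{\chi_\rho(1)} = \frac{1}{n(n-1)}\sum_{j=1}^k\left[\rho_j^2 - (2j-1)\rho_j\right];
\]
this is exactly the content cited from \cite{macdonald} and \cite{ingram} in the excerpt, so I would take it as given rather than reprove it. (If one wanted a self-contained argument, the standard route is to note that this quantity equals $\binom{n}{2}^{-1}$ times the eigenvalue of the sum-of-transpositions element $\sum_{i<j}(ij)$ in the center of the group algebra acting on the Specht module $S^\rho$, which in turn is the content function sum $\sum_{\text{cells } c}\mathrm{content}(c)$; converting the content sum over the Young diagram of $\rho$ to the stated row-by-row expression is a short combinatorial identity.)

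Finally I would combine the pieces. Since $I$ acts as the identity operator (eigenvalue $1$ on every irreducible component) and convolution is linear, the operator driven by $Q$ has, on the $\rho$-isotypic component, eigenvalue
\[
\beta_\rho = \frac{1}{n}\cdot 1 + \frac{n-1}{n}\cdot\widetilde{\beta}_\rho
= \frac{1}{n} + \frac{n-1}{n}\cdot\frac{1}{n(n-1)}\sum_{j=1}^k\left[\rho_j^2-(2j-1)\rho_j\right]
= \frac{1}{n} + \frac{1}{n^2}\sum_{j=1}^k\left[\rho_j^2-(2j-1)\rho_j\right],
\]
which is the claimed formula; the multiplicity is unchanged at $\chi_\rho(1)^2$ since $I$ and $\widetilde{Q}$ are simultaneously diagonalized by the isotypic decomposition. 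I expect no genuine obstacle here: the only nontrivial input is the Frobenius character-ratio formula, which the paper explicitly cites to the literature, so the proof is essentially bookkeeping around the $Q = \frac1n I + \frac{n-1}{n}\widetilde Q$ splitting. The one point requiring a line of care is confirming that the identity-plus-class-function decomposition really does produce a convex combination of the respective eigenvalues on each isotypic block — but this is immediate from the fact that both summands lie in the center of $\mathbb{C}[S_n]$ and hence act as scalars on each irreducible component simultaneously.
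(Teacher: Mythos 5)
Your proposal is correct and follows exactly the paper's own route: decompose $Q = \tfrac{1}{n}I + \tfrac{n-1}{n}\widetilde{Q}$, apply the class-function eigenvalue principle and Frobenius character-ratio formula to $\widetilde{Q}$, then combine linearly using the fact that both summands are central in $\C[S_n]$. Your extra remarks on Schur's lemma, the content-sum interpretation, and simultaneous diagonalization are supplementary rather than different in substance.
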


%\[
%\beta_\rho = \frac{1}{n} + \frac{1}{n^2} \sum_{j = 1}^k %\left[ (\rho_j - j)(\rho_j - j + 1) - j(j-1) \right]
%\]

\begin{comment}
\begin{remark}
When the random transpositions chain is defined without holding, with $Q((ij)) = 1/\binom{n}{2}$, the eigenvalues are slightly different \textcolor{red}{????}
\end{remark}
\end{comment}

In \cite{diaconisShahshahani1981}, the chain is proven to mix in total variation distance, with cut-off, after $(n/2) \log(n)$. This gives an initial upper bound on the mixing time of the random transpositions chain on contingency tables. 

%So we know eigenvalues of the contingency table are some subset of these. Which partitions show up, and with what multiplicity? What are the eigenfunctions?

%This has some survey of history and citations: https://www.birs.ca/workshops/2021/21w5511/files/Lucas%20Teyssier/Permutations_and_Probability.pdf

\subsection{Orthogonal Polynomials}

%Follow Khare paper*
This section reviews orthogonal polynomials and records the formula for multivariate Hahn polynomials. See \cite{dunklXuBook} for a thorough exposition on multivariate orthogonal polynomials; especially Section 3 for sufficient conditions for an orthonormal basis to exists for a probability distribution. 

Let $\pi$ be a probability distribution on a finite space $\Omega \subset \N^d$. Let $\ell^2(\pi)$ be the space of functions $f: \Omega \to \R$ with the inner product
\[
\left\langle f, g \right\rangle_\pi = \E_\pi[f(X) g(X)] = \sum_{x \in \Omega} f(x)g(x) \pi(x).
\]

 A set of functions $\{q_m\}_{0 \le m < |\Omega|}$ are \emph{orthogonal} in $\ell^2(\pi)$ if
\[
\left\langle q_m, q_\ell \right\rangle_\pi = d_m^2 \textbf{1}(m = \ell).
\]

%here exists a basis of $\ell^2(\pi)$ of \emph{orthogonal polynomials.}

For the following lemma, $\mathbf{m} = (m_1, \dots, m_{N})$ denotes an index vector and $|\mathbf{m}| = \sum m_i$ is the total degree of the polynomial defined by the vector. 
%SOS hmm, check this, Persi was uncertain, and I am too
\begin{lemma}[\cite{khare}, Lemma 3.2] \label{lem: khare}
 Suppose $\pi$ is a distribution on the space $\Omega \subset \mathbb{N}^N$, for some $N$, and $\{q_{\mathbf{m}}\}$ is an orthogonal basis of $\ell^2(\pi)$, where $q_{\mathbf{m}}$ is a polynomial of exact degree $|\mathbf{m}|$. Let $(X_t)_{t \ge 0}$ be a reversible Markov chain with transition matrix $P$ and stationary distribution $\pi$. Suppose that, 
\[
\E \left[ X_1^{\mathbf{m}} \mid X_0 = \mx \right] = \beta_{|\mathbf{m}|} x^{\mathbf{m}} + \left( \text{terms in x of degree} \,\,\, < |\mathbf{m}| \right).
\]
Then $P$ has eigenvalue $\beta_{|\mathbf{m}|}$ with eigenbasis $\{q_{\mathbf{m}} \}_{|\mathbf{m}| = m}$.

Write $\beta_m = \beta_{|\mathbf{m}|}$ for $|\mathbf{m}| = m$. The chi-square distance between $P^t(\mx, \cdot)$ and $\pi$ is
\[
\chi^2_\mx(t) = \sum_{m \ge 1} \beta_m^{2 t} \cdot h_m(\mx, \mx),
\]
where
\[
h_m(\mx, \my) := \sum_{|\mm| = m} q_{\mm}(\mx) q_{\mm}(\my) \left\langle q_{\mm}, q_{\mm} \right\rangle_{\pi}^{-1}
\]
is called the kernel polynomial of degree $m$.
\end{lemma}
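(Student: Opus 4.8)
The plan is to combine the degree filtration of polynomials with the self-adjointness of $P$ on $\ell^2(\pi)$. Write $V_m \subseteq \ell^2(\pi)$ for the span of (restrictions to $\Omega$ of) polynomials of total degree at most $m$, so $V_0 \subseteq V_1 \subseteq \cdots$ and $\bigcup_m V_m = \ell^2(\pi)$, and set $W_m := V_m \cap V_{m-1}^\perp$, the orthogonal complement of $V_{m-1}$ in $V_m$ with respect to $\langle\cdot,\cdot\rangle_\pi$ (with $V_{-1} := \{0\}$). By the defining graded structure of an orthogonal polynomial basis, the $q_{\mathbf m}$ with $|\mathbf m| \le k$ form a basis of $V_k$; in particular, for $|\mathbf m| = m$ the polynomial $q_{\mathbf m}$ lies in $V_m$ and is orthogonal to every $q_{\mathbf m'}$ with $|\mathbf m'| \le m-1$, hence to all of $V_{m-1}$, so $\{q_{\mathbf m} : |\mathbf m| = m\}$ is an orthogonal basis of $W_m$.

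First I would check that $P$ respects the filtration. The hypothesis says $P x^{\mathbf k} = \E[X_1^{\mathbf k}\mid X_0 = \mx] = \beta_{|\mathbf k|}x^{\mathbf k} + (\text{degree} < |\mathbf k|)$ is a polynomial of degree at most $|\mathbf k|$; since monomials of degree $\le m$ span $V_m$, this gives $PV_m \subseteq V_m$ for every $m$. Reversibility makes $P$ self-adjoint on $\ell^2(\pi)$, and a self-adjoint operator that preserves a subspace preserves its orthogonal complement: if $v \in V_{m-1}^\perp$ and $u \in V_{m-1}$ then $\langle Pv, u\rangle_\pi = \langle v, Pu\rangle_\pi = 0$ since $Pu \in V_{m-1}$. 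Hence $P$ preserves $V_{m-1}^\perp$, and therefore $PW_m \subseteq W_m$.

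Next I would pin down the action of $P$ on $W_m$. Fix $|\mathbf m| = m$ and expand $q_{\mathbf m} = \sum_{|\mathbf k| = m} c_{\mathbf k}\, x^{\mathbf k} + (\text{degree} < m)$; applying the hypothesis term by term gives $P q_{\mathbf m} = \beta_m \sum_{|\mathbf k| = m} c_{\mathbf k}\,x^{\mathbf k} + (\text{degree} < m) = \beta_m q_{\mathbf m} + r_{\mathbf m}$ with $r_{\mathbf m} \in V_{m-1}$. But $Pq_{\mathbf m} \in W_m$ and $\beta_m q_{\mathbf m} \in W_m$, so $r_{\mathbf m} \in W_m \cap V_{m-1} = \{0\}$, i.e.\ $P q_{\mathbf m} = \beta_m q_{\mathbf m}$. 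Thus for each $m$ the functions $\{q_{\mathbf m}: |\mathbf m| = m\}$ form an eigenbasis of $W_m$ with eigenvalue $\beta_m$, and since $\{q_{\mathbf m}\}_{\mathbf m}$ is a basis of $\ell^2(\pi)$ these eigenspaces exhaust the spectral decomposition.

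Finally, for the chi-square formula I would invoke the standard spectral expansion of a reversible kernel. Normalizing to $\widetilde q_{\mathbf m} := q_{\mathbf m}/\langle q_{\mathbf m}, q_{\mathbf m}\rangle_\pi^{1/2}$ yields an orthonormal eigenbasis of $\ell^2(\pi)$ with $\widetilde q_{\mathbf 0} \equiv 1$ and
\[
\frac{P^t(\mx,\my)}{\pi(\my)} = \sum_{\mathbf m}\beta_{|\mathbf m|}^{\,t}\,\widetilde q_{\mathbf m}(\mx)\,\widetilde q_{\mathbf m}(\my).
\]
Subtracting the $\mathbf m = \mathbf 0$ term and taking the squared $\ell^2(\pi)$-norm in $\my$ — orthonormality kills all cross terms — gives $\chi^2_\mx(t) = \sum_{\mathbf m \neq \mathbf 0}\beta_{|\mathbf m|}^{\,2t}\,\widetilde q_{\mathbf m}(\mx)^2$; grouping the terms with $|\mathbf m| = m \ge 1$ and recognizing $\sum_{|\mathbf m| = m}\widetilde q_{\mathbf m}(\mx)^2 = h_m(\mx,\mx)$ yields the asserted identity. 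There is no deep obstacle here; the one place to tread carefully is the top of the filtration when $|\Omega| < \infty$, where the index set for $\{q_{\mathbf m}\}$ must be read as exactly those $\mathbf m$ admitting a surviving degree-$|\mathbf m|$ orthogonal polynomial on $\Omega$ — precisely the content of the assumption that $\{q_{\mathbf m}\}$ is an orthogonal \emph{basis} of $\ell^2(\pi)$. Granting that, the only ingredients used are that $P$ is graded (the stated hypothesis on moments) and self-adjoint (reversibility), both immediate.
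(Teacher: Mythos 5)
The paper does not actually give a proof of this lemma: it is quoted verbatim from \cite{khare} (Lemma 3.2) and used as a black box, so there is nothing in the paper's text to compare against line by line. Evaluated on its own, your argument is correct and is essentially the standard proof of this graded--spectral lemma: you build the degree filtration $V_0 \subseteq V_1 \subseteq \cdots$, identify $W_m = V_m \cap V_{m-1}^\perp$ as the span of $\{q_{\mathbf m}: |\mathbf m|=m\}$, use the moment hypothesis to see $P V_m \subseteq V_m$ and self-adjointness (reversibility) to promote this to $P W_m \subseteq W_m$, and then the two-sided containment $P q_{\mathbf m} - \beta_m q_{\mathbf m} \in W_m \cap V_{m-1} = \{0\}$ forces $q_{\mathbf m}$ to be an honest eigenfunction rather than a generalized one. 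The chi-square identity is then the usual spectral expansion of $P^t(\mx,\cdot)/\pi(\cdot) - 1$ in the orthonormal eigenbasis, grouped by total degree. Your caveat at the end about the top of the filtration on a finite state space (that the index set for $\{q_{\mathbf m}\}$ is exactly what survives as a basis of $\ell^2(\pi)$) is the right thing to flag, and it is built into the hypothesis that $\{q_{\mathbf m}\}$ is a basis. I see no gap.
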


\paragraph{Multivariate Hahn Polynomials} % make sure all notation is defined}. 
For contingency tables with only two rows, the Fisher-Yates distribution is simply the multivariate hypergeometric distribution. That is, if $\lambda = (n - k, k), k < \lfloor n/2 \rfloor$ and  $\mu = (\mu_1, \dots, \mu_J)$, then a contingency table in $\T_{\lambda, \mu}$ can be represented by a $(J - 1)$-dimensional vector in the space
\[
\mathbf{X}_{k, \mu}^J = \{\mx = (x_1, \dots, x_J) \in \N_0^J : |\mx| = k, x_j \le \mu_j \},
\]
where $|\mx| = \sum_i x_i$. The \emph{multivariate hypergeometric distribution} is
\[
H_{k, \mu}(\mx) = \frac{\prod_{j = 1}^J \binom{\mu_j}{x_j}}{\binom{n}{k}},
\]
where $n = \sum_j \mu_j$. For example, this distribution arises from sampling without replacement: An urn contains $n$ balls of $|\mu|$ different colors, $\mu_j$ of color $j$. If $\mathbf{X}$ is a vector which records the number of each color drawn in a sample (without replacement) of size $k$, then $\mathbf{X}$ has the multivariate hypergeometric distribution. The orthogonal polynomials for this distribution are called the \emph{multivariate Hahn polynomials}. An overview of univariate Hahn polynomials is in \cite{ismail}. Following the notation from \cite{khare} (Section 2.2.1) and \cite{iliev}, define

%$\mathbf{X}_N^d = \{ \mx = (x_1, \dots, x_d) \in \N_0^d : | \mx| = N \}$.

%For the Hahn polynomials, the rising and falling factorials are defined
\begin{align*}
&a_{(k)} = a(a + 1) \hdots (a + k -1), \\
&a_{[k]} = a(a - 1) \hdots (a - k + 1), \\
&a_{(0)} = a_{[0]} = 1.
\end{align*}
For a vectors $\mx = (x_1, \dots, x_d)$,
\begin{align*}
|\mx| = \sum_{i = 1}^d x_i, \quad |\mx_i| = \sum_{j = 1}^i x_j, \quad |\mx^i| = \sum_{j = i}^d x_j.
\end{align*}

The multivariate \emph{Hahn polynomials}, defined in Section 2.2.1 of \cite{khare}, are
\begin{equation}
Q_{\mm}(\mx; k, \mu) = \frac{(-1)^{|\mm|}}{(k)_{[|\mm|]}} \prod_{j = 1}^{J - 1} \left( -k + |\mx_{j-1}| + |\mm^{j+1}| \right)_{(m_j)} \cdot Q_{m_j}\left( x_j; k - |\mx_{j-1}| - |\mm^{j+1}|, -\mu_j, |\mu^{j+1}| + 2|\mm^{j+1}| \right),
\end{equation}
where
\begin{align*}
Q_m(x; k, \alpha, \beta) &= \prescript{}{3}F_2 \begin{pmatrix}
-n, n + \alpha + \beta - 1, - x & \vline & 1 \\
\alpha, -N & \vline 
\end{pmatrix}
\end{align*}
is the classical univariate Hahn polynomial.
%\textcolor{red}{Check this?? Khare paper has $\mx_{j-1}$, which doesn't make sense when $j = 1$???}

For calculating the expression for chi-squared distance, if the orthogonal polynomials are the eigenfunctions, then it is the kernel polynomials which need to be solved for. These were constructed by Griffiths \cite{griffiths}, \cite{griffithsKernel}. It is an open problem to find a useful equation for the kernel polynomials evaluated at more general states.

\begin{proposition}[Proposition 2.6 from \cite{khare}] \label{prop: kernel}
Suppose $\mu_j \ge k$ for all $j$. Let $\mathbf{e}_j$ be the vector with $1$ in the $j$th index and $0$ elsewhere. Then, 
\begin{equation}
h_m(k \mathbf{e}_j, k \mathbf{e}_j) = \binom{k}{m} \frac{(n - 2m + 1)n_{[m - 1]}(n - \mu_j)_{[m]}}{(n - k)_{[m]} (\mu_j)_{[m]}}.
\end{equation}
\end{proposition}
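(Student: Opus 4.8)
The plan is to collapse the statement to a one-dimensional (two-colour) computation and then quote classical formulas for the univariate Hahn polynomials. Write $\pi = H_{k,\mu}$. I would first record the only structural fact needed: since each $q_\mm$ has exact degree $|\mm|$ and the $q_\mm$ are $\pi$-orthogonal, the family $\{q_\mm : |\mm| = m\}$ is an orthogonal basis of the subspace $V_m \subseteq \ell^2(\pi)$ of polynomials of total degree $\le m$ that are $\pi$-orthogonal to all polynomials of degree $\le m-1$; hence $h_m$ is precisely the reproducing kernel of $V_m$, and in particular
\[
h_m(\mx,\mx) \;=\; \max\{\, f(\mx)^2 : f \in V_m,\; \langle f,f\rangle_\pi = 1 \,\},
\]
with the maximum attained at $f \propto h_m(\mx,\cdot)$.

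Next I would lump all colours other than $j$. Let $\phi(\mx) = x_j$; the pushforward of $\pi$ under $\phi$ is the univariate hypergeometric $\widetilde\pi$ with $\mu_j$ balls of one colour and $n - \mu_j$ of the other and sample size $k$, and $\phi^{\ast}\colon \ell^2(\widetilde\pi) \to \ell^2(\pi)$ is an isometric embedding. The key lemma is that $\E_\pi[p \mid x_j]$ is a polynomial in $x_j$ of degree $\le \deg p$ for every polynomial $p$ on $\mathbf{X}_{k,\mu}^J$: given $x_j = a$, the remaining coordinates are multivariate hypergeometric drawing $k - a$ balls from the $n - \mu_j$ non-$j$ balls, and $\E_\pi[\prod_{j'\neq j}(x_{j'})_{[b_{j'}]} \mid x_j = a]$ is a ratio of falling factorials, polynomial in $a$ of degree $\sum_{j'} b_{j'}$. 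It follows that $\phi^{\ast}$ carries the degree-$m$ space $\widetilde V_m$ of $\ell^2(\widetilde\pi)$ into $V_m$, while $f \mapsto \E_\pi[f \mid x_j]$ carries $V_m$ into $\widetilde V_m$ and is an $\ell^2$-contraction. Since $\mu_j \ge k$ forces $\{x_j = k\} = \{k\mathbf{e}_j\}$, we have $\phi(k\mathbf{e}_j) = k$ and $\E_\pi[f \mid x_j = k] = f(k\mathbf{e}_j)$; plugging these into the variational identity above, applied to both $\pi$ and $\widetilde\pi$, gives
\[
h_m(k\mathbf{e}_j, k\mathbf{e}_j) \;=\; \widetilde h_m(k,k),
\]
the degree-$m$ kernel polynomial of the univariate hypergeometric $\widetilde\pi$ evaluated at its extreme point $x_j = k$. (This also makes transparent why the final answer depends only on $m,k,n,\mu_j$ and not on the other column sums.)

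Finally I would compute $\widetilde h_m(k,k) = \widetilde Q_m(k)^2 / \langle \widetilde Q_m, \widetilde Q_m \rangle_{\widetilde\pi}$, where $\widetilde Q_m$ is the univariate Hahn polynomial orthogonal to $\widetilde\pi$, by substituting the standard closed forms for the value of a Hahn polynomial at the far endpoint of its interval and for its squared norm (both tabulated in the Askey-scheme references), identifying the parameters ($N = k$ and the two remaining parameters in terms of $\mu_j$ and $n - \mu_j$), and simplifying the resulting product of Pochhammer/falling-factorial symbols to arrive at
\[
\binom{k}{m}\,\frac{(n-2m+1)\, n_{[m-1]}\,(n-\mu_j)_{[m]}}{(n-k)_{[m]}\,(\mu_j)_{[m]}}.
\]

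The main obstacle will be the bookkeeping in this last step: the literature carries several incompatible normalizations of the univariate Hahn polynomials (including the ${}_3F_2$ form quoted just above the proposition), so one must take the endpoint value and the squared norm in matching conventions and then grind the falling-factorial algebra carefully --- in particular, the factor $(n-2m+1)$ has to come out of the norm (it is the $\alpha+\beta+2n+1$ factor in the Askey tables). An alternative that avoids the variational argument of the second paragraph is to apply the explicit product recursion for the multivariate Hahn polynomials displayed before the proposition directly at $\mx = k\mathbf{e}_j$: every factor carrying an index $j' \neq j$ degenerates (a Hahn polynomial evaluated at $0$, which equals $1$, or a Pochhammer prefactor that vanishes unless $m_{j'} = 0$), so the sum over $|\mm| = m$ collapses to the single term $\mm = m\mathbf{e}_j$ and one finishes with the same univariate computation.
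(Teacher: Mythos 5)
The paper itself does not prove Proposition~\ref{prop: kernel}; it quotes it as Proposition~2.6 of \cite{khare}, whose own derivation rests on Griffiths' explicit multivariate kernel formulas together with the observation that at $\mx = k\mathbf{e}_j$ the product formula for $Q_\mm$ collapses --- this is essentially your second, ``alternative'' route. Your main route is genuinely different and, I think, cleaner. The reduction $h_m(k\mathbf{e}_j,k\mathbf{e}_j) = \widetilde h_m(k,k)$ is correct and well argued: the reproducing-kernel variational characterization is standard; the $\ge$ direction uses that $\phi^*\colon \ell^2(\widetilde\pi)\to\ell^2(\pi)$ is an isometry mapping $\widetilde V_m$ into $V_m$ (to see the image is orthogonal to all low-degree polynomials, not just those depending on $x_j$, one conditions on $x_j$ and invokes your key lemma); the $\le$ direction uses that $f\mapsto \E_\pi[f\mid x_j]$ is an $\ell^2$-contraction mapping $V_m$ into $\widetilde V_m$, again by the key lemma; and the identity $\E_\pi[f\mid x_j=k]=f(k\mathbf{e}_j)$ holds because $\sum_{j'} x_{j'}=k$ with $x_{j'}\ge 0$ forces $\{x_j=k\}=\{k\mathbf{e}_j\}$, with $\mu_j\ge k$ ensuring this point is in the state space. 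The key lemma itself follows from the factorial-moment identity $\E\big[\prod_{j'\neq j}(x_{j'})_{[b_{j'}]}\,\big|\,x_j=a\big] = (k-a)_{[B]}\prod_{j'}(\mu_{j'})_{[b_{j'}]}/(n-\mu_j)_{[B]}$ with $B=\sum b_{j'}$, a degree-$B$ polynomial in $a$. What this buys over the direct-substitution route is that you never need the explicit multivariate Hahn normalization --- only the classical univariate Hahn endpoint value and squared norm --- and it makes transparent why the answer depends only on $(m,k,n,\mu_j)$.

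The one genuine gap is that the last step is a plan, not a computation: you stop short of writing $\widetilde Q_m(k)$ and $\langle\widetilde Q_m,\widetilde Q_m\rangle_{\widetilde\pi}$ in a single fixed normalization and grinding the falling-factorial algebra to produce $\binom{k}{m}(n-2m+1)n_{[m-1]}(n-\mu_j)_{[m]}/\big((n-k)_{[m]}(\mu_j)_{[m]}\big)$. Given how sensitive this is to the several incompatible Hahn conventions in the literature (a risk you correctly flag), that verification must actually be carried out before the proof can be considered complete; commit to the $\prescript{}{3}F_2$ form stated just before the proposition and finish the arithmetic.
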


\subsection{Fisher-Yates Distribution} \label{sec: FYdistribution}

%Not sure what to include here?? Could include as much as in the double coset paper section 

The measure induced on contingency tables by the uniform distribution on $S_n$ is
\begin{equation} \label{eqn: fisherNew}
    \pi_{\lambda, \mu}(T) = \frac{1}{n!} \prod_{i, j} \frac{\lambda_i! \mu_j!}{T_{ij}!}.
    %P_{\lambda, \mu}(T) = \prod_{j = 1}^J \binom{\mu_j}{T_{1j}, \dots, T_{Ij}}/ \binom{n}{\lambda_1, \dots, \lambda_I}
\end{equation}

This is the \emph{Fisher-Yates} distribution on contingency tables, a mainstay of applied statistical work in chi-squared tests of independence. In applications it is natural to test the assumption that row and column features are independent and that the observed table is a sample with cell probabilities $p_{ij} = \theta_i \eta_j$. For this model, the row and column sums are sufficient statistics; conditioning on the row and column sums of the table gives the Fisher-Yates distribution. 

A useful way of describing a sample from $\pi_{\lambda, \mu}$ is by \emph{sampling without replacement}: Suppose that an urn contains $n$ total balls of $J$ different colors, $\mu_j$ of color $j$. To empty the urn, make $I$ sets of draws of unequal sizes.  First draw $\lambda_1$ balls, next $\lambda_2$, and so on until there are $\lambda_I = n - \sum_{i = 1}^{I - 1} \lambda_i$ balls left. Create a contingency table by setting $T_{ij}$ to be the number of color $j$ in the $i$th draw.

This description is useful for calculating moments of entries in a table, by utilizing the fact that the draws are exchangeable: Let $X_s^{(i, j)}$, $1 \le s \le n$ be $1$ if in the $i$th round of drawings, the $s$th draw was color $j$, and $0$ otherwise. That is,
\[
T_{ij} = \sum_{s = 1}^{\lambda_i} X_s^{(i, j)}.
\]
The expectation of one entry in the table is
\begin{align*}
\E_{\pi_{\lambda, \mu}}[T_{ij}] = \sum_{s = 1}^{\lambda_i}\P( X_s^{(i, j)} = 1) = \frac{\lambda_i \mu_j}{n},
\end{align*}
using that $\P( X_s^{(i, j)} = 1) = \mu_j/n$ for any $s$ since the variables are exchangeable in $s$. These calculations for the moments of Fisher-Yates tables are needed in the following section to normalize eigenfunctions.

The usual test of the independence model uses the chi-squared statistic:
\[
\chi^2(T) = \sum_{i, j} \frac{(T_{ij} - \lambda_i \mu_j/n)^2}{\lambda_i \mu_j/n}.
\]
Under general conditions, see e.g.\ \cite{kangKlotz}, the chi-squared statistic has limiting limiting chi-squared distribution with $(I-1)\cdot(J-1)$ degrees of freedom. This result says that, if the null hypothesis is true, most tables will be close to a table $T^*$ with entries $T^*_{ij} = \lambda_i \mu_j/n$. Another feature to investigate for the independence model is the number of zero entries in a contingency table. Since most tables will be close to the table $T^*$, which has no zeros, zeros are a pointer to the breakdown of the independence model.  Section 5.2 of \cite{diaconisSimper} proves that the number of zeros is asymptotically Poisson, under naturally hypotheses on row and column sums. In \cite{paguyo}, limit theorems for fixed points, descents, and inversions of permutations chosen uniformly from fixed double cosets are studied.  

%NTDNN -- my zeros calculation!!

%Is this good?
%What else to add? Statistical motivation??? Could copy over more details from Persi paper

%3)pg 9 in section 2.4, again, give a reference, at least to our paper for Fisher-Yates and say in a paragraph that it's the conditional distribution of the table, given the row /column sums, under the independence model.

%The great statistician R.A.\ Fisher suggested a different calibration: Fix the row sums, fix the column sums and look at the conditional distribution of the table given the row and column sums (under the independence model). It is an elementary calculation to show that $\P(T \mid \lambda_i, \mu_j)$ is the Fisher-Yates distribution \eqref{eqn: fisherNew}. Notice that the Fisher-Yates distribution does not depend on the `nuisance parameters' $\alpha_i, \beta_j$. This is called \emph{Fishers exact test}. 
%NTDNN -- edit this section!
\paragraph{Majorization Order} Let $T$ and $T'$ be tables with the same row and column sums. Say that $T \prec T'$ (`$T'$ majorizes $T$') if the largest element in $T'$ is greater than the largest element in $T$, the sum of the two largest elements in $T'$ is greater than the sum of the two largest elements in $T$, and so on. Of course the sum of all elements in $T'$ equals the sum of all elements of $T$.

\begin{example} \label{ex: major}
For tables with $n = 8, \lambda_1 = \lambda_2 = \mu_1 = \mu_2 = 4$, there is the following ordering
\[
\begin{pmatrix}
2 & 2 \cr 
2 & 2 
\end{pmatrix} \prec \begin{pmatrix}
3 & 1 \cr 
1 & 3 
\end{pmatrix} \prec
\begin{pmatrix}
4 & 0 \cr 
0 & 4 
\end{pmatrix}.
\]
\end{example}

Majorization is a standard partial order on vectors \cite{marshallOlkinBooks} and Harry Joe \cite{joe1985ordering} has shown it is useful for contingency tables.

\begin{proposition} \label{prop: CTorder}
Let $T$ and $T'$ be tables with the same row and column sums given by $\lambda, \mu$ and $\pi_{\lambda, \mu}$ the Fisher-Yates distribution. If $T \prec T'$, then \[
\pi_{\lambda, \mu} (T) >  \pi_{\lambda, \mu}(T').
\]
\end{proposition}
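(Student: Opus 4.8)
The plan is to reduce the statement to a monotonicity property of the map $T \mapsto \prod_{i,j} T_{ij}!$ under majorization of the \emph{multiset of entries}. First I would note that, among all tables with row sums $\lambda$ and column sums $\mu$, the prefactor $\prod_i \lambda_i!\prod_j \mu_j!$ appearing in \eqref{eqn: fisher-yates} is constant, so
\[
\pi_{\lambda,\mu}(T) = \frac{\prod_i \lambda_i!\,\prod_j \mu_j!}{n!}\cdot\frac{1}{\prod_{i,j} T_{ij}!},
\]
and hence $\pi_{\lambda,\mu}(T) > \pi_{\lambda,\mu}(T')$ is equivalent to $\prod_{i,j} T_{ij}! < \prod_{i,j} T'_{ij}!$. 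So it suffices to show: if the decreasing rearrangement of the entries of $T$ is majorized by that of $T'$ and the largest entry of $T'$ strictly exceeds that of $T$ (both of which are part of the hypothesis $T \prec T'$), then $\prod_{i,j} T_{ij}! < \prod_{i,j} T'_{ij}!$. Note that this reduction discards all table structure: the intermediate objects in the argument below need only be nonnegative integer vectors with a fixed sum, not contingency tables.

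Next I would invoke the standard ``Robin Hood'' (unit transfer) description of the majorization order on integer vectors with a fixed sum (see \cite{marshallOlkinBooks}; equivalently, the covering relations of the dominance order on partitions): if $a$ and $b$ are nonnegative integer vectors with $\sum_k a_k = \sum_k b_k$ and $a$ is majorized by $b$ in the usual sense, then $b$ can be transformed into $a$ by a finite sequence of unit transfers, each of which picks two coordinates with values $p > q$ and replaces them by $p-1$ and $q+1$. Applying this with $a=$ entries of $T$ and $b=$ entries of $T'$, it remains to track the effect of one unit transfer on $\prod_k(\mathrm{value}_k)!$. Such a transfer multiplies the product of factorials by
\[
\frac{(p-1)!\,(q+1)!}{p!\,q!} = \frac{q+1}{p} \le 1,
\]
with equality precisely when $p=q+1$, in which case the transfer merely swaps the two values and leaves the multiset (hence the product) unchanged. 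Since $T\prec T'$ forces the largest entry of $T'$ to strictly exceed that of $T$, the two multisets of entries are distinct, so at least one transfer in the chain must be nontrivial, i.e.\ have $p\ge q+2$ and contribute a factor strictly less than $1$. Multiplying all factors gives $\prod_{i,j}T_{ij}!\,/\,\prod_{i,j}T'_{ij}! < 1$, which is exactly the claim.

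I expect the only point requiring genuine care --- the ``hard part'' --- is the second step: correctly invoking the integer transfer lemma between the two multisets of entries, and keeping the bookkeeping of trivial versus nontrivial transfers so that the \emph{strict} inequality in the conclusion is actually obtained. An alternative route avoids the transfer lemma entirely: since $x\mapsto\log\Gamma(x+1)$ is strictly convex on $[0,\infty)$, the symmetric function $v\mapsto\sum_k\log\Gamma(v_k+1)$ is strictly Schur-convex, and strict Schur-convexity applied to the entry vectors of $T$ and $T'$ immediately yields $\sum_{i,j}\log(T_{ij}!) < \sum_{i,j}\log(T'_{ij}!)$. I would present the transfer argument as the main proof, since it is elementary and gives the ratio $\prod_{i,j}T_{ij}!/\prod_{i,j}T'_{ij}!$ explicitly as a product of factors $\frac{q+1}{p}$, and mention the Schur-convexity viewpoint as a remark.
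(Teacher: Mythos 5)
Your proof is correct, and it takes a somewhat more hands-on route than the paper. The paper's argument is a one-liner: write $\log \pi_{\lambda,\mu}(T) = C - \sum_{i,j}\log(T_{ij}!)$, observe this is a symmetric, concave function of the entries (by log-convexity of $\Gamma$), and invoke the fact from \cite{marshallOlkinBooks} that a symmetric concave function is Schur-concave, hence order-reversing for majorization. Your main argument instead unpacks the standard transfer characterization of integer majorization (Muirhead/Robin Hood), which is essentially a proof from scratch of the Schur-concavity fact for this particular function. Your ``alternative route'' remark is precisely the paper's proof. What your transfer version buys is explicit multiplicative control: the ratio $\prod T_{ij}!/\prod T'_{ij}!$ is exhibited as a product of factors $(q+1)/p \le 1$, which makes the \emph{strict} inequality transparent. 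That is worth noting, because the paper's invocation of Schur-concavity, as written, only delivers $\pi_{\lambda,\mu}(T) \ge \pi_{\lambda,\mu}(T')$; one actually needs \emph{strict} Schur-concavity (strict concavity of $\log\Gamma(\cdot+1)$ plus the entry multisets being distinct) to get the strict conclusion stated in the proposition. Your proof handles this cleanly by locating a nontrivial transfer.

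One small imprecision: you assert that $T \prec T'$ ``forces the largest entry of $T'$ to strictly exceed that of $T$.'' Under the standard majorization partial order (which the paper is clearly using, notwithstanding the informal wording of the definition), this need not hold — e.g.\ $(2,1,1) \prec (2,2,0)$ with equal largest entries. What you actually need, and what does follow from $\prec$ being a strict order on multisets, is simply that the two entry multisets are distinct. That alone guarantees at least one nontrivial transfer with $p \ge q+2$ in the chain, which is all the strictness argument requires. With that wording fixed, the proof is airtight.
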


\begin{proof}
From the definition, we have $\log(pi_{\lambda, \mu}(T)) = C - \sum_{i, j} \log(T_{ij}!)$ for a constant $C$. This form makes it clear the right hand side is a symmetric function of the $IJ$ numbers $\{T_{ij} \}$. The log convexity of the Gamma function shows that it is concave. A symmetric concave function is Schur concave: That is, order-reversing for the majorization order \cite{marshallOlkinBooks}.
\end{proof}

\begin{remark} \label{remark: largest}
Joe \cite{joe1985ordering} shows that, among the real-valued tables with given row and column sums, the independence table $T^*$ is the unique smallest table in majorization order. He further shows that if an integer valued table is, entry-wise, within $1$ of the real independence table, then $T$ is the unique smallest table with integer entries. In this case, the corresponding double coset has $P(T)$ largest. 
\end{remark}

\begin{example}
Fix a positive integer $a$ and consider an $I \times J$ table $T$ with all entries equal to $a$. This has constant row sums $J\cdot a$ and column sums $I \cdot a$. It is the unique smallest table with these row and column sums, and so corresponds to the largest double coset. For $a = 2, I = 2, J = 3$, this table is
\[
T = \begin{pmatrix} 2 & 2 & 2 \cr 2 & 2 & 2 \end{pmatrix}.
\]
\end{example}

Contingency tables with fixed row and column sums form a graph with edges between tables that can be obtained by one move of the following: pick two rows $i, i'$ and two columns $j, j'$. Add $+1$ to the $(i, j)$ entry, $-1$ to the $(i', j)$ entry,  $+1$ to the $(i', j')$ entry, and $-1$ to the $(i, j')$ entry (one move of the Markov chain Definition \ref{def: chain}). This graph is connected and moves up or down in the majorization order as the $2 \times 2$ table with rows $i, i'$ and columns $j, j'$ moves up or down. See Example \ref{ex: major} above.

\section{Eigenvalues and Eigenfunctions} \label{sec: evSection}

From Proposition \ref{prob: DCev}, the multiplicity of the eigenvalue $\beta_\rho$ in the contingency table chain is
\begin{align*}
&m_\rho = m_\rho^\lambda \cdot m_\rho^\mu \\
&m_\rho^\lambda := \left\langle \chi_\rho, \mathrm{Ind}_{S_\lambda}^{S_n}(1) \right\rangle 
\end{align*}
These multiplicities can be determined by Young's Rule \cite{diaconisWald}, \cite{jamesBook}: Given $\lambda$ a partition of $n$, take $\lambda_1$ of the symbol `$1$', $\lambda_2$ of the symbol `2', and so on. The value $\left\langle \chi_\rho, \mathrm{Ind}_{S_\lambda}^{S_n}(1) \right\rangle$ is equal to the number of ways of arranging these symbols into a tableau of shape $\rho$ with strictly increasing columns and weakly increasing rows. In other words, $m_\rho^\lambda$ is the number of semistandard Young tableau of shape $\rho$ and weight $\lambda$. These are also called \emph{Kostka numbers}, and have a long enumerative history in connection with symmetric functions (see Chapter 7 of \cite{stanley}).

\begin{example}
Consider $n = 5$ and $\lambda = (3, 1, 1), \mu = (2, 2, 1)$. The possible eigenvalues and their multiplicities are in Table \ref{tab: evs}.

\begin{table}[h] 
\begin{center}
\begin{tabular}{|l|l|l|l|l|}
\hline
\textbf{$\rho$} & \textbf{$\beta_\rho$} & $m_\rho^\lambda$ & $m_\rho^\mu$ & \textbf{$m_\rho$} \\ \hline \hline
$(5)$           & $1.0$                 & 1                                                                & 1                                                            & 1                 \\ \hline
$(4, 1)$        & $0.6$                 & 2                                                                & 2                                                            & 4                 \\ \hline
$(3, 2)$        & $0.36$                & 1                                                                & 2                                                            & 2                 \\ \hline
$(3, 1, 1)$     & $0.20$                & 1                                                                & 1                                                            & 1                 \\ \hline
\end{tabular}
\end{center}
\caption{Eigenvalues and multiplicities for $\lambda = (3, 1, 1), \mu = (2, 2, 1)$.} \label{tab: evs}
\end{table}

For example, for $\mu = (2, 2, 1)$ the symbols are $11223$ and there are $2$ ways of arranging them into a tableau of shape $(3, 2)$ with increasing rows and strictly increasing columns:
\[
\begin{matrix}
1 & 1 & 2 \cr
2 & 3 & 
\end{matrix}, \quad
\begin{matrix}
1 & 1 & 3 \cr
2 & 2 & 
\end{matrix}.
\]

%Confirmed by writing down transition matrix by hand
\end{example}

Thus for $\rho = (3, 2)$, the contribution to the multiplicity is $m_\rho^\mu = 2$, as shown in Table \ref{tab: evs}. While there is not an explicit formula for the result of Young's rule (indeed, this would provide a formula for the number of contingency tables with fixed row and column sums), we can say things for special cases. See the exercises in Chapter 6.4 \cite{macdonald} for more properties.

\begin{corollary} \label{cor: multCases}
Let $\rho, \lambda, \mu$ be partitions of $n$ and $|\rho|$ denote the number of parts of the partition, with $|\lambda| = I, |\mu| = J$. Let $\beta_\rho$ denote the eigenvalue in the random transpositions chain corresponding to $\rho$ and $m_\rho$ the multiplicity of $\beta_\rho$ in the chain lumped to $\T_{\lambda, \mu}$.
\begin{enumerate}[(a)]
    \item The multiplicity of the second-largest eigenvalue $\beta_{(n-1, 1)} = 1 - 2/n$ is $m_{(n-1, 1)} = (I -1)\cdot(J-1)$.
    
    \item If $m_\rho > 0$, then $|\rho| \le \min(I, J)$ and $\rho_1 \ge \max(\lambda_1, \mu_1)$. 
    
    \item If $\rho = (n -k, k)$ for $1 \le k \le \lfloor n/2 \rfloor$ and $\lambda = (n - j, j)$ then 
    \[
    m_\rho^\lambda = \begin{cases}
    0 & \text{if} \,\,\, k > j \\
    1 & \text{else}
    \end{cases}.
    \]
       
      \item  If $\rho = (n - k, k)$, $\mu = (\mu_1, \dots, \mu_J)$ and $\mu_1 \ge k$, then
    \[
    m_\rho^\mu = \left\lbrace(x_1, \dots, x_{J-1}) \in \mathbb{N}^{J-1} : \sum_{j = 1}^{J - 1} x_j = k, x_j \le \mu_{j + 1} \right\rbrace = |\T_{(k, n - \mu_1 - k), (\mu_2, \mu_3, \dots, \mu_j)}|.
    \]
    
    \end{enumerate}
\end{corollary}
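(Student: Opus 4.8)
The engine for all four parts is Young's rule, recalled above: $m_\rho^\lambda = \langle \chi_\rho, \mathrm{Ind}_{S_\lambda}^{S_n}(1)\rangle$ is the Kostka number, i.e.\ the number of semistandard Young tableaux (SSYT) of shape $\rho$ with content $\lambda$, and likewise $m_\rho^\mu$ counts SSYT of shape $\rho$ with content $\mu$; since $m_\rho = m_\rho^\lambda\, m_\rho^\mu$ it suffices to analyze one factor at a time. I would isolate two elementary facts about an SSYT $T$ of shape $\rho$ with content $\lambda=(\lambda_1,\dots,\lambda_I)$, all $\lambda_i\ge 1$: (i) the first column of $T$ is strictly increasing, so its $|\rho|$ entries (one per part of $\rho$) are distinct elements of $\{1,\dots,I\}$, whence $|\rho|\le I$; and (ii) strict increase down columns forces every copy of the symbol $1$ into the first row, whence $\rho_1\ge\lambda_1$. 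Applying (i) and (ii) once with $\lambda$ and once with $\mu$ gives part (b) outright.

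The plan is then to prove part (d) and deduce (a) and (c) as special cases. Fix $\rho=(n-k,k)$ (a partition, so $k\le n-k$) and $\mu=(\mu_1,\dots,\mu_J)$ with $\mu_1\ge k$. To an SSYT $T$ of shape $\rho$ and content $\mu$ I associate the vector $(x_1,\dots,x_{J-1})$, where $x_j$ is the number of entries equal to $j+1$ in the second row of $T$. Since the second row has $k$ cells and (by (ii)) no $1$'s, $\sum_j x_j=k$, and $x_j\le\mu_{j+1}$ since only $\mu_{j+1}$ copies of $j+1$ exist; so this lands in the set on the right of (d). For the inverse: the second row is the unique weakly increasing word with $x_j$ copies of $j+1$, and the first row must be the weakly increasing arrangement of the complementary multiset — $\mu_1$ copies of $1$ and $\mu_j-x_{j-1}\ge 0$ copies of $j$ for $j\ge 2$, a multiset of size $n-k$. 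The one thing to verify is column-strictness, and this is exactly where $\mu_1\ge k$ is used: the first $\mu_1\ge k$ entries of row one are $1$'s while every entry of row two is $\ge 2$, so each of columns $1,\dots,k$ reads $(1,b)$ with $b\ge 2$. Hence the two maps are inverse bijections and $m_\rho^\mu$ equals the cardinality of that set; and that set is precisely the set of first rows of a $2\times(J-1)$ contingency table with row sums $k,\ n-\mu_1-k$ and column sums $\mu_2,\dots,\mu_J$ (the second row being forced to $(\mu_{j+1}-x_j)_j$), identifying its cardinality with $|\T_{(k,\,n-\mu_1-k),\,(\mu_2,\dots,\mu_J)}|$; when $\mu_1+k>n$ both descriptions are empty, consistent with $\rho_1=n-k\ge\mu_1$ from (b).

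Parts (a) and (c) then fall out. For (a), $(n-1,1)=(n-k,k)$ with $k=1$ and $\mu_1\ge 1$ always, so (d) says $m_{(n-1,1)}^\mu$ counts $(x_1,\dots,x_{J-1})\in\mathbb{N}^{J-1}$ with $\sum_j x_j=1$ and $x_j\le\mu_{j+1}$; since each $\mu_{j+1}\ge 1$ these are exactly the $J-1$ unit vectors, so $m_{(n-1,1)}^\mu=J-1$, and symmetrically $m_{(n-1,1)}^\lambda=I-1$; multiplying gives (a). For (c), take $J=2$ and $\mu=(n-j,j)$: since $\lambda=(n-j,j)$ and $\rho$ are partitions, $\mu_1=n-j\ge\lceil n/2\rceil\ge k$, so (d) applies and gives $m_\rho^\lambda=\#\{x_1\in\mathbb{N}:\ x_1=k,\ x_1\le j\}$, which is $1$ if $k\le j$ and $0$ otherwise. (Equivalently, (c) is the assertion that $K_{\rho\lambda}\ne 0$ iff $\rho$ dominates $\lambda$, i.e.\ $k\le j$ for two-row partitions of $n$, with the tableau unique when nonzero because only the symbols $1,2$ occur.)

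None of these steps is deep; the only place demanding care is the column-strictness check in (d), which pins down why the hypothesis $\mu_1\ge k$ is needed, together with the minor bookkeeping that both sides of the identity vanish for degenerate parameters ($\mu_1+k>n$). One could instead bypass Young's rule for (b) and the $0/1$ dichotomy in (c) using the dominance-order criterion for nonvanishing of Kostka numbers, but the SSYT picture is cleanest for the explicit counts in (a) and (d).
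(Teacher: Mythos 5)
Your argument is correct and follows the paper's approach: Young's rule, identifying $m_\rho^\lambda$ with the number of SSYT of shape $\rho$ and content $\lambda$, and then exploiting column-strictness and the hypothesis $\mu_1\ge k$ to build the bijection for part (d). The only structural difference is that you prove (d) first and read off (a) and (c) as the special cases $k=1$ and $J=2$, whereas the paper argues each part directly; this is a tidy streamlining rather than a different method, and your extra care about the inverse map and the degenerate case $\mu_1+k>n$ is a nice addition.
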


\begin{remark} 
Corollary \ref{cor: multCases}(b) shows that a table with only two rows or columns will only have eigenvalues $\beta_\rho$ with $\rho = (n - m, m)$. The eigenvalue defined by $\rho = (1, 1, \dots, 1)$ is $\beta_\rho = -1$. This has $0$ multiplicity unless $\lambda = \mu = (1, 1, \dots, 1)$ for which $\T_{\lambda, \mu} \simeq S_n$. 
\end{remark}

\begin{proof}
\textbf{(a):} An arrangement of symbols determined by $\lambda$ into a tableau of shape $(n-1, 1)$ is determined by the choice of the symbol to be in the second row. To fit the constraint the columns in the tableau are strictly increasing, any symbol except `1' could be placed in the second row. Thus, there are $|\lambda| - 1$ possibilities. For example, if $\lambda = (3, 2, 1)$, there are 2 possibilities:
\begin{align*}
\begin{matrix}
1 & 1 & 1 & 2 & 2 \cr
3 &   & & &
\end{matrix}, \quad 
\begin{matrix}
1 & 1 & 1 & 2 & 3 \cr
2 &   & & &
\end{matrix}.
\end{align*}

\textbf{(b):} Consider the first column of an arrangement of the symbols determined by $\lambda$ into a tableau of shape $\rho$. For the column to be strictly increasing, there must be at least $|\rho|$ symbols from $\lambda$, to give the column $1, 2, \dots, |\rho|$. Thus, if $|\lambda| < |\rho|$ then $m_\rho^\lambda = 0$. All of the `1' symbols must be contained in the first row of the tableau, which gives the constraint $\lambda_1 \le \rho_1$. 

\textbf{(c):} If $|\lambda| = 2$ and $|\rho| = 2$ then there is at most one way of arranging the symbols of $\lambda$ into a tableau of shape $\mu$. For example, $\lambda = (3, 2), \rho = (4, 1)$:
\[
\begin{matrix}
1 & 1 & 1 & 2 \\
2 &   & & 
\end{matrix}.
\]
To satisfy the constraint that columns are strictly increasing it is necessary that the second row only contains symbols `2'. If $\lambda_1 > \rho_1$, this is not possible. Note that the assumption $\lambda_2 \le \lambda_1$ ensures that there would never need to be a column with only the symbol `2'.

\textbf{(d):} Now suppose $\mu = (\mu_1, \dots, \mu_J)$ with $J > 2$. The assumption $\mu_1 \ge k$ ensures that any assignment of the second row will obey the strictly increasing column constraint. That is, any selection of $k$ symbols from the $n - \mu_1$ symbols which are greater than $1$ would be a valid assignment for the second row of the tableau. There are $J - 1$ possible symbols, $\mu_j$ of each type. If $x_j$ denotes the number of symbol $j+1$ in row $2$, then the second row is determined by $(x_1, \dots, x_{J-1})$ with $x_j \le \mu_{j+1}$ and $\sum_{j = 1}^{J-1} x_j = k$. The number of possibilities is exactly the number of $2 \times (J-1)$ contingency tables with row sums $k, n - \mu_1 - k$ and column sums $\mu_2, \dots, \mu_J$.

\end{proof}

The multiplicities also behave well with respect to the \emph{majorization order} on partitions: If $\lambda, \lambda'$ are partitions of $n$, then $\lambda \prec \lambda'$ if $\lambda_1 + \lambda_2 + \hdots + \lambda_k \le \lambda_1' + \lambda_2' + \hdots + \lambda_k'$ for all $1 \le k \le |\lambda|$. In other words, $\lambda'$ can be obtained from $\lambda$ by successively `moving up boxes' (\cite{macdonald} Chapter 1). For example,
\begin{center}
\begin{tikzpicture}
\draw (-.5, -.5) node {$\lambda = $ };
\foreach \x in {0,...,3}
	\filldraw (\x*.25, 0) circle (.5mm);
\foreach \x in {0,...,3}
	\filldraw (\x*.25, -.25) circle (.5mm);
\foreach \x in {0,...,2}
	\filldraw (\x*.25, -.5) circle (.5mm);
\foreach \x in {0,...,0}
	\filldraw (\x*.25, -.75) circle (.5mm);
	\filldraw [red] (1*.25, -.75) circle (.5mm);

\draw (2, -.5) node {$\lambda^\prime = $ };
\foreach \x in {0,...,3}
	\filldraw (\x*.25 + 2.75, 0) circle (.5mm);
	\filldraw [red] (4*.25 + 2.75, 0) circle (.5mm);
\foreach \x in {0,...,3}
	\filldraw (\x*.25 + 2.75, -.25) circle (.5mm);
\foreach \x in {0,...,2}
	\filldraw (\x*.25 + 2.75, -.5) circle (.5mm);
\foreach \x in {0,...,0}
	\filldraw (\x*.25 + 2.75, -.75) circle (.5mm);

\end{tikzpicture}
\end{center}

\begin{remark}
The eigenvalues $\beta_\rho$ are monotonic with respect to this ordering: If $\rho \prec \rho'$, then $\beta_\rho \le \beta_{\rho'}$; see Chapter 3 of \cite{diaconisBook} for more properties. This tells us that $\rho = (n - 1, 1)$ gives the largest eigenvalue not equal to $1$.
\end{remark}

%7.10.5 in \cite{stanley} says that if |\lambda| = |\rho| and $m_\rho^\lambda > 0$, then $\lambda \prec \mu$ and $m_\rho^\rho = 1$ .... But I don't see something like my lemma, maybe it's new! Better check it's correct haha

%Ah, shoot, part (a) at least does seem to be out there. But cool that I discovered it independently! ... This contains a proof and nice exposition on Kostka numbers (seems to just be on arxiv), and says the fact is well-known but not often proven in the literature \url{https://arxiv.org/abs/1506.07022}
%What about part (b)?? Ahh, here it is: \url{https://webspace.maths.qmul.ac.uk/m.fayers/papers/kostka.pdf} just a short unpublished note ... but it is 3 pages long, so maybe my simplistic proof idea will not cut it. Sooooo...should I just cut this lemma entirely? We don't need it for any result. But cool that I thought of this on my own!

The following lemma is well-known in the literature, e.g.\ \cite{kostka}. 
%\textcolor{blue}{For Persi: I found that these properties are indeed known. Should I include them? They're not directly relevant to any of my arguments, I just though they were nice.}

\begin{lemma}
Let $\lambda, \lambda', \rho$ be partitions of $n$. Then,
\begin{enumerate}[(a)]
    \item $m_\rho^\lambda \neq 0$ if and only if $\lambda \prec \rho$.
    \item If $\lambda \prec \lambda' \prec \rho$, then $m_\rho^\lambda \ge m_\rho^{\lambda'}$.
\end{enumerate}
\end{lemma}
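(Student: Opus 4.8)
The plan is to prove both parts via the combinatorial description of $m_\rho^\lambda$ as the number of semistandard Young tableaux (SSYT) of shape $\rho$ and weight (content) $\lambda$, which is exactly the characterization supplied by Young's rule in the paragraph preceding Corollary \ref{cor: multCases}. For part (a), the claim is that $m_\rho^\lambda \neq 0$ iff $\lambda \prec \rho$. For the forward direction, suppose an SSYT $T$ of shape $\rho$ and weight $\lambda$ exists. Since rows are weakly increasing and columns strictly increasing, every entry equal to $1$ must lie in row $1$; more generally all entries $\le k$ must occupy the first $k$ rows (an entry $= r$ in row $s$ forces, by strict column increase, entries $1, \dots, r$ below it... actually above it, so $r \ge s$). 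Hence the number of cells in the first $k$ rows of $\rho$ is at least the number of entries $\le k$, i.e. $\rho_1 + \dots + \rho_k \ge \lambda_1 + \dots + \lambda_k$, which is precisely $\lambda \prec \rho$. For the reverse direction, given $\lambda \prec \rho$ I would exhibit an explicit SSYT: fill row $i$ of shape $\rho$ greedily using the smallest available symbols subject to the column-strictness constraint (equivalently, run the standard ``RSK-free'' construction of the Kostka-positivity witness), and check the dominance inequalities guarantee we never run out of symbols. This is the classical fact that $K_{\rho\lambda} > 0 \iff \lambda \trianglelefteq \rho$; I would either cite \cite{macdonald} or spell out the greedy filling in a sentence or two.

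For part (b), I would construct an explicit injection from SSYT of shape $\rho$ and weight $\lambda'$ into SSYT of shape $\rho$ and weight $\lambda$, under the hypothesis $\lambda \prec \lambda' \prec \rho$. The key reduction is that majorization $\lambda \prec \lambda'$ can be realized by a finite sequence of elementary moves, each lowering one part by $1$ and raising a later part by $1$ (moving a box down, so the partition becomes ``less spread out''). So it suffices to handle the case where $\lambda$ and $\lambda'$ differ in exactly two coordinates: $\lambda_a = \lambda'_a - 1$, $\lambda_b = \lambda'_b + 1$ for some $a < b$, and $\lambda_i = \lambda'_i$ otherwise. In this case I want an injection from tableaux of content $\lambda'$ to tableaux of content $\lambda$, which amounts to showing $K_{\rho \lambda} \ge K_{\rho \lambda'}$. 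The cleanest route is to invoke the well-known monotonicity of Kostka numbers under dominance of the weight (e.g. via the fact that $s_\rho = \sum_\lambda K_{\rho\lambda} m_\lambda$ and the transition between monomial and ``dominance-ordered'' bases, or via the Bender--Knuth / crystal-operator description of how tableaux of a given content map to tableaux of a dominating content). Concretely, a Bender--Knuth-style involution on the entries $\{a, a+1, \dots, b\}$ of an SSYT lets one redistribute content while preserving shape and semistandardness; iterating the ``pushing content toward earlier symbols'' version of this operation realizes the desired injection.

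The main obstacle is part (b): part (a) is textbook, but producing a clean, self-contained injection for (b) requires care with the column-strictness constraint, since naively transferring a box of content $b$ to content $a$ in an SSYT can violate column-strictness. The safe way is to reduce to the adjacent case ($b = a+1$), where the Bender--Knuth involution $\mathrm{BK}_a$ acts on each row independently by swapping the counts of $a$'s and $(a+1)$'s among the ``free'' cells (those not forced by a neighbor above or below), and then use that among all contents reachable this way the dominance-maximal one is achieved with multiplicity at least that of any other — but turning ``at least'' into an actual injection needs either the RSK/crystal picture or an appeal to Schur-positivity of $s_\rho[\,\cdot\,]$ evaluated via dominance. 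Given that the lemma is stated as ``well-known in the literature, e.g.\ \cite{kostka}'', I expect the intended proof is essentially a one-paragraph citation plus the dominance argument for (a); I would present (a) in full and for (b) either cite the monotonicity result or sketch the reduction-to-adjacent-transposition plus Bender--Knuth argument, flagging it as standard.
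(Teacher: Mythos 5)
The paper does not actually prove this lemma; it states it as ``well-known in the literature'' and cites \cite{kostka}. (The LaTeX source contains a commented-out proof attempt explicitly marked unfinished, so there is no paper proof to compare against.) Your proposal attempts more than the paper does, so let me assess it on its own terms.

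Your forward direction of (a) is correct and complete: strict column increase forces every entry $r$ to lie in a row of index $\le r$, hence the first $k$ rows of $\rho$ must accommodate all $\lambda_1 + \dots + \lambda_k$ entries that are $\le k$, giving $\lambda \prec \rho$. However, the reverse direction as you sketch it has a real gap: the row-by-row greedy fill (``fill row $i$ greedily using the smallest available symbols'') does \emph{not} always produce a valid SSYT, even when $\lambda \prec \rho$. Take $\rho = (4,1,1)$ and $\lambda = (2,2,2)$. Dominance holds ($2\le 4$, $4\le 5$, $6\le 6$), but the row-greedy fill puts $1,1,2,2$ in row $1$, leaving only two $3$s for column $1$ of rows $2$ and $3$ --- a repeated $3$, violating strict column increase. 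A valid witness exists (row $1$: $1,1,2,3$; rows $2,3$: $2,3$), but it requires ``reserving'' a $2$ and a $3$ for the first column, which the naive greedy rule will not do. The hedge you build in (``either cite \cite{macdonald} or spell out the greedy filling'') is wise: if you want the argument self-contained you need either the symbol-by-symbol greedy placement (place all the $k$'s as high and far left as column-strictness allows, reserving the first column) or the standard $\mathfrak{gl}_n$-weight argument, not the row-greedy fill. The commented-out attempt in the paper's source has the same defect.

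For (b), your reduction to a single elementary dominance move and appeal to Bender--Knuth / Kostka monotonicity is the right idea, and you correctly flag that producing a clean injection is delicate because naively retyping a box can break column strictness. Since the paper itself offers only a citation for this lemma, your level of detail here matches the paper; just be aware that if you did want to spell out (b), the Bender--Knuth involution for adjacent symbols gives the injection directly (it is content-reversing on the free cells in each row, and one shows the content closer to $\lambda$ in dominance never has smaller multiplicity), so no appeal to RSK or crystals is strictly necessary.
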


\begin{remark}
No such monotonicity exists in $\rho$. For example, with $n = 4$, $\lambda = (1, 1, 1, 1)$, we have $(1, 1, 1, 1) \prec (2, 1, 1) \prec (2, 2)$ yet $m_{(1, 1, 1, 1)}^\lambda = 1$, $m_{(2, 1, 1)}^\lambda = 3$, $m_{(2, 2)}^\lambda = 2$.

\end{remark}

\subsection{Average Case Mixing Time}

For $2 \times 2$ contingency tables, Corollary \ref{cor: multCases} suffices to determine the multiplicities of all eigenvalues. 

 \begin{lemma} \label{lem: avg}
 %Note that the size of the state space for the lower right entry of this table is {0, 1, ..., k} if k <= l and n \ge (k + l) ... Which is true since k + l \le n/2 + n/2 = n. Okay, all good -- This then makes sense if everything has multiplicity 1, there are (k+1) unique eigenvalues
For $\lambda = (n - k, k), \mu = (n - \ell, \ell)$ for $k\le \ell \le \lfloor n/2 \rfloor$ if $t = (n/4)(\log(k) + c)$ for $c > 0$ then
\[
\sum_{\mx \in \T_{\lambda, \mu}} \pi_{\lambda, \mu}(\mx) \|P^t_\mx - \pi_{\lambda, \mu} \|_{TV}^2 \le e^{-c}.
\]
If $t = cn/4$, then 
\[
\sum_{\mx \in \T_{\lambda, \mu}} \pi_{\lambda, \mu}(\mx) \chi^2_\mx(t) \ge 1 - c.
\]
\end{lemma}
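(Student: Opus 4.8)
The plan is to use Proposition \ref{prob: DCev}, which gives the exact formula
\[
\sum_{\mx \in \T_{\lambda, \mu}} \pi_{\lambda, \mu}(\mx) \chi^2_\mx(\ell) = \sum_{\rho \neq (n)} m_\rho \beta_\rho^{2\ell},
\]
together with the explicit description of the eigenvalues from Theorem \ref{thm: ev}(a) (equivalently Lemma \ref{lem: RTSn}) and the multiplicity computation from Corollary \ref{cor: multCases}. Since $\lambda = (n-k,k)$ and $\mu = (n-\ell,\ell)$ both have two rows, Corollary \ref{cor: multCases}(b) forces every contributing $\rho$ to be of the form $(n-m, m)$, and Corollary \ref{cor: multCases}(c) gives $m_{(n-m,m)}^\lambda = 1$ for $m \le k$ and $0$ otherwise, and similarly $m_{(n-m,m)}^\mu = 1$ for $m \le \ell$. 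Hence $m_{(n-m,m)} = 1$ for $1 \le m \le \min(k,\ell) = k$ and $0$ for $m > k$. The eigenvalue $\beta_{(n-m,m)}$ simplifies (as in Theorem \ref{thm: 2Jev}) to $\beta_m = 1 - \frac{2m(n+1-m)}{n^2}$. So the right-hand side of the average-case identity is exactly $\sum_{m=1}^{k} \beta_m^{2\ell}$.

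For the upper bound, first apply Lemma \ref{lem: evChiSquare} (in expectation over the starting state) to get $\sum_\mx \pi_{\lambda,\mu}(\mx)\|P^t_\mx - \pi_{\lambda,\mu}\|_{TV}^2 \le \tfrac14 \sum_{m=1}^k \beta_m^{2t}$. Then I would bound $\beta_m = 1 - \frac{2m(n+1-m)}{n^2} \le 1 - \frac{2m}{n} \le e^{-2m/n}$, using $n+1-m \ge n$ for $m \le n/2$, which holds here since $k \le \lfloor n/2\rfloor$. Plugging in $t = (n/4)(\log k + c)$ gives $\beta_m^{2t} \le e^{-4tm/n} = e^{-m(\log k + c)} = k^{-m} e^{-cm}$. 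Summing over $m \ge 1$: $\sum_{m\ge 1} k^{-m} e^{-cm} \le \sum_{m\ge 1} k^{-m} \cdot e^{-c}$ for the $m=1$ term... more carefully, $\sum_{m=1}^k k^{-m}e^{-cm} = e^{-c}k^{-1} \cdot \frac{1 - (ke^c)^{-k}}{1 - (ke^c)^{-1}} \le e^{-c} \cdot \frac{1/k}{1 - 1/k} \le e^{-c}$ whenever $k \ge 2$ (and the case $k=1$ is degenerate, where $\T_{\lambda,\mu}$ is trivial or handled separately). Multiplying by $\tfrac14$ only helps, so the bound $e^{-c}$ follows. One must be slightly careful to separate the dominant $m=1$ term and bound the geometric tail; this is the only place where a small constant needs to be tracked, and it is routine.

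For the lower bound I would use the same exact identity $\sum_\mx \pi_{\lambda,\mu}(\mx)\chi^2_\mx(t) = \sum_{m=1}^k \beta_m^{2t}$ and just keep the $m=1$ term: $\sum_{m=1}^k \beta_m^{2t} \ge \beta_1^{2t} = \left(1 - \frac{2n}{n^2}\right)^{2t} = \left(1 - \frac{2}{n}\right)^{2t}$. With $t = cn/4$ this is $\left(1 - \frac2n\right)^{cn/2}$, and using $(1-x)^{1/x} \to e^{-1}$ and more precisely $\log(1-2/n) \ge -2/n - 2/n^2 \ge -(2/n)(1 + o(1))$, one gets $\beta_1^{2t} \ge e^{-c(1+o(1))} \ge 1 - c(1+o(1))$ by $e^{-y} \ge 1 - y$; tightening constants (or interpreting the statement asymptotically in $n$) yields $\ge 1 - c$. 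The main obstacle here is nothing deep — it is purely making sure the elementary inequalities $1 - x \le e^{-x}$ and the geometric-series bookkeeping are applied in the right direction and that the small-$k$ edge cases are acknowledged; the conceptual content is entirely carried by Proposition \ref{prob: DCev} and Corollary \ref{cor: multCases}, which reduce the problem to a one-parameter sum.
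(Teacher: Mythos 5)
Your high-level strategy matches the paper's: use the multiplicity computation (Corollary~\ref{cor: multCases}(c)) to reduce the average chi-squared distance to the one-parameter sum $\sum_{m=1}^k \beta_m^{2t}$ with $\beta_m = 1 - 2m(n+1-m)/n^2$, then bound. However, there is a genuine error in the key step of your upper bound. You claim $\beta_m \le 1 - 2m/n$ ``using $n+1-m \ge n$'', but $n+1-m \ge n$ holds only for $m \le 1$. For $m\ge 2$ the inequality $\beta_m \le 1-2m/n$ is false, and the conclusion $\beta_m \le e^{-2m/n}$ actually fails for $m$ near $n/2$: at $m=n/2$ one has $\beta_m = 1/2 - 1/n$, which exceeds $e^{-1}\approx 0.368$. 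The best elementary per-term bound from $n+1-m \ge n/2$ is only $\beta_m \le 1 - m/n \le e^{-m/n}$, and feeding that into your geometric-series argument does not yield the stated bound $e^{-c}$ for all $c > 0$ and $k$. The paper sidesteps this by using the monotonicity $m(n+1-m)\ge n$ for $1\le m\le n/2$ to bound \emph{every} term by the $m=1$ term, giving $\sum_{m=1}^k \beta_m^{2t} \le k\,e^{-4t/n}$, which equals $e^{-c}$ exactly when $t = (n/4)(\log k + c)$. Your per-term exponential decay and geometric-sum argument is not needed and, as set up, does not close.

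For the lower bound, keeping the $m=1$ term is correct, but your conclusion ``$\ge 1 - c(1+o(1))$, tightening constants or interpreting asymptotically yields $\ge 1-c$'' leaves a gap for finite $n$. The clean argument is Bernoulli's inequality $(1-x)^r \ge 1 - rx$ with $x = 2/n$, $r = 2t = cn/2$, giving $(1-2/n)^{2t} \ge 1 - 4t/n = 1-c$ directly, with no asymptotics.
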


\begin{proof}
Suppose $\rho = (n - m, m)$ with $m \le k \le \ell$. By Corollary \ref{cor: multCases}(c) $m_\rho^{\lambda} = m_\rho^\mu = 1$, so the multiplicity of $\beta_\rho = 1 - 2m(n + 1 -m)/n^2$ in the transpositions chain on $\T_{\lambda, \mu}$ is $1$. For any other partition $\rho$, the multiplicity is $0$. Thus,
\begin{align*}
\sum_{\mx \in \T_{\lambda, \mu}} \pi(\mx) \chi^2_\mx(t) &= \sum_{\rho \neq (n)} \beta_\rho^{2t} m_\rho^2 \\
&= \sum_{m = 1}^{k} \left( 1 - \frac{2m(n + 1 - m)}{n^2} \right)^{2t} \\
&\le \sum_{m = 1}^{k} \exp \left( -2t \frac{2m(n + 1 - m)}{n^2} \right) \le \exp \left( - \frac{4t}{n} + \log(k) \right),
\end{align*}
which is $\le e^{-c}$ for $t = (n/4)(\log(k) + c)$.

The lower bound comes from using the term in the sum with $\rho = (n - 1, 1)$ (which gives the largest eigenvalue, with multiplicity $1$):
\begin{align*}
\sum_{\mx \in \T_{\lambda, \mu}} \pi(\mx) \chi^2_\mx(t) &\ge \beta_{(n-1, 1)}^{2t} m_{(n-1, 1)}^2 \\
&=  \left( 1 - \frac{2}{n} \right)^{2t} \\
&\ge 1 - \frac{4t}{n} = 1 - c,
\end{align*}
if $t = cn/4$. The last inequality is Bernoulli's inequality.
%Hmm, this is a bit weird, but I think it works?
\end{proof}

\begin{remark}
A table in $\T_{(n-k, k), (n -\ell, \ell)}$ is determined by one entry, say the $(2, 2)$ entry $X$, so the table is
\[
\begin{pmatrix} n - k - \ell + X & \ell - X \cr
k - X & X 
\end{pmatrix}.
\]
Then $X \in \{0, 1, \dots, k \}$ has the uni-variate hypergeometric distribution with parameters $n, k$. The transitions for $X$ are
\begin{align*}
&P(a, a + 1) = \frac{2(k - a)(\ell - a)}{n^2}, \quad a < k \\
&P(a, a - 1) = \frac{2a(n -k - \ell + a)}{n^2},  \quad a > 0.
\end{align*}
\end{remark}

\begin{remark}
In contrast to the previous remark, the Bernoulli-Laplace urn has transitions
\begin{align*}
&P(a, a + 1) = \frac{2(k - a)(\ell - a)}{k(n-k)}, \quad a < k \\
&P(a, a - 1) = \frac{2a(n - k - \ell + a)}{k(n-k)},  \quad a > 0.
\end{align*}
In \cite{diaconisShahshahani1981}, for the special case $k = \ell = n/2$ (which corresponds to the state space $\T_{(n/2, n/2), (n/2, n/2)}$), the Bernoulli-Laplace chain is proven to mix $(n/8) \log(n)$ steps. The paper \cite{nestoridi} studies a more general model in which $k$ balls are swapped between the two urns; the chain is shown to have mixing time $(n/4k)\log(n)$. These are special cases of the random walk on a distance regular graph studied in \cite{belsley}.
%https://link.springer.com/content/pdf/10.1007/s004400050198.pdf
\end{remark}

\begin{remark}
Note that the random transpositions Markov chain on $S_n$ is transitive, and so the behavior of the chain, especially the mixing time, does not depend on the starting state. The chain lumped to $\T_{\lambda, \mu}$ is not transitive, and the mixing time could heavily rely on the starting state. It is expected that the mixing time is fastest starting from states with high probability (large double cosets) and slowest from the states with low probability (small double cosets). From Remark \ref{remark: largest}, the highest probability table is the one closest to the `independence table'; the tables with low probability are the sparse tables with many zeros. 
\end{remark}

\begin{remark}
For any $\lambda, \mu$, suppose $T \in \T_{\lambda, \mu}$. From the table $T$ we can create a $2 \times 2$ table by `collapsing' columns $2:J$ and rows $2:I$. That is, set
\begin{align*}
    &\widetilde{T}(1, 1) = T(1, 1) \\
    &\widetilde{T}(1, 2) = \sum_{j = 2}^{J} T(1, j) \\
    &\widetilde{T}(2, 1) = \sum_{i = 2}^{I} T(i, 1) \\
    &\widetilde{T}(2, 2) = \lambda_2 - \widetilde{T}(2, 1)  = \mu_2 - \widetilde{T}(1, 2).
\end{align*}
Then $\widetilde{T} \in \T_{(\lambda_1, n - \lambda_1), (\mu_1, n - \mu_1)}$. Futhermore,  $T(1, 1)$ has the uni-variate hypergeometric distribution with parameters $\mu_1, \lambda_1$. The random transpositions chain on $\T_{\lambda, \mu}$ lumps to the uni-variate Markov chain on  $\T_{(\lambda_1, n - \lambda_1), (\mu_1, n - \mu_1)}$. Thus the mixing time for the $(1, 1)$-entry Markov chain is a lower bound for the mixing time of the full process.
\end{remark}

%Say something about expect the mixing time to be fastest starting from Independence table, since it has the highest probability??

\subsection{Orthogonal Polynomials} \label{sec: poly}

This section proves part (3) of Theorem \ref{thm: ev}, that the eigenfunctions of the random transpositions chain on contingency tables are orthogonal polynomials. While it is difficult to find an explicit formula for all of these polynomials, analysis of the chain provides a way to calculate the linear and quadratic polynomials. 

%SOS comment why the basis of polynomial eigenfunctions even exists? Persi says to cite Dunkl-Zk book }

\begin{theorem}
Let $\lambda = (\lambda_1, \dots, \lambda_I), \mu = (\mu_1, \dots, \mu_J)$ be partitions of $n$ and $\{T_t \}_{t \ge 0}$ the random transpositions Markov chain on the space of contingency tables $\mathcal{T}_{\lambda, \mu}$ with transition matrix $P$. For any $m$ and $\mx \in \T_{\lambda, \mu}$,
\begin{align*}
\E[T_{t+1}(i, j)^m \mid T_t = \mx] &= x_{ij}^m \left( 1 - \frac{2m(n + 1 - m)}{n^2} \right) + (\text{terms in x of degree} \,\,\, < m ).
\end{align*}
The eigenfunctions for $P$ are polynomials.
%Persi says this also determines the eigenvalue -- but I'm confused, not all values m give eigenvalues?? SOS??
\end{theorem}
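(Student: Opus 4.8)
The plan is to pass to the ``data point'' representation and reduce everything to the conditional law of one random transposition acting on the column-label vector. Write the items as positions $1,\dots,n$, let $R_i\subseteq\{1,\dots,n\}$ be the fixed block of $\lambda_i$ positions carrying row label $i$, and encode the current table by the column-label vector $\mathbf c=(c_1,\dots,c_n)$, where exactly $\mu_j$ of the $c_p$ equal $j$; thus $T_t(i,j)=\#\{p\in R_i:c_p=j\}$. One step replaces $\mathbf c$ by $\tau\mathbf c$ with $\tau\sim Q$ from \eqref{eqn: Q}, and row labels are untouched, so $T_{t+1}(i,j)=\#\{p\in R_i:(\tau\mathbf c)_p=j\}$. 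For $S\subseteq\{1,\dots,n\}$ put $\phi_S^{(j)}(\mathbf c)=\mathbf 1\{S\subseteq c^{-1}(j)\}$. Conditioning on $\tau$ and splitting into the cases $\tau=\mathrm{id}$, $\tau=(ab)$ with $\{a,b\}\cap S=\emptyset$, with $|\{a,b\}\cap S|=1$, and with $\{a,b\}\subseteq S$, a short computation gives
\[
\E[\phi_S^{(j)}(\tau\mathbf c)\mid\mathbf c]=\Big(1-\tfrac{2|S|(n-|S|)}{n^2}\Big)\phi_S^{(j)}(\mathbf c)+\tfrac{2}{n^2}\sum_{a\in S,\,b\notin S}\phi_{(S\setminus\{a\})\cup\{b\}}^{(j)}(\mathbf c).
\]

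\textbf{The moment formula.} Next I would expand the falling factorial $(T_{t+1}(i,j))_{[m]}=\sum \prod_{\ell}\mathbf 1\{(\tau\mathbf c)_{p_\ell}=j\}$ over ordered $m$-tuples of distinct positions $p_\ell\in R_i$, which equals $m!\sum_{S\subseteq R_i,\,|S|=m}\phi_S^{(j)}(\tau\mathbf c)$, apply the displayed identity term by term, and collect. Three bookkeeping facts finish it. (i) $m!\sum_{S\subseteq R_i,|S|=m}\phi_S^{(j)}(\mathbf c)=(x_{ij})_{[m]}$, so the diagonal part gives $\bigl(1-\tfrac{2m(n-m)}{n^2}\bigr)(x_{ij})_{[m]}$. (ii) In $\sum_{a\in S,b\notin S}\phi_{(S\setminus a)\cup b}^{(j)}$, the positions $b$ that remain in $R_i$ reassemble (each target $S'\subseteq R_i$ of size $m$ arising $m(\lambda_i-m)$ times) into $\tfrac{2m(\lambda_i-m)}{n^2}(x_{ij})_{[m]}$, while the positions $b\notin R_i$ with $c_b=j$ (of which there are $\mu_j-x_{ij}$) reassemble into $\tfrac{2m(\lambda_i-m+1)}{n^2}(\mu_j-x_{ij})(x_{ij})_{[m-1]}$, whose degree-$m$ part is $-\tfrac{2m(\lambda_i-m+1)}{n^2}x_{ij}^m$. (iii) Tuples with a repeated position contribute only terms of degree $<m$ in $x_{ij}$. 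Summing (i) and (ii), the coefficient of $x_{ij}^m$ collapses: $1-\tfrac{2m(n-m)}{n^2}+\tfrac{2m(\lambda_i-m)}{n^2}-\tfrac{2m(\lambda_i-m+1)}{n^2}=1-\tfrac{2m(n-m)+2m}{n^2}=1-\tfrac{2m(n+1-m)}{n^2}$, which is $\beta_m$. Converting falling factorials back to powers with Stirling numbers yields the asserted identity (in fact with the lower-order terms being polynomials in $x_{ij}$ alone).

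\textbf{Eigenfunctions are polynomials.} Let $V_m$ be the span in $\ell^2(\pi_{\lambda,\mu})$ of the restrictions to $\T_{\lambda,\mu}$ of polynomials of degree $\le m$ in the entries. Running the same expansion for an arbitrary monomial $\mathbf x^{\mathbf m}$ — the only new feature being that when a position-tuple assigns two positions to distinct columns one also gets a ``value-swap'' term, still supported on $m$ positions — shows that $\E[T_{t+1}^{\mathbf m}\mid T_t=\mathbf x]$ is a polynomial in $\mathbf x$ of degree $\le|\mathbf m|$, i.e. $PV_m\subseteq V_m$ for all $m$. Since the coordinate functions separate the points of the finite set $\T_{\lambda,\mu}$, polynomials span $\ell^2(\pi_{\lambda,\mu})$, so the flag $V_0\subseteq V_1\subseteq\cdots$ stabilizes at $\ell^2(\pi_{\lambda,\mu})$. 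Reversibility makes $P$ self-adjoint on $\ell^2(\pi_{\lambda,\mu})$, hence diagonalizable; a self-adjoint operator preserving a flag preserves each orthogonal complement $V_m\ominus V_{m-1}$, so a full eigenbasis can be chosen inside $\bigcup_m V_m$, i.e. consisting of polynomials, and orthogonalizing the eigenfunctions within each $V_m\ominus V_{m-1}$ with respect to $\pi_{\lambda,\mu}$ produces the orthogonal polynomials of Theorem \ref{thm: ev}(c). In the two-row case the stationary law is multivariate hypergeometric on $\mathbb N^{J-1}$ and the leading coefficient $\beta_{|\mathbf m|}$ is uniform over monomials of a given total degree, so there the conclusion can instead be packaged through Lemma \ref{lem: khare}, recovering the Hahn-polynomial statement of Theorem \ref{thm: 2Jev}.

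\textbf{Main obstacle.} The delicate step is the bookkeeping in the second paragraph: correctly counting, for each shape of position-tuple, the multiplicity with which the relabeling sum redistributes over subsets, and checking that the three separate degree-$m$ contributions cancel to exactly $\beta_m=1-2m(n+1-m)/n^2$ rather than to the ``naive'' value $1-2m(n-m)/n^2$ coming from the diagonal term alone. It is elementary but error-prone, and it is the only place where the precise eigenvalue is pinned down.
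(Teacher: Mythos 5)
Your argument is correct, and it takes a genuinely different route from the paper. The paper proves the moment identity directly from the marginal birth--death transitions \eqref{eqn: t1}--\eqref{eqn: t2}: it writes $\E[T_{t+1}(i,j)^m\mid T_t=\mx]$ as a three-term sum weighted by the up/down/hold probabilities, Taylor-expands $(x_{ij}\pm 1)^m$, and reads off the coefficient of $x_{ij}^m$ after cancellation. You instead lift to the $n$-point representation, express the falling factorial $(T_{t+1}(i,j))_{[m]}$ as $m!\sum_{|S|=m,\,S\subseteq R_i}\phi_S^{(j)}$, push $P$ through the elementary ``stay/relabel'' identity for $\phi_S^{(j)}$, and recombine; the final conversion from falling factorials back to powers is immediate because Stirling coefficients only contribute lower-degree terms. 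Both computations land on $1-2m(n+1-m)/n^2$, and your bookkeeping in (i)--(ii) is correct (I verified the multiplicities $m(\lambda_i-m)$ and $\lambda_i-m+1$ and the final coefficient collapse). Your approach is longer but makes the triangular action on the polynomial filtration manifest, and it shows that the chain acts ``nicely'' on falling-factorial moments first, which is structurally cleaner. For the second claim, the paper simply cites Lemma~\ref{lem: khare}, which already presupposes an orthogonal polynomial basis and requires the leading-coefficient identity for \emph{all} monomials $\mathbf{x}^{\mathbf{m}}$, not just powers of a single entry; you instead give a self-contained spectral argument (invariant polynomial flag $V_0\subseteq V_1\subseteq\cdots$, self-adjointness of $P$ on $\ell^2(\pi_{\lambda,\mu})$, hence $P$ stabilizes each $V_m\ominus V_{m-1}$ and can be diagonalized there). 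That is a nicer route in that it does not assume an orthogonal basis exists. You do share with the paper one elision: both only \emph{display} the diagonal moment $\E[T_1(i,j)^m]$, while the flag-invariance $PV_m\subseteq V_m$ (equivalently the hypothesis of Lemma~\ref{lem: khare}) needs the analogous statement for arbitrary mixed monomials. You at least flag this explicitly and sketch how the same tuple expansion handles the ``value-swap'' cross terms, which is a step up in care over the paper, though a fully written-out version would want that cross-monomial computation spelled out (the paper's Lemma~\ref{lem: secondMoments} does it for degree two).
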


\begin{proof}
Let $\{T_t \}_{t \ge 0}$ represent the Markov chain on $\T_{\lambda, \mu}$, with $T_t(i, j)$ denoting the value in the $(i, j)$ cell. Let $\mx \in \T_{\lambda, \mu}$. Marginally, each entry of the table is a birth-death process: For any $1 \le i \le I, 1 \le j \le J$,
\begin{align}
& \P\left( T_{t+1}(i, j) = x_{ij} + 1 \mid T_t = \mx \right) = \sum_{\ell \neq j} \sum_{k \neq i} \frac{2 \cdot  x_{i \ell} x_{kj}}{n^2} = \frac{2(\lambda_i - x_{ij})(\mu_j - x_{ij})}{n^2} \label{eqn: t1}\\
& \P\left( T_{t+1}(i, j) = x_{ij} - 1 \mid T_t = \mx \right) = \sum_{\ell \neq j} \sum_{k \neq i} \frac{2 x_{ij} x_{kl}}{n^2} = \frac{2x_{ij}(n - \lambda_i - \mu_j + x_{ij})}{n^2}. \label{eqn: t2}
\end{align}
These transitions allow for the calculation of $\E[T_{t+1}(i, j)^m \mid T_t = \mx]$, for any integer power $m$. That is,
\begin{align*}
\E[T_{t+1}(i, j)^m \mid T_t = \mx] &= (x_{ij} + 1)^m \cdot \frac{2(\lambda_i - x_{ij})(\mu_j - x_{ij})}{n^2} + (x_{ij} - 1)^m \cdot \frac{2x_{ij}(n - \lambda_i - \mu_j + x_{ij})}{n^2} \\
& \,\,\,\,\,\,\,\,\,\, + x_{ij}^m \left( 1 - \frac{2x_{ij}(n - \lambda_i - \mu_j + x_{ij})}{n^2} - \frac{2(\lambda_i - x_{ij})(\mu_j - x_{ij})}{n^2} \right) \\
&= x_{ij}^m + \left( m x_{ij}^{m-1} + \frac{m(m -1)}{2} x_{ij}^{m -2} + \sum_{\ell = 3}^m \binom{m}{\ell} x_{ij}^{m - \ell} \right) \cdot \frac{2(\lambda_i - x_{ij})(\mu_j - x_{ij})}{n^2} \\
& \,\,\,\,\,\,\,\,\,\, + \left(-m x_{ij}^{m-1} + \frac{m(m - 1)}{2} x_{ij}^{m-2}  + \sum_{\ell = 3}^m \binom{m}{\ell} x_{ij}^{m - \ell} (-1)^{\ell} \right) \cdot \frac{2x_{ij}(n - \lambda_i - \mu_j + x_{ij})}{n^2} \\
&= x_{ij}^m \left( 1 - \frac{2m(n + 1 - m)}{n^2} \right) + (\text{terms in x of degree} \,\,\, < m ).
\end{align*}
By Lemma \ref{lem: khare}, this condition means the eigenfunctions for $P$ are polynomials. 
\end{proof}

Straightforward calculation of the transition probabilities gives the linear eigenfunctions:

\begin{lemma} \label{lem: linearEF}
For any $\lambda, \mu$ and $1 \le i \le I, 1 \le j \le J$, the functions
\[
f_{ij}(\mx) := x_{ij} - \frac{\lambda_i \mu_j}{n}
\]
are eigenvectors with eigenvalue $1 - 2/n$. 
\end{lemma}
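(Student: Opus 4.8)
The plan is to verify directly that $f_{ij}(\mx) = x_{ij} - \lambda_i \mu_j / n$ satisfies $\E[f_{ij}(T_{t+1}) \mid T_t = \mx] = (1 - 2/n) f_{ij}(\mx)$ for every $\mx \in \T_{\lambda,\mu}$. Since the chain is reversible with respect to $\pi_{\lambda,\mu}$, producing a function $f$ and a scalar $\beta$ with $Pf = \beta f$ identifies $f$ as an eigenfunction with eigenvalue $\beta$, so this is all that is needed.

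The key computational input is already available from equations~\eqref{eqn: t1} and~\eqref{eqn: t2} in the preceding theorem's proof, which give the one-step birth and death probabilities for the marginal process $T_t(i,j)$. First I would use these to compute
\[
\E[T_{t+1}(i,j) \mid T_t = \mx] = x_{ij} + \frac{2(\lambda_i - x_{ij})(\mu_j - x_{ij})}{n^2} - \frac{2x_{ij}(n - \lambda_i - \mu_j + x_{ij})}{n^2}.
\]
The next step is the routine but crucial simplification of the right-hand side: expanding both products, the quadratic terms $x_{ij}^2$ cancel, and collecting the remaining terms one finds the drift equals $\frac{2}{n^2}\bigl(\lambda_i \mu_j - n x_{ij}\bigr)$, so that
\[
\E[T_{t+1}(i,j) \mid T_t = \mx] = x_{ij} - \frac{2}{n}\left(x_{ij} - \frac{\lambda_i \mu_j}{n}\right) = \left(1 - \frac{2}{n}\right) x_{ij} + \frac{2}{n} \cdot \frac{\lambda_i \mu_j}{n}.
\]
Subtracting the constant $\lambda_i \mu_j / n$ from both sides then gives $\E[f_{ij}(T_{t+1}) \mid T_t = \mx] = (1 - 2/n) f_{ij}(\mx)$, which is exactly the eigenfunction relation.

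There is essentially no obstacle here beyond bookkeeping: the whole content is the cancellation of the quadratic terms, which happens because the two transition rates are quadratic in $x_{ij}$ with matching leading coefficients $-2/n^2$. (One could also note that the constant $\lambda_i \mu_j / n$ is forced by $\E_{\pi_{\lambda,\mu}}[T_{ij}] = \lambda_i \mu_j / n$, computed in Section~\ref{sec: FYdistribution}, since any eigenfunction with eigenvalue $\neq 1$ must be orthogonal to the constants, i.e.\ have mean zero under $\pi_{\lambda,\mu}$.) The one point worth a remark is that these $IJ$ functions are linearly dependent — the row and column sum constraints give $\sum_j f_{ij} = 0$ and $\sum_i f_{ij} = 0$ — so they span a space of dimension at most $(I-1)(J-1)$, consistent with the multiplicity $m_{(n-1,1)} = (I-1)(J-1)$ from Corollary~\ref{cor: multCases}(a); extracting an actual orthonormal eigenbasis from them is a separate (linear-algebra) matter not needed for the statement as phrased.
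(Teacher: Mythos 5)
Your proof is correct and follows essentially the same route as the paper: both compute $\E[T_1(i,j)\mid T_0=\mx]$ from the marginal birth--death rates \eqref{eqn: t1}--\eqref{eqn: t2}, simplify to $(1-2/n)x_{ij}+2\lambda_i\mu_j/n^2$, and subtract the constant $\lambda_i\mu_j/n$ to obtain the eigenfunction relation. The extra remarks about linear dependence of the $f_{ij}$ and the mean-zero condition are correct but not part of the paper's argument.
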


\begin{proof}
For $\mx \in \T_{\lambda, \mu}$, the expected value of one entry of the table after one step of the chain can be computed using \eqref{eqn: t1} and \eqref{eqn: t2}:
\begin{align}
\E[T_1(i, j) \mid T_0 = \mx] &= (x_{ij} + 1)\cdot \frac{2(\lambda_i - x_{ij})(\mu_j - x_{ij})}{n^2} + (x_{ij} - 1) \cdot \frac{2x_{ij}(n - \lambda_i - \mu_j + x_{ij})}{n^2} \notag \\ 
& \,\,\,\,\,\,\,\,\,\, + x_{ij} \left( 1 - \frac{2x_{ij}(n - \lambda_i - \mu_j + x_{ij})}{n^2} - \frac{2(\lambda_i - x_{ij})(\mu_j - x_{ij})}{n^2} \right) \notag \\ 
&= \frac{2(\lambda_i - x_{ij})(\mu_j - x_{ij})}{n^2} - \frac{2x_{ij}(n - \lambda_i - \mu_j + x_{ij})}{n^2} + x_{ij} \notag \\ 
&= x_{ij} \left( 1 - \frac{2}{n} \right) + \frac{2 \lambda_i \mu_j}{n^2}. \label{eqn: linExpectation}
\end{align}
Thus,
\begin{align*}
    \E[f_{ij}(T_1) \mid T_0 = \mx] &= x_{ij} \left( 1 - \frac{2}{n} \right) + \frac{2 \lambda_i \mu_j}{n^2} - \frac{\lambda_i \mu_j}{n} \\
    &= \left(1 - \frac{2}{n}\right)\left(x_{ij} - \frac{\lambda_i \mu_j}{n} \right) = \left(1 - \frac{2}{n}\right) \cdot f_{ij}(\mx).
\end{align*}
\end{proof}

\begin{remark}
The set $\{f_{ij} \}_{1 \le i \le I -1, 1 \le j \le J-1 }$ of eigenvectors from Lemma \ref{lem: linearEF} is a basis for the eigenspace with eigenvalue $1 - 2/n$ (as Corollary \ref{cor: multCases} (a) showed the multiplicity of $1-2/n$ is $(I-1)(J-1)$). The current version of the functions $f_{ij}$ are \emph{not} orthogonal with respect to $\pi_{\lambda, \mu}$, because
\begin{align} \label{eqn: crossMoments}
    \E_{\pi_{\lambda, \mu}}[T_{ij}T_{kl}] = \begin{cases}
    \frac{\lambda_i \mu_j \lambda_k \mu_l}{n(n-1)} & \text{if} \quad i \neq k, j \neq l \\
    \frac{\lambda_i \mu_j \lambda_k (\mu_j - 1)}{n(n-1)} & \text{if} \quad i \neq k, j = l \\
    \frac{\lambda_i^2 \mu_j^2}{n^2} + \frac{\lambda_i \mu_j(n - \lambda_i)(n - \mu_j)}{n^2(n-1)} & \text{if} \quad i = j, k = l
    \end{cases}.
\end{align}
If $i \neq k, j \neq l$, then
\begin{align*}
    \E_{\pi_{\lambda, \mu}}[f_{i, j}(T)f_{k, l}(T)] &= \frac{\lambda_i \mu_j \lambda_k \mu_l}{n(n-1)} - 2 \frac{\lambda_i \mu_j \lambda_k \mu_l}{n^2} + \frac{\lambda_i \mu_j \lambda_k \mu_l}{n^2} \\
    &= \frac{\lambda_i \mu_j \lambda_k \mu_l}{n} \left(  \frac{1}{n(n-1)} \right).
\end{align*}
\end{remark}

To find the quadratic eigenvectors, the following computations are needed. 
\begin{lemma} \label{lem: secondMoments}
For any $\lambda, \mu$, $1 \le i, k \le I, 1 \le j, l \le J$ and $\mx \in \T_{\lambda, \mu}$,
\begin{equation}
\E\left[T_1(i, j)T_1(k, l) \mid T_0 = \mx \right] = \begin{cases} x_{ij}x_{kl} \left(1 - \frac{4}{n} + \frac{2}{n^2} \right)  + \frac{2}{n^2} \left( x_{kl} \lambda_i \mu_j + x_{ij} \lambda_k \mu_l + x_{il} x_{kj}  
\right) & i \neq k, j \neq l \\
x_{ij}x_{kj} \left(1 - \frac{4}{n} + \frac{4}{n^2} \right)  + \frac{2}{n^2} \left( x_{kj} ( \lambda_i \mu_j - \lambda_i) + x_{ij} (\lambda_k \mu_j  - \lambda_k) \right) & i \neq k, j = l \\
x_{ij}^2\left( 1 - \frac{4}{n} + \frac{4}{n^2} \right) + \frac{2}{n^2} \left( x_{ij}(2 \lambda_i \mu_j - 2 \lambda_i - 2 \mu_j + n ) + \lambda_i \mu_j \right) & i = k, j = l 
\end{cases}.
\end{equation}
\end{lemma}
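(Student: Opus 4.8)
The plan is to compute $\E[T_1(i,j)T_1(k,l)\mid T_0=\mx]$ directly by enumerating the possible single-step transitions that affect the pair of cells $(i,j)$ and $(k,l)$, and tracking how each move changes the product $T_1(i,j)T_1(k,l)$. Recall that one step adds a submatrix $\begin{pmatrix}+1&-1\\-1&+1\end{pmatrix}$ supported on rows $\{i_1,i_2\}$ and columns $\{j_1,j_2\}$, with probability $2x_{i_1,j_1}x_{i_2,j_2}/n^2$ (for the ordered choice), plus a holding probability. So I would fix the target cells and split into the three cases according to how much the two cells overlap.

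First, for the case $i=k$, $j=l$: here I only need the marginal birth-death dynamics of the single entry $T(i,j)$, already recorded in \eqref{eqn: t1} and \eqref{eqn: t2}. Computing $\E[T_1(i,j)^2\mid T_0=\mx]$ is then the $m=2$ specialization of the displayed second-moment formula proved just above (the one with $1-\tfrac{2m(n+1-m)}{n^2}$), and I would just carry out that quadratic expansion carefully to land on $x_{ij}^2(1-\tfrac4n+\tfrac4{n^2})+\tfrac2{n^2}(x_{ij}(2\lambda_i\mu_j-2\lambda_i-2\mu_j+n)+\lambda_i\mu_j)$.

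For the two genuinely bivariate cases I would classify the chosen move $F_{(i_1,j_1),(i_2,j_2)}$ by which of the four affected cells coincide with $(i,j)$ or $(k,l)$. When $i\neq k$, $j\neq l$, the relevant moves are: moves changing only $(i,j)$ (i.e.\ $(i,j)$ is a corner of the submatrix but $(k,l)$ is not), moves changing only $(k,l)$, and the special moves that hit both cells at once — a $\pm$ at $(i,j)$ and a $\pm$ at $(k,l)$, which forces the other two corners to be $(i,l)$ and $(k,j)$. In each case I record the increment $\Delta(T(i,j)T(k,l))$ and multiply by the transition probability; summing, with the help of the marginal identities $\sum_{\ell\neq j}x_{i\ell}=\lambda_i-x_{ij}$ and $\sum_{k'\neq i}x_{k'j}=\mu_j-x_{ij}$ to collapse the double sums, should yield the stated closed form. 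The case $i\neq k$, $j=l$ (two cells in the same column) is analogous, but now the only ``simultaneous'' move is the one with columns $\{j,j'\}$ and rows $\{i,k\}$, which changes $(i,j)$ and $(k,j)$ in opposite directions; this is why the coefficient becomes $1-\tfrac4n+\tfrac4{n^2}$ and the cross-term $x_{il}x_{kj}$ from the previous case disappears.

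The main obstacle is purely bookkeeping: correctly accounting for every move that touches at least one of the two cells (including the holding term and the moves that shift one cell up while leaving the other fixed), keeping the signs of $\pm1$ straight in $\Delta(T(i,j)T(k,l))$, and not double-counting the ordered-versus-unordered choice of $(i_1,j_1),(i_2,j_2)$ — the factor of $2$ in $P(T,T')$ already handles the ordering, so each unordered move should be counted once with that weight. Once the transitions are correctly tabulated, the algebra is a routine collapse of sums using the row/column-sum constraints, and matching the three displayed expressions term by term finishes the proof.
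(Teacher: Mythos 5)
Your proposal matches the paper's own proof: in each of the three cases ($i=k,j=l$; $i\neq k, j=l$; $i\neq k, j\neq l$) the paper enumerates the moves that change $(i,j)$ only, $(k,l)$ only, or both simultaneously (noting that in the first bivariate case the two cells must be opposite corners of the chosen submatrix, and in the same-column case the two cells necessarily change in opposite directions), multiplies by the transition probabilities from \eqref{eqn: t1}--\eqref{eqn: t2} and the box probabilities, and collapses the sums with the row/column-margin identities. The only minor inaccuracy in your sketch is the remark that the cross-term $x_{il}x_{kj}$ ``disappears'' in the $j=l$ case; it actually merges into $x_{ij}x_{kj}$ (which is consistent with the coefficient changing from $1-\tfrac{4}{n}+\tfrac{2}{n^2}$ to $1-\tfrac{4}{n}+\tfrac{4}{n^2}$), but this is a side observation and not a step in the argument.
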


\begin{proof}
For the first case, if $i \neq k$ and $j \neq l$ then the only situation in which both the $(i, j)$ and $(k, l)$ entries are chosen is if they are opposite corners of the box chosen for the swap move. 
\begin{comment}
\begin{align*}
    \E \left[ T_1(i, j)T_1(k, l)\right. & \left. \mid T_0 = \mx \right] = (x_{ij} - 1)(x_{kl} - 1) \frac{2x_{ij}x_{kl}}{n^2} + (x_{ij} + 1)(x_{kl} + 1) \frac{2x_{il}x_{kj}}{n^2} \\
    &+ (x_{ij} - 1)x_{kl} \frac{2x_{ij}(n - \lambda_i - \mu_j + x_{ij} - x_{kl})}{n^2} 
    + (x_{ij} + 1)x_{kl} \frac{2((\lambda_i - x_{ij})(\mu_j - x_{ij}) - x_{il}x_{ij})}{n^2} \\
    &+ x_{ij}(x_{kl}-1)\frac{2x_{kl}(n - \lambda_k - \mu_l + x_{kl} - x_{ij})}{n^2} + x_{ij}(x_{kl}+1)\frac{2((\lambda_k - x_{kl})(\mu_l - x_{kl}) - x_{il}x_{kj})}{n^2} \\
    &= (x_{ij} - 1)(x_{kl} - 1) \frac{2x_{ij}x_{kl}}{n^2} + (x_{ij} + 1)(x_{kl} + 1) \frac{2x_{il}x_{kj}}{n^2} \\
    &+ x_{kl} \left( x_{ij} \left( 1 - \frac{2}{n} \right) + \frac{2 \lambda_i \mu_j}{n^2} \right) - \frac{2 x_{ij}(x_{ij} - 1) x_{kl}^2}{n^2} - \frac{2x_{il} x_{kj}x_{kl}(x_{ij} + 1)}{n^2} \\
    &+x_{ij} \left( x_{kl} \left( 1 - \frac{2}{n} \right) + \frac{2 \lambda_k \mu_l}{n^2} \right) - \frac{2 x_{kl}(x_{kl} - 1) x_{ij}^2}{n^2} - \frac{2x_{il} x_{kj}x_{ij}(x_{kl} + 1)}{n^2} \\
    &= 
\end{align*}
\end{comment}
In the following calculation, \eqref{eqn: line1} is the case where both the $(i, j)$ and $(k, l)$ entries change, \eqref{eqn: line2} is the case where only the $(i, j)$ entry changes, and \eqref{eqn: line3} is the case where only the $(k, l)$ entry changes.
\begin{align}
    \E \left[ T_1(i, j)T_1(k, l) \right. & \left. - x_{ij} x_{kl}  \mid T_0 = \mx \right] = (-x_{ij} - x_{kl} + 1) \frac{2x_{ij}x_{kl}}{n^2} + (x_{ij} + x_{kl} + 1) \frac{2x_{il}x_{kj}}{n^2} \label{eqn: line1}\\
    &-x_{kl} \frac{2x_{ij}(n - \lambda_i - \mu_j + x_{ij} - x_{kl})}{n^2} 
    + x_{kl} \frac{2((\lambda_i - x_{ij})(\mu_j - x_{ij}) - x_{il}x_{kj})}{n^2} \label{eqn: line2} \\
    &- x_{ij}\frac{2x_{kl}(n - \lambda_k - \mu_l + x_{kl} - x_{ij})}{n^2} + x_{ij}\frac{2((\lambda_k - x_{kl})(\mu_l - x_{kl}) - x_{il}x_{kj})}{n^2} \label{eqn: line3} \\
    &=\frac{2}{n^2} \left( x_{ij} x_{kl} - 2n x_{ij} x_{kl} + x_{ij} \lambda_k \mu_l + x_{kl} \lambda_i \mu_j + x_{il} x_{kj} \right) \notag 
\end{align}

Now suppose $i \neq k$ and $j = l$. In this case the $(i, j)$ and $(k, l)$ entries of the table could only both change if one increases and the other decreases. This gives
\begin{align}
    \E \left[ T_1(i, j)T_1(k, j) \right. & \left. - x_{ij} x_{kj}  \mid T_0 = \mx \right] = (x_{ij} - x_{kj} - 1) \frac{2x_{ij}(\lambda_k - x_{kj})}{n^2} + (-x_{ij} + x_{kj} - 1) \frac{2x_{kj}(\lambda_i - x_{ij})}{n^2} \\
    &-x_{kj} \frac{2x_{ij}(n - \lambda_i - \lambda_k - \mu_j + x_{ij} + x_{kj})}{n^2} 
    + x_{kj} \frac{2(\lambda_i - x_{ij})(\mu_j - x_{ij} - x_{kj})}{n^2}  \\
    &- x_{ij}\frac{2x_{kj}(n - \lambda_k - \lambda_i - \mu_j + x_{kj} + x_{ij})}{n^2} + x_{ij}\frac{2(\lambda_k - x_{kj})(\mu_j - x_{kj} - x_{ij})}{n^2} \\
    &= \frac{2}{n^2} \left(2 x_{ij}x_{kj} - 2n x_{ij}x_{kj} - \lambda_k x_{ij} - \lambda_i x_{kj} + x_{kj} \lambda_i \mu_j + x_{ij} \lambda_k \mu_j \right). \notag
\end{align}
%Checked on paper. 

Finally for the case $i = k, j = l$, this is the second moment of a birth death process. Using the transitions \eqref{eqn: t1} and \eqref{eqn: t2}, the calculation is
\begin{align*}
\E[T_1(i, j)^2 \mid T_0 = \mx ] &= (x_{ij} + 1 )^2 \left( \frac{2(\lambda_i - x_{ij})(\mu_j - x_{ij})}{n^2} \right) + (x_{ij} - 1)^2 \left( \frac{2x_{ij}(n - \lambda_i - \mu_j + x_{ij})}{n^2} \right) \\
& \,\,\,\,\,\,\,\,\,\,\, + x_{ij}^2 \P(T_1(i, j) = x_{ij} \mid T_0 = \mx) \\
&= x_{ij}^2\left( 1 - \frac{4}{n} + \frac{4}{n^2} \right) + \frac{2}{n^2} \left( x_{ij}(2 \lambda_i \mu_j - 2 \lambda_i - 2 \mu_j + n ) + \lambda_i \mu_j \right).
\end{align*}
\end{proof}

%NTDNN can I say these are the only eigenfunctions? I.e. what is multiplicity of this eigenvalue??}
\begin{lemma}[Quadratic Eigenfunctions]
Let $\lambda, \mu$ be partitions of $n$ with $|\lambda| = I, |\mu| = J$. Let $1 \le i, k \le I, 1 \le j, l \le J$ with $i \neq k, j \neq l$. For the Markov chain on $\T_{\lambda, \mu}$, the following functions are eigenfunctions with eigenvalue $1 - 4/n + 4/n^2$, defined for $\mx \in \T_{\lambda, \mu}$:
\begin{enumerate}[(a)]
    \item \begin{align*}
f_{(i,j), (k, l)}(\mx) &:= x_{ij}x_{kl}  - x_{ij} \frac{\lambda_k \mu_l}{n - 2} - x_{kl} \frac{\lambda_i \mu_j}{n - 2}  \\
&\,\,\,\,\,\,\, + x_{il}x_{kj} - x_{il} \frac{\lambda_k \mu_j}{n - 2} - x_{kj} \frac{\lambda_i \mu_l}{n - 2} + \frac{2\lambda_k \mu_l \lambda_i \mu_j}{(n-1)(n-2)}.
\end{align*}

\item \begin{align*}
    f_{(i, j), (k, j)}(\mx) &= x_{ij} x_{kj} - x_{ij} \frac{\lambda_i(\mu_j - 1)}{n - 2} - x_{kj} \frac{\lambda_k(\mu_j - 1)}{n-2} + \frac{\lambda_i \lambda_k \mu_j( \mu_j - 1)}{(n-1)(n-2)}.
\end{align*}

\item 
\begin{align*}
     f_{(i, j), (i, j)}(\mx)  &= x_{ij}^2 - x_{ij} \frac{2 \lambda_i \mu_j - 2 \lambda_i - 2 \mu_j + n}{n - 2} + \frac{\lambda_i \mu_j(1 + \lambda_i \mu_j - \lambda_i - \mu_j)}{(n - 1)(n-2)}.
\end{align*}
\end{enumerate}

\end{lemma}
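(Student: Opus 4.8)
The plan is to verify each of the three identities by applying the one-step transition operator $P$ directly, using the second-moment formulas of Lemma~\ref{lem: secondMoments} together with the linear expectation \eqref{eqn: linExpectation}. The structural point that makes this clean is that $P$ preserves the filtration of functions on $\T_{\lambda,\mu}$ by total degree in the entries $x_{ij}$ and is \emph{triangular} with respect to it: by Lemma~\ref{lem: secondMoments} a degree-two monomial is sent to $\beta := 1 - 4/n + 4/n^2$ times a sum of degree-two monomials, plus strictly lower-degree terms; by \eqref{eqn: linExpectation} a degree-one monomial $x_{ij}$ is sent to $(1 - 2/n)\,x_{ij}$ plus the constant $2\lambda_i\mu_j/n^2$; and constants are fixed. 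Since $\beta$, $1 - 2/n$, and $1$ are pairwise distinct for $n\ge 3$, once the degree-two leading part is chosen to be an eigenvector (with eigenvalue $\beta$) for the action $P$ induces on homogeneous quadratics, the lower-degree corrections that turn it into an honest $\beta$-eigenfunction of $P$ exist and are unique; the proof amounts to exhibiting them degree by degree.

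Concretely, for part (a) I would start from the two quadratic monomials $x_{ij}x_{kl}$ and $x_{il}x_{kj}$. Lemma~\ref{lem: secondMoments} (first case) shows that $P$ maps each of these to $(1 - 4/n + 2/n^2)$ times itself plus $\tfrac{2}{n^2}$ times the other, plus an explicit linear polynomial; hence the symmetric combination $q := x_{ij}x_{kl} + x_{il}x_{kj}$ satisfies $Pq = \beta q + L$ with $L = \tfrac{2}{n^2}\bigl(x_{kl}\lambda_i\mu_j + x_{ij}\lambda_k\mu_l + x_{kj}\lambda_i\mu_l + x_{il}\lambda_k\mu_j\bigr)$. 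To remove $L$, subtract a linear combination of the four variables; since $P$ acts on $x_{ij}$ as $(1-2/n)x_{ij} + 2\lambda_i\mu_j/n^2$ by \eqref{eqn: linExpectation}, matching the coefficient of $x_{ij}$ in $Pf = \beta f$ reduces to the scalar identity $\beta - (1-2/n) = -\tfrac{2(n-2)}{n^2}$, which forces the correction coefficient $\tfrac{\lambda_k\mu_l}{n-2}$ on $x_{ij}$ and the symmetric ones on $x_{kl}, x_{il}, x_{kj}$. These linear corrections then generate a single constant error through the $2\lambda_i\mu_j/n^2$ terms, and adding the constant $\tfrac{2\lambda_i\mu_j\lambda_k\mu_l}{(n-1)(n-2)}$ cancels it, using $\beta - 1 = -\tfrac{4(n-1)}{n^2}$. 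The result is exactly $f_{(i,j),(k,l)}$, so it is a $\beta$-eigenfunction.

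Parts (b) and (c) follow the same three-step recipe, starting instead from the second and third cases of Lemma~\ref{lem: secondMoments}. In (b) there is no quadratic cross term to recombine: $P$ already maps $x_{ij}x_{kj}$ to $\beta\,x_{ij}x_{kj}$ plus linear terms, and one corrects by a combination of $x_{ij}$ and $x_{kj}$ and then a constant. In (c) one works with the single monomial $x_{ij}^2$, whose image under $P$ is the birth--death second moment, again with quadratic part $\beta\,x_{ij}^2$; here the degree-two image also carries a constant, but it is still removed by the same constant-matching step. In each case the linear and constant corrections are pinned down by the same two scalar identities for $\beta - (1-2/n)$ and $\beta - 1$, yielding the displayed coefficients. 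That these polynomial eigenfunctions can be taken orthogonal with respect to $\pi_{\lambda,\mu}$ is then immediate from Theorem~\ref{thm: ev}(c) (equivalently Lemma~\ref{lem: khare}).

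I expect the only real difficulty to be organizational rather than conceptual: correctly tracking which of the four entries $x_{ij}, x_{kl}, x_{il}, x_{kj}$ each lower-order term is attached to, and checking that the linear error $L$ in case (a) is precisely a combination of the linear eigenfunctions of Lemma~\ref{lem: linearEF}, so that subtracting it leaves the quadratic leading term untouched. Once the triangular structure and the nonvanishing of $\beta - (1-2/n)$ and $\beta - 1$ are in hand, existence and uniqueness of the corrections are automatic and the remaining computation is routine; I would present the degree-by-degree matching as the main idea and relegate the coefficient arithmetic to a short verification.
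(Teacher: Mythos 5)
Your proposal is correct and follows essentially the same route as the paper's proof: both rest entirely on the second-moment formulas of Lemma~\ref{lem: secondMoments} together with the one-step linear expectation \eqref{eqn: linExpectation}, and both match coefficients degree by degree. The only distinction is presentational: the paper plugs the given polynomials into $\E[f(T_1)\mid T_0=\mx]$ and \emph{verifies} that the quadratic, linear, and constant pieces each scale by $\beta=1-4/n+4/n^2$, whereas you \emph{derive} the lower-order corrections from the quadratic leading term using the scalar identities $\beta-(1-2/n)=-2(n-2)/n^2$ and $\beta-1=-4(n-1)/n^2$, observing that distinctness of the eigenvalues $\beta$, $1-2/n$, $1$ (for $n\ge 3$) makes the corrections exist and be unique. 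That observation is a pleasant conceptual framing of the same arithmetic, and your worked-through case (a) reproduces exactly the cancellations carried out in the paper; parts (b) and (c) go through as you sketch. One small caution in your wording: the image of a single quadratic monomial $x_{ij}x_{kl}$ ($i\ne k$, $j\ne l$) under $P$ is not $\beta$ times a monomial but $(1-4/n+2/n^2)x_{ij}x_{kl}+\tfrac{2}{n^2}x_{il}x_{kj}$ plus lower-order terms; it is only the symmetric combination $x_{ij}x_{kl}+x_{il}x_{kj}$ that is a $\beta$-eigenvector of the induced action on quadratics, which you do use correctly in the next step, so this is a matter of phrasing rather than substance.
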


\begin{remark}
Using Equation \ref{eqn: crossMoments}, one can check that $\E_{\pi_{\lambda, \mu}}[f_{(i, j), (k, l)}(T)] = 0$, for any $i, j, k, l$.
%Note: I checked with the probability $1/\binom{n}{2}$ and it gives the same polynomials!! Unless I'm missing something....
%The format of  I tried to write the i,j;k,l eigenfunction as a kind of correlation, something like (x(i,j)-lambda(i)mu(j)/n) (x(k,l)-lambda(k)mu(l)/n) but it was a bit of a mess. That's what made me want to know if you had it exactly correct (probably). even the diagonal term x(i,i)^2 resisted. Have a look and check around.
\end{remark}

\begin{proof}
%Could put in appendix of thesis? 
The results follow from straightforward calculations using Lemma \ref{lem: secondMoments} and Equation \ref{eqn: linExpectation}. As an illustration, (a) is computed:  In $\E[f_{(i, j), (k, l)}(T_1) \mid T_0 = \mx]$, the degree $2$ terms will be
\begin{align*}
    x_{ij}x_{kl} \left(1 - \frac{4}{n} + \frac{2}{n^2} \right) + \frac{2}{n^2} x_{il} x_{kj} + x_{il}x_{kj} \left(1 - \frac{4}{n} + \frac{2}{n^2} \right) + \frac{2}{n^2} x_{ij} x_kl =  \left(1 - \frac{4}{n} + \frac{4}{n^2} \right) \left( x_{ij}x_{kl} + x_{il} x_{kj} \right).
\end{align*}
Degree $1$ terms arise from the expectation of six of the terms in $f_{(i, j), (k,l)}(T_1)$:
\begin{align*}
    &\E[T_1(i, j)T_1(k, l) \mid T_0 = \mx] \to \frac{2}{n^2} \left( x_{kl} \lambda_i \mu_j + x_{ij} \lambda_k \mu_l \right) \\
    &\E[T_1(i, l)T_1(k, j) \mid T_0 = \mx] \to \frac{2}{n^2} \left( x_{kj} \lambda_i \mu_l + x_{il} \lambda_k \mu_j \right) \\
    &\E\left[- T_1(i, j) \frac{\lambda_k \mu_l}{n - 2} \mid T_0 = \mx \right] \to - \left( 1 - \frac{2}{n}  \right) x_{ij} \frac{\lambda_k \mu_l}{n - 2}  \\
    &\E\left[- T_1(k, l) \frac{\lambda_i \mu_j}{n - 2} \mid T_0 = \mx \right] \to - \left( 1 - \frac{2}{n}  \right) x_{kl} \frac{\lambda_i \mu_j}{n - 2} \\
    &\E\left[- T_1(i, l) \frac{\lambda_k \mu_j}{n - 2} \mid T_0 = \mx \right] \to - \left( 1 - \frac{2}{n}  \right) x_{il} \frac{\lambda_k \mu_j}{n - 2} \\
    &\E\left[- T_1(k, j) \frac{\lambda_i \mu_l}{n - 2} \mid T_0 = \mx \right] \to - \left( 1 - \frac{2}{n}  \right) x_{kj} \frac{\lambda_i \mu_l}{n - 2} 
\end{align*}
Collecting the $x_{ij}$ terms gives
\begin{align*}
    x_{ij} \left( \frac{2}{n^2} \lambda_k \mu_l - \frac{1}{n} \lambda_k \mu_l \right) &= (n-2) \left( \frac{2}{n^2} - \frac{1}{n} \right) x_{ij} \frac{\lambda_k \mu_l}{n - 2} \\
    &=- \left( 1 - \frac{4}{n} + \frac{4}{n^2}\right)x_{ij} \frac{\lambda_k \mu_l}{n - 2},
\end{align*}
and the computation is the same for the other linear terms. Finally, the constant terms arise from:
\begin{align*}
    &\E\left[- T_1(i, j) \frac{\lambda_k \mu_l}{n - 2} \mid T_0 = \mx \right] \to - \frac{2 \lambda_i \mu_j}{n^2} \cdot \frac{\lambda_k \mu_l}{n - 2}  \\
    &\E\left[- T_1(k, l) \frac{\lambda_i \mu_j}{n - 2} \mid T_0 = \mx \right] \to - \frac{2 \lambda_k \mu_l}{n^2} \cdot \frac{\lambda_i \mu_j}{n - 2} \\
    &\E\left[- T_1(i, l) \frac{\lambda_k \mu_j}{n - 2} \mid T_0 = \mx \right] \to - \frac{2 \lambda_i \mu_l}{n^2} \cdot \frac{\lambda_k \mu_j}{n - 2} \\
    &\E\left[- T_1(k, j) \frac{\lambda_i \mu_l}{n - 2} \mid T_0 = \mx \right] \to - \frac{2 \lambda_k \mu_j}{n^2} \cdot  \frac{\lambda_i \mu_l}{n - 2}. 
\end{align*}
This gives
\begin{align*}
    \frac{2\lambda_k \mu_l \lambda_i \mu_j}{(n-1)(n-2)} - \frac{8 \lambda_k \mu_l \lambda_i \mu_j}{n^2(n-2)} = \left( 1 - \frac{4}{n} + \frac{4}{n^2}\right)\frac{2\lambda_k \mu_l \lambda_i \mu_j}{(n-1)(n-2)}.
\end{align*}
Further details are omitted. 
\end{proof}

\section{Mixing Time} \label{sec: mixingTime}

This section contains results on the mixing time for special cases. Throughout, keep in mind that random transpositions on $S_n$ has mixing time $(n/2)\log(n)$, which is an upper bound for the mixing time on $\T_{\lambda, \mu}$. The question then is: Does the lumping speed up mixing, and if so by what order? 

\subsection{Upper Bound for $2 \times J$ tables}

Due to the complicated nature of the orthogonal polynomials, the upper bound can only be analyzed for the specific starting states for which the kernel polynomials simplify. Suppose $\lambda = (n - k, k), \mu = (\mu_1, \dots, \mu_J)$, for $k \le \lfloor n/2 \rfloor$, and at least one $1 \le j \le J$ such that $\mu_j > k$.

 Let $k\mathbf{e}_j$ be the table with the second row all $0$ except $k$ in the $j$th column. For example, when $n = 10, \mu = (4, 3, 3), \lambda = (8, 2)$, we have the following table
\[
2\mathbf{e}_1 = \begin{pmatrix}
2 & 3 & 3 \\
2 & 0 & 0 
\end{pmatrix}.
\]
Note that the assumption $\mu_j > k$ ensure that this table exists.

Recall Proposition \ref{prop: kernel} which says, evaluated at these states, the kernel polynomials for the multivariate orthogonal polynomials have a simple closed-form expression:
\begin{equation} \label{eqn: kernel}
h_m(k \mathbf{e}_j, k \mathbf{e}_j)  = \binom{k}{m} \frac{(n - 2m + 1)n_{[m - 1]}(n - \mu_j)_{[m]}}{(n - k)_{[m]} (\mu_j)_{[m]}}.
\end{equation}
Recall the notation $a_{[m]} = a(a - 1) \hdots (a - m + 1)$  for the decreasing factorial, with $a_{[0]} := 1$.

\begin{theorem} \label{lem: 2Jupper}
Let $P$ be the transition matrix for the swap Markov chain $\T_{(n - k, k), \mu}$, with $k \le \lfloor n/2 \rfloor$ and $\mu = (\mu_1, \dots, \mu_J)$ with $\mu_j > k$ for at least one index $1 \le j \le J$. For any $c > 0$ and $1 \le j \le J$ such that $\mu_j > k$, 
\begin{enumerate}[(a)]
    \item If 
\[
t = \left( \frac{n}{4} + \frac{k(k-1)}{2(n - 2k)} \right) \cdot \left( \log\left( \frac{k \cdot n \cdot (n - \mu_j)}{(n - 2k)\cdot(\mu_j - k)} \right) + c \right),
\]
then $\chi^2_{k \mathbf{e}_j}(t)  \le e^{-c}$.

\item If 
\[
t = \frac{n}{8}\cdot \left( \log\left( \frac{k \cdot (n-1) \cdot (n - \mu_j)}{(n - k)\cdot \mu_j} \right) - c \right),
\]
then $\chi^2_{k \mathbf{e}_j}(t)  \ge e^c$.
\end{enumerate}
%SOS!!! I don't like this, it doesn't make sense to be e^c for any c....I want 1 - e^(-c) but I stupidly don't know how to get that. And the n/8 instead of n/4 ...Maybe just leave this in thesis to satisfy Persi but cut from paper version because I'm not confident about it...
%\[
%t \ge \frac{n}{4} \left( \log\left( k \cdot \left( \frac{n - \mu_j}{\mu_j} \vee 1 \right) \right) + c \right).
%\]
\end{theorem}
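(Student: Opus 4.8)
The plan is to derive both bounds from the spectral representation of the chi-squared distance. By Theorem~\ref{thm: 2Jev} the chain $P$ on $\T_{(n-k,k),\mu}$ has eigenvalues $\beta_m = 1 - 2m(n+1-m)/n^2$ for $0\le m\le k$ with the multivariate Hahn polynomials as eigenfunctions, so Lemma~\ref{lem: khare} gives
\[
\chi^2_{k\mathbf{e}_j}(t) = \sum_{m=1}^{k} \beta_m^{2t}\, h_m(k\mathbf{e}_j, k\mathbf{e}_j),
\]
with the kernel polynomials given in closed form by \eqref{eqn: kernel}. Both parts then come down to estimating this finite sum at the stated value of $t$.

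Part (b) is the easy direction: I would keep only the $m=1$ term. Each $h_m(k\mathbf{e}_j,k\mathbf{e}_j)$ is a sum of squares, hence nonnegative, so $\chi^2_{k\mathbf{e}_j}(t)\ge \beta_1^{2t}\,h_1(k\mathbf{e}_j,k\mathbf{e}_j)$, and \eqref{eqn: kernel} at $m=1$ evaluates to $h_1(k\mathbf{e}_j,k\mathbf{e}_j)= k(n-1)(n-\mu_j)/((n-k)\mu_j)$, which is exactly the quantity inside the logarithm in (b). Using $1-2/n\ge e^{-2/(n-2)}$ (equivalently $\log(1-2/n)\ge -2/(n-2)$) gives $\beta_1^{2t}=(1-2/n)^{2t}\ge e^{-4t/(n-2)}$; substituting $t=\tfrac{n}{8}(\log h_1-c)$ — the statement being non-trivial precisely when $c<\log h_1$, so that $t>0$ — a short computation shows the resulting lower bound is $\exp\!\big(\tfrac{n-4}{2(n-2)}\log h_1+\tfrac{n}{2(n-2)}c\big)\ge e^{c}$.

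Part (a) is a term-by-term argument. First bound the kernel: replacing each falling factorial in \eqref{eqn: kernel} by its crudest estimate ($n-2m+1\le n$, $n_{[m-1]}\le n^{m-1}$, $(n-\mu_j)_{[m]}\le(n-\mu_j)^{m}$, and, for $m\le k$, $(n-k)_{[m]}\ge(n-2k)^{m}$ and $(\mu_j)_{[m]}\ge(\mu_j-k)^{m}$) together with $\binom{k}{m}\le k^{m}/m!$ yields $h_m(k\mathbf{e}_j,k\mathbf{e}_j)\le R^{m}/m!$ with $R:=kn(n-\mu_j)/((n-2k)(\mu_j-k))$. Next use $1-x\le e^{-x}$ and the identity $m(n+1-m)/n^{2}=m/n-m(m-1)/n^{2}$ to write $\beta_m^{2t}\le \exp(-4tm/n)\exp(4tm(m-1)/n^{2})$. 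Substituting $t=\big(\tfrac{n}{4}+\tfrac{k(k-1)}{2(n-2k)}\big)(\log R+c)$ — chosen so that $4t/n=(1+\tfrac{2k(k-1)}{n(n-2k)})(\log R+c)$ — the $m$-th summand becomes $\tfrac{e^{-c}}{m!}$ times a factor in which the geometric growth $R^{m}$ of the kernel and the surplus $\exp(4tm(m-1)/n^{2})$ coming from the quadratic part of $\beta_m$ are absorbed by the $\tfrac{k(k-1)}{2(n-2k)}$ correction in $t$, uniformly in $1\le m\le k$; summing over $m$ then gives $\chi^2_{k\mathbf{e}_j}(t)\le e^{-c}$.

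The main obstacle is the last step of (a): getting the per-term bound to come out with exactly the coefficient $\tfrac{n}{4}+\tfrac{k(k-1)}{2(n-2k)}$ and exactly the constant $R$. The delicate regime is $k$ close to $\lfloor n/2\rfloor$, where $n-2k$ is small and the correction term dominates $n/4$; there one must check that the quadratic-in-$m$ contribution $4tm(m-1)/n^{2}$ together with the kernel growth is controlled for \emph{every} $m\le k$, not merely for $m=1$. A uniform simplification such as $m(n+1-m)\ge(n+1-k)m$ is too lossy in this range, so the full quadratic dependence of $\beta_m$ on $m$ has to be retained throughout the estimate; essentially all of the work is in this bookkeeping, whereas the set-up and part (b) are immediate.
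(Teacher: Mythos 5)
Your overall setup — spectral representation via Lemma~\ref{lem: khare} plus the closed-form kernel \eqref{eqn: kernel} at $k\mathbf{e}_j$ — is exactly the paper's, and part~(b) is essentially right. You use $\log(1-2/n)\ge -2/(n-2)$ where the paper uses the cruder $\log(1-2/n)\ge -4/n$; the latter plugs in more cleanly ($h_1\,e^{-8t/n}=e^c$ on the nose at the given $t$), while your version requires the side-check $c\le\log h_1$ that you noticed. Either way (b) is fine.

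Part~(a) has a genuine gap, in two places. First, the term-by-term target is too strong: even if you had $S_m\le e^{-c}/m!$ for every $m\ge 1$, summing gives $\sum_{m\ge 1}e^{-c}/m!=(e-1)e^{-c}>e^{-c}$, so the per-term bound by itself cannot close the argument. Second, and more seriously, that per-term bound does not actually hold uniformly. Carrying out your substitution, one gets
\[
S_m \;\le\; \frac{e^{-mc}}{m!}\exp\!\left(\frac{m(\log R + c)}{n}\Bigl[(m-1)-\frac{2k(k-1)(n-m+1)}{n(n-2k)}\Bigr]\right),
\]
and the bracket is negative only when $(m-1)\,n(n-2k)\le 2k(k-1)(n-m+1)$. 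For $m\ge 2$ with $k$ small relative to $n$ this fails badly — e.g.\ $m=2$, $k=2$, $n$ large gives a positive bracket of size $\approx 1$ — and when the bracket is positive the inequality $S_m\le e^{-c}/m!$ fails for all sufficiently small $c>0$ (the leftover $\exp(m\cdot[\text{bracket}]\cdot\log R/n)$ is not absorbed by $e^{-(m-1)c}$ as $c\to 0$). Contrary to your remark, the danger is \emph{not} confined to $k$ near $n/2$: the regime $k\ll n$ already breaks the per-term bound. The paper avoids both issues by bounding the \emph{ratio} of consecutive summands, $S_{m+1}/S_m$. Your falling-factorial estimates applied to that ratio give $S_{m+1}/S_m\le \tfrac{k}{2}\cdot\tfrac{n}{n-2k}\cdot\tfrac{n-\mu_j}{\mu_j-k}\cdot(\beta_{m+1}/\beta_m)^{2t}\le e^{-c}/2\le 1/2$ at the stated $t$, and then $S_1\le e^{-c}$ (checked separately) plus a geometric series closes it — without ever needing a per-term estimate that decays like $1/m!$. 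Replacing your per-term argument by that ratio argument is the fix.
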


\begin{remark}
In \cite{khare} the kernel polynomials for the multivariate hypergeometric distribution are similarly used to analyze three classes of Bernoulli-Laplace type Markov chains, which have multivariate hypergeometric stationary distribution. Theorem \ref{lem: 2Jupper} can be compared to Proposition 4.2.1 \cite{khare}. The random transpositions chain on $\T_{(n - k, k), \mu}$ is essentially a special case of the Bernoulli Laplace level model. 

%Should I say more about Khare's chain?? The first one, the "level" model, is very similar to mine: Pick s balls from left urn and s from right urn to swap. S = 1 special case is like mine, maybe without holding? So I should be able to plug s = 1 into his formulas and get my results ... But I don't want to!!

%14) Shouldn't  lemma 4.1 be Theorem 4.1? Maybe add a sentence comparing it with my (12) above? In particular, when k=o(n) eg k=sqrt n, ALSO, make a comment comparing your chain. and the Khare-Zhou chain (I think that these are different chains, weather or not, you should mention and compare results; at least a bit. 
\end{remark}

%This is very similar to the proof of Proposition 4.7 in Khare
%3/23 checked
\begin{proof}
Using the expression \eqref{eqn: kernel} for the kernel polynomials, the chi-squared distance is
\begin{align*}
\chi^2_{k \mathbf{e}_j}(t) &= \sum_{m = 1}^k \left(1 - \frac{2m(n + 1 - m)}{n^2} \right)^{2t} \cdot \binom{k}{m} \frac{(n - 2m + 1)n_{[m - 1]}(n - \mu_j)_{[m]}}{(n - k)_{[m]} (\mu_j)_{[m]}}.
\end{align*}
To help bound this sum, let $S_m$ be the $m$th term in the summand, i.e.\ 
\[
S_m = \left(1 - \frac{2m(n + 1 - m)}{n^2} \right)^{2t} \cdot \binom{k}{m} \frac{(n - 2m + 1)n_{[m - 1]}(n - \mu_j)_{[m]}}{(n - k)_{[m]} (\mu_j)_{[m]}}.
\]
Then the ratio of consecutive terms can be bounded:
\begin{align*}
\frac{S_{m+1}}{S_m} \le &\left( 1 - \frac{2(n - 2m)}{n^2 - 2m(n  + 1 - m)} \right)^{2t} \cdot \frac{k-m}{m + 1} \cdot \frac{n  - 2m - 1}{n - 2m + 1} \cdot \frac{(n - m + 1)(n - \mu_j -m)}{(n - k -m)(\mu_j - m)} \\
&\,\,\,\,\,\,\,\,\,\, \le \left(1 - \frac{2(n - 2k)}{n^2 - 2k(n + 1 - k)}  \right)^{2t} \cdot \frac{k}{2} \cdot \frac{n}{n -2k} \cdot \frac{n - \mu_j}{\mu_j - k} \\
&\,\,\,\,\,\,\,\,\,\, \le \frac{k}{2} \cdot \frac{n}{n -2k} \cdot \frac{n - \mu_j}{\mu_j - k} \exp \left( - 2t \left( \frac{2(n - 2k)}{n^2 - 2k(n + 1 - k)} \right) \right).
\end{align*}
If $c > 0$ and 
\[
t = \left( \frac{n^2 - 2k(n + 1 - k)}{4(n - 2k)} \right)  \cdot \left( \log\left( k \cdot \frac{n}{n - 2k} \cdot \frac{n - \mu_j}{\mu_j - k} \right) + c \right),
\]
then the ratio $S_{m+1}/S_m$ is less than $e^{-c}/2 \le 1/2$. Also the first term $S_1$ is the largest term, and
\[
S_1 = \left(1 - \frac{2}{n} \right)^{2t} \cdot \frac{k(n - 1)(n - \mu_j)}{(n - k) (\mu_j)} \le e^{-c}.
\]
Thus, 
\begin{align*}
\sum_{m = 1}^k S_m  \le S_1 \sum_{m = 0}^\infty \frac{1}{2^m} \le e^{-c}.
\end{align*}

The bound for part (b) comes from considering the contribution to the sum from the largest eigenvalues, which is for $m = 1$. This gives
\begin{align*}
    \chi^2_{k \mathbf{e}_j}(t) &\ge  \left(1 - \frac{2}{n} \right)^{2t} \cdot k \cdot \frac{(n - 1)n_{[0]}(n - \mu_j)_{[1]}}{(n - k)_{[1]} (\mu_j)_{[1]}} = \left(1 - \frac{2}{n} \right)^{2t} \cdot k\cdot \frac{(n - 1)\cdot (n - \mu_j)}{(n - k) \cdot \mu_j} \\
    &= \exp \left( 2t \log \left( 1 - \frac{2}{n} \right) + \log \left( k\cdot \frac{(n - 1)\cdot (n - \mu_j)}{(n - k) \cdot \mu_j} \right)\right) \\
    &\ge \exp \left( 2t  \left( - \frac{4}{n}  \right) + \log \left( k\cdot \frac{(n - 1)\cdot (n - \mu_j)}{(n - k) \cdot \mu_j} \right)\right) ,
\end{align*}
using that $\log(1 - x) \ge - x - x^2 > - 2x$ for $0 \le x \le 1/2$. The sum is then $\ge e^{c}$ for $t = (n/8)\left( \log \left( k(n-1)(n- \mu_j)/((n-k) \mu_j) \right) - c\right)$.
\end{proof}

\begin{example} \label{example: upper22}
Suppose $\lambda = (n - k, k), \mu = (n - \ell, \ell)$ with $\ell > k$. Then Lemma \ref{lem: 2Jupper} gives $\chi^2_{k \mathbf{e}_2}(t) \le e^{-c}$ for
\[
t = \left( \frac{n}{4} + \frac{k(k-1)}{2(n - 2k)} \right) \cdot \left( \log\left( \frac{k \cdot n \cdot (n - \ell)}{(n - 2k)\cdot(\ell - k)} \right) + c \right).
\]
In particular if $n$ is even, $k = n/2 - 1$, $\ell = n/2$, then
\[
t = \left( \frac{n}{4} + \frac{(n/2-1)(n/2-2)}{2} \right) \cdot \left( \log\left( (n/2 - 1) \cdot n \cdot (n/2)\right) + c \right) \sim \left( \frac{n}{4} + \frac{n^2}{8} \right) \left(\log(n^3) + c \right).
\]
%\textcolor{red}{Okay, this is weird ... and problem is we get $n^2$ in the second term -- is the $n - 2k$ part correct? Yes, I think so ...}
Note that this is a factor of $n$ larger than the bound in Lemma \ref{lem: avg} for the distance averaged over all starting states. This could be due to slower mixing from the extreme starting state:
\[
k \mathbf{e}_2 = \begin{pmatrix}
n - \ell + k & \ell - k \\
0 & k
\end{pmatrix},
\]
which has small probability under $\pi_{(n -k , k), (n - \ell, \ell)}$.

%Can I get upper bound by different technique in this case?
\end{example}

\subsection{Lower Bound}

Because the linear eigenfunctions are known for any size contingency table, Wilson's method works to give a general lower bound. 

\begin{theorem}[Wilson's Method, Theorem 13.5 in \cite{levinPeres}]
Let $(X_t)_{t \ge 0}$ be an irreducible aperiodic Markov chain with state space $\Omega$ and transition matrix $P$. Let $\phi$ be an eigenfunction of $P$ with eigenvalue $\lambda$ with $1/2 < \lambda < 1$. Fix $0 < \epsilon < 1$ and let $R > 0$ satisfy
\[
\E \left[ |\phi(X_1) - \phi(x)|^2 \mid X_0 = x \right] \le R, \,\,\,\,\,\,\,\, \forall x \in \Omega.
\]
Then for any $x \in \Omega$,
\[
t_{mix}(\epsilon) \ge \frac{1}{2 \log(1/\lambda)} \left[ \log \left( \frac{(1 - \lambda) \phi(x)^2}{2 R} \right) + \log \left( \frac{1 - \epsilon}{\epsilon} \right) \right].
\]
\end{theorem}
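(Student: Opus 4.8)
This is the classical second-moment (``distinguishing statistic'') argument applied with $\phi$ itself as the test function, and I would reconstruct it as follows. Assume without loss of generality $\phi(x) > 0$ (replace $\phi$ by $-\phi$ if needed; the asserted bound is vacuous when $\phi(x) = 0$), write $\mu_t := P^t(x, \cdot)$, and set $m_t := \lambda^t \phi(x)$. Iterating the eigenrelation $\E[\phi(X_{s+1}) \mid X_s = y] = \lambda \phi(y)$ gives $\E_{\mu_t}[\phi] = m_t$, while stationarity of $\pi$ gives $\E_\pi[\phi] = \E_\pi\big[\E[\phi(X_1) \mid X_0]\big] = \lambda \E_\pi[\phi]$, hence $\E_\pi[\phi] = 0$ since $\lambda \neq 1$. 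Thus the mean of $\phi$ is $m_t > 0$ under $\mu_t$ and $0$ under $\pi$; these are far apart while $t$ is not too large, and the whole point is to control the variances well enough that a threshold event separates the two distributions.

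Second, I would prove a variance bound that is \emph{uniform in $t$}. Conditioning on $X_s$ and expanding $\phi(X_{s+1})^2 = (\phi(X_{s+1}) - \phi(X_s))^2 + 2\phi(X_s)\phi(X_{s+1}) - \phi(X_s)^2$, the eigenrelation together with the hypothesis on $R$ yields the one-step inequality $\E[\phi(X_{s+1})^2 \mid X_s = y] \le R + (2\lambda - 1)\phi(y)^2$ for every $y$. Averaging over $X_s$ started from $x$ gives $a_{s+1} \le R + (2\lambda - 1)a_s$ for $a_s := \E_x[\phi(X_s)^2]$; since $1/2 < \lambda < 1$ we have $0 < 2\lambda - 1 < 1$, so iterating gives $a_t \le (2\lambda - 1)^t \phi(x)^2 + \tfrac{R}{2(1-\lambda)}$. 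The elementary inequality $2\lambda - 1 \le \lambda^2$ (that is, $(1-\lambda)^2 \ge 0$) gives $(2\lambda - 1)^t \le \lambda^{2t}$, whence
\[
\operatorname{Var}_{\mu_t}(\phi) = a_t - m_t^2 \le \frac{R}{2(1-\lambda)} =: \sigma^2 .
\]
Averaging the same one-step inequality under $\pi$ and using stationarity gives $\E_\pi[\phi^2] \le R + (2\lambda - 1)\E_\pi[\phi^2]$, hence $\operatorname{Var}_\pi(\phi) = \E_\pi[\phi^2] \le \sigma^2$ as well.

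Third is the distinguishing step. I would pick a threshold $s = \theta m_t$ with $\theta \in (0,1)$ and use the event $A = \{y : \phi(y) \ge s\}$. The one-sided Chebyshev (Cantelli) inequality under $\mu_t$ (mean $m_t$, variance $\le \sigma^2$) bounds $\mu_t(A^c) = \mu_t(\phi < s)$ by $\sigma^2/(\sigma^2 + (1-\theta)^2 m_t^2)$, and under $\pi$ (mean $0$, variance $\le \sigma^2$) it bounds $\pi(A)$ by $\sigma^2/(\sigma^2 + \theta^2 m_t^2)$. Hence $\|\mu_t - \pi\|_{TV} \ge \mu_t(A) - \pi(A)$ is bounded below by an explicit decreasing function of $\eta_t := \tfrac{4\sigma^2}{m_t^2} = \tfrac{2R}{(1-\lambda)\lambda^{2t}\phi(x)^2}$. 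A suitable choice of $\theta$ and concentration estimate turns the requirement ``this lower bound exceeds $\epsilon$'' into the condition $\eta_t < \tfrac{\epsilon}{1-\epsilon}$; solving that for $t$ — isolating $\lambda^{2t}$, taking logarithms, and using $\lambda^{2t} = e^{-2t\log(1/\lambda)}$ — produces exactly the stated bound, with the term $\log\!\big(\tfrac{(1-\lambda)\phi(x)^2}{2R}\big)$ coming from $\eta_t$ and $\log\!\big(\tfrac{1-\epsilon}{\epsilon}\big)$ from the separation threshold.

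The substantive points are (i) making the variance bound uniform in $t$, which rests squarely on $2\lambda - 1 \le \lambda^2$ and on $\lambda > 1/2$ so that the geometric series has a positive ratio strictly below $1$; and (ii) the bookkeeping in the distinguishing step, namely choosing the threshold and the concentration inequality so that the constant comes out to precisely $\tfrac{1-\epsilon}{\epsilon}$ rather than a merely comparable function of $\epsilon$ — this is routine but is where the exact form of the statement is earned. Irreducibility and aperiodicity enter only to make $\pi$ the unique stationary distribution and $t_{mix}(\epsilon)$ well-defined; the estimates above use nothing beyond $P\phi = \lambda\phi$, $\lambda \neq 1$, the bound $R$, and stationarity of $\pi$.
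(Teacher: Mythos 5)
The paper does not prove this result; it is cited verbatim from Levin and Peres, so there is no ``paper's own proof'' to compare against. What follows is an assessment of your reconstruction on its own terms.

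Your Steps 1 and 2 are correct and essentially standard: the one-step identity $\E[\phi(X_{s+1})^2\mid X_s=y]\le R+(2\lambda-1)\phi(y)^2$, the iteration giving $\operatorname{Var}_{\mu_t}(\phi)\le\sigma^2:=\tfrac{R}{2(1-\lambda)}$ uniformly in $t$ (using $2\lambda-1\le\lambda^2$), and the same bound for $\operatorname{Var}_\pi(\phi)$ via stationarity, are all right. But Step 3 has a genuine gap: the one-sided Chebyshev (Cantelli) argument at a threshold $s=\theta m_t$, optimized at $\theta=1/2$, gives
\[
\|\mu_t-\pi\|_{TV}\;\ge\;1-\frac{\sigma^2}{\sigma^2+(m_t/2)^2}-\frac{\sigma^2}{\sigma^2+(m_t/2)^2}\;=\;\frac{m_t^2-4\sigma^2}{m_t^2+4\sigma^2},
\]
whereas the stated theorem requires the strictly stronger
\[
\|\mu_t-\pi\|_{TV}\;\ge\;\frac{m_t^2}{m_t^2+4\sigma^2}.
\]
These differ by the $-4\sigma^2$ in the numerator, which is not a cosmetic matter: with Cantelli the condition ``lower bound $\ge\epsilon$'' becomes $m_t^2\ge\tfrac{4\sigma^2(1+\epsilon)}{1-\epsilon}$ rather than $m_t^2\ge\tfrac{4\sigma^2\epsilon}{1-\epsilon}$, and you end up with $\log\!\big(\tfrac{1-\epsilon}{1+\epsilon}\big)$ in place of $\log\!\big(\tfrac{1-\epsilon}{\epsilon}\big)$. (Also, $\eta_t<\tfrac{\epsilon}{1-\epsilon}$ should read $\eta_t<\tfrac{1-\epsilon}{\epsilon}$, but that is a slip; the substantive problem is the concentration inequality.) The correct distinguishing step does not use a threshold event at all. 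It uses the optimal total-variation coupling $(X,Y)$ with $X\sim\pi$, $Y\sim\mu_t$, $\P(X\ne Y)=\|\mu_t-\pi\|_{TV}$, centers $\phi$ at the midpoint $m_t/2$, and applies Cauchy--Schwarz:
\[
m_t=\E\big[(\phi(Y)-m_t/2)-(\phi(X)-m_t/2)\big]=\E\big[\bigl((\phi(Y)-m_t/2)-(\phi(X)-m_t/2)\bigr)\mathbf{1}\{X\ne Y\}\big]
\]
\[
\le\Big(2\operatorname{Var}_{\mu_t}\phi+2\operatorname{Var}_\pi\phi+m_t^2\Big)^{1/2}\,\P(X\ne Y)^{1/2}\le\big(4\sigma^2+m_t^2\big)^{1/2}\,\|\mu_t-\pi\|_{TV}^{1/2},
\]
which yields $\|\mu_t-\pi\|_{TV}\ge\tfrac{m_t^2}{m_t^2+4\sigma^2}$ and hence exactly $\log\!\big(\tfrac{1-\epsilon}{\epsilon}\big)$. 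This coupling inequality is the ``distinguishing statistics'' proposition in Levin--Peres (the form $\|\mu-\nu\|_{TV}\ge r^2/(r^2+4)$ with $r\sigma$ the gap in means); Cantelli cannot be upgraded to it by tuning the threshold, because the $(r^2-4)/(r^2+4)$ bound is what Cantelli actually delivers and it is tight for that method. So to fix your proof, replace the threshold-plus-Cantelli step with this coupling argument; the rest stands.
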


For the contingency table Markov chain on $\T_{\lambda, \mu}$ the linear functions
\[
f_{ij}(\mx) = x_{ij} - \frac{\lambda_i \mu_j}{n}, \quad \mx \in \T_{\lambda, \mu}
\]
for any $i, j$, are eigenfunctions with eigenvalue $1 - 2/n$. These will be used in Wilson's method to get the following result. 

\begin{lemma} \label{lem: CTlowerBound}
Let $\lambda = (\lambda_1, \dots, \lambda_I), \mu = (\mu_1, \dots, \mu_J)$ be any partitions of $n$. For any $i, j$ and $c > 0$,
\[
t_{mix} \ge \begin{cases} \left( \frac{n}{4} - \frac{1}{2} \right) \left( \log \left( m_{ij} - \frac{\lambda_i \mu_j}{n} \right) - c \right) & \text{if} \,\,\, n \ge 2(\lambda_i + \mu_j) \\
\left( \frac{n}{4} - \frac{1}{2} \right) \left( \log \left( \frac{1}{2} \frac{(n m_{ij} - \lambda_i \mu_j)^2}{n(n+2) \lambda_i \mu_j}  \right) - c \right) & \text{if} \,\,\, n < 2(\lambda_i + \mu_j)
\end{cases},
\]
where $m_{ij} = \min(\lambda_i, \mu_j)$.
%Checked ... the math is correct -- though possibly there are better bounds somehow
\end{lemma}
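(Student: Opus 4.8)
The plan is to apply Wilson's method directly with the linear eigenfunction $\phi = f_{ij}$, which has eigenvalue $\lambda = 1 - 2/n$ (valid for $n > 4$, so that $1/2 < \lambda < 1$). The two ingredients needed are: (i) a bound $R$ on the one-step quadratic variation $\E[|f_{ij}(T_1) - f_{ij}(x)|^2 \mid T_0 = x]$, uniform in $x \in \T_{\lambda, \mu}$; and (ii) a choice of starting state $x$ that makes $\phi(x)^2$ as large as possible. For (ii), note $f_{ij}(x) = x_{ij} - \lambda_i\mu_j/n$ is maximized in absolute value by taking $x_{ij}$ as large as possible, namely $x_{ij} = m_{ij} = \min(\lambda_i, \mu_j)$, which is achievable by some table in $\T_{\lambda, \mu}$ (put as much mass as possible in cell $(i,j)$ and fill in the rest greedily). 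So $\phi(x)^2 = (m_{ij} - \lambda_i\mu_j/n)^2$.

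For (i), since $f_{ij}(T_1) - f_{ij}(x) = T_1(i,j) - x_{ij}$ and the $(i,j)$ entry performs a $\pm 1$ birth-death step, the increment is $0$, $+1$, or $-1$, so $|f_{ij}(T_1) - f_{ij}(x)|^2$ is the indicator that the $(i,j)$ entry changes. Hence $R$ can be taken to be the probability that entry $(i,j)$ changes, which from \eqref{eqn: t1} and \eqref{eqn: t2} equals
\[
\frac{2(\lambda_i - x_{ij})(\mu_j - x_{ij}) + 2 x_{ij}(n - \lambda_i - \mu_j + x_{ij})}{n^2}.
\]
Expanding, this is a quadratic in $x_{ij}$ of the form $\frac{2}{n^2}\big(2 x_{ij}^2 - 2(\lambda_i + \mu_j) x_{ij} + \lambda_i\mu_j + x_{ij}(n - \lambda_i - \mu_j)\big)$; I would maximize it over the interval $x_{ij} \in [0, m_{ij}]$. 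The coefficient of $x_{ij}^2$ is positive, so the maximum is at an endpoint. Evaluating at $x_{ij} = 0$ gives $2\lambda_i\mu_j/n^2$; evaluating at $x_{ij} = m_{ij}$ gives a term proportional to $(n - \lambda_i - \mu_j)$-ish expressions. This dichotomy is precisely what produces the two cases in the statement: when $n \ge 2(\lambda_i + \mu_j)$ the relevant bound is the "large $n$" regime where $R$ is controlled by a clean expression, and when $n < 2(\lambda_i + \mu_j)$ one uses $R \le 2\lambda_i\mu_j/n^2 \cdot (\text{something})$ — more precisely $R \le \frac{2(n+2)\lambda_i\mu_j}{n^2}$ or similar — giving the second form after plugging into Wilson.

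The final step is bookkeeping: substitute $\lambda = 1 - 2/n$, so $1 - \lambda = 2/n$ and $\log(1/\lambda) = -\log(1 - 2/n) \le \frac{2}{n} + \frac{2}{n^2} = \frac{2}{n}\cdot\frac{n+1}{n} \le \frac{2(n+1)}{n^2}$, hence $\frac{1}{2\log(1/\lambda)} \ge \frac{n^2}{4(n+1)} \ge \frac{n}{4} - \frac{1}{2}$ (checking the last inequality, $\frac{n^2}{4(n+1)} = \frac{n}{4} - \frac{n}{4(n+1)} \ge \frac{n}{4} - \frac{1}{2}$ holds once $n \ge 1$). Then Wilson's bound reads $t_{mix}(\epsilon) \ge (\frac{n}{4} - \frac{1}{2})\big[\log\frac{(1-\lambda)\phi(x)^2}{2R} + \log\frac{1-\epsilon}{\epsilon}\big]$; absorbing $\log\frac{1-\epsilon}{\epsilon}$ and any constant slack into the free parameter $c$ and simplifying $\frac{(1-\lambda)\phi(x)^2}{2R} = \frac{(2/n)(m_{ij} - \lambda_i\mu_j/n)^2}{2R}$ in each of the two cases yields the two displayed bounds. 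The only mildly delicate point — and the main obstacle — is getting the case split on $R$ to land exactly on the threshold $n = 2(\lambda_i + \mu_j)$ with the stated clean expressions; this is a routine but careful optimization of the quadratic $R(x_{ij})$ over $[0, m_{ij}]$, tracking which endpoint dominates and bounding the resulting quantity from above by the expression that appears in the denominator inside the logarithm. Everything else is Wilson's method applied verbatim with the already-established linear eigenfunction from Lemma \ref{lem: linearEF}.
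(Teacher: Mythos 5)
Your approach matches the paper's proof: Wilson's method with $\phi = f_{ij}$, eigenvalue $1 - 2/n$, starting state maximizing $|\phi|$ at $x_{ij} = m_{ij}$, bounding the one-step quadratic variation $R(x_{ij})$ as a quadratic in $x_{ij}$ and splitting cases on the sign of its linear coefficient. The observation that the increment is $\pm 1$ so the quadratic variation is just the probability the entry changes is a clean rephrasing of what the paper computes directly via Lemma~\ref{lem: secondMoments}, and your case split lands on the right threshold.

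However, the intermediate inequality you use for the prefactor is backwards. You claim $-\log(1-2/n) \le \tfrac{2}{n} + \tfrac{2}{n^2}$, but the Taylor series $-\log(1-u) = u + u^2/2 + u^3/3 + \cdots$ with $u = 2/n$ gives $-\log(1-2/n) = \tfrac{2}{n} + \tfrac{2}{n^2} + \tfrac{8}{3n^3} + \cdots \ge \tfrac{2}{n} + \tfrac{2}{n^2}$, so your chain would produce an \emph{upper} bound on $\tfrac{1}{2\log(1/\lambda)}$, which is the wrong direction for a lower bound on $t_{\mathrm{mix}}$. The fix is elementary: the paper uses $\tfrac{1}{-\log(1-\gamma)} \ge \tfrac{1}{\gamma} - 1$ (equivalently $-\log(1-\gamma) \le \tfrac{\gamma}{1-\gamma}$, i.e.\ $\log u \le u - 1$ with $u = 1/(1-\gamma)$), which with $\gamma = 2/n$ gives $\tfrac{1}{2\log(1/\lambda)} \ge \tfrac{n-2}{4} = \tfrac{n}{4} - \tfrac{1}{2}$ directly. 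With that substitution your argument is a correct rendering of the paper's.
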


\begin{proof}
From Lemmas \ref{lem: linearEF} and \ref{lem: secondMoments}, 
\begin{align*}
&\E[T_{t+1}(i, j) \mid T_t = \mx] = x_{ij} \left( 1 - \frac{2}{n} \right) + \frac{2 \lambda_i \mu_j}{n^2} \\
&\E[T_{t+1}(i, j)^2 \mid T_t = \mx] = x_{ij}^2 \left( 1 - \frac{4}{n} + \frac{4}{n^2} \right) + \frac{2}{n^2} \left( x_{ij}(2 \lambda_i \mu_j - 2 \lambda_i - 2 \mu_j + n) + \lambda_i \mu_j) \right).
\end{align*}
This allows the calculation of $\E[|\phi(T_1) - \phi(\mx)|^2 \mid T_0 = \mx]$ for $\phi = f_{ij}$:
\begin{align*}
\E[|f_{ij}(T_1) - f_{ij}(\mx)|^2 \mid T_0 = \mx] &= \E[|T_1(i, j) - x_{ij}|^2 \mid T_0 = \mx] \\
&= x_{ij}^2 \left( 1 - \frac{4}{n} + \frac{4}{n^2} \right) + \frac{2}{n^2} \left( x_{ij}(2 \lambda_i \mu_j - 2 \lambda_i - 2 \mu_j + n)  + \lambda_i \mu_j) \right) \\
&\,\,\,\,\,\,\,\,\,\, - 2 x_{ij} \left(x_{ij} \left( 1 - \frac{2}{n} \right) + \frac{2 \lambda_i \mu_j}{n^2} \right) + x_{ij}^2 \\ 
&= x_{ij}^2 \cdot \frac{4}{n^2} + x_{ij} \cdot \frac{2}{n^2} \left(n - 2 \lambda_i - 2 \mu_j \right) + \frac{2 \lambda_i \mu_j}{n^2} =: A
\end{align*}
First suppose that $i, j$ are such that $n \ge  2(\lambda_i + \mu_j)$, and note that $x_{ij} \le \min(\lambda_i, \mu_j) =: m_{ij}$ for $\mx \in \T_{\lambda, \mu}$. A bound then is
\begin{align*}
A &\le \frac{4 m_{ij}^2}{n^2} + m_{ij} \frac{2}{n^2} \left( n - 2 \lambda_i - 2 \mu_j \right) + \frac{2 \lambda_i \mu_j}{n^2} \\
&= \frac{2 m_{ij}}{n} - \frac{2 \lambda_i \mu_j}{n^2}.
\end{align*}
This is the constant $R$ that can be applied to Wilson's method.
Using $\mx \in \T_{\lambda, \mu}$ such that $x_{ij} = m_{ij} = \min(\lambda_i, \mu_j)$:
\begin{align*}
\frac{(1 - \lambda) f_{ij}(\mx)^2}{2R} &= \left. \frac{2}{n} \left( x_{ij} - \frac{\lambda_i \mu_j}{n} \right)^2 \middle/  2 \left( \frac{2 m_{ij}}{n} - \frac{2 \lambda_i \mu_j}{n^2} \right) \right. \\
&= \left. \frac{2}{n} \left( m_{ij} - \frac{\lambda_i \mu_j}{n} \right)^2 \middle/  2 \left( \frac{2 m_{ij}}{n} - \frac{2 \lambda_i \mu_j}{n^2} \right) \right. = \frac{1}{2} \left( m_{ij} - \frac{\lambda_i \mu_j}{n} \right)
\end{align*}
If $n <  2(\lambda_i + \mu_j)$, then
\[
A \le \frac{4m_{ij}^2}{n^2} + \frac{2 \lambda_i \mu_j}{n} \le \frac{2 \lambda_i \mu_j}{n}\left(1 + \frac{2}{n} \right),
\]
and in this case
\begin{align*}
\frac{(1 - \lambda) f_{ij}(\mx)^2}{2R} = \left. \frac{2}{n} \left( m_{ij} - \frac{\lambda_i \mu_j}{n} \right)^2 \middle/  \frac{4 \lambda_i \mu_j}{n} \left( 1 + \frac{2}{n}  \right) \right. = \frac{1}{2} \frac{(n m_{ij} - \lambda_i \mu_j)^2}{n(n+2) \lambda_i \mu_j} 
\end{align*}
%CAN I MAKE THIS ANY MORE CLEAR???
 Finally, note that, with $\lambda = 1 -2/n$ and using the bound $1/-\log(1 - \gamma) \ge 1/\gamma - 1$,
\[
\frac{1}{2\log(1/\lambda)} = \frac{1}{-2 \log(1 - 2/n)} \ge \frac{n}{4} - \frac{1}{2}.
\]

\end{proof}

\begin{example}
Suppose $\lambda = (n - k, k), \mu = (n - \ell, \ell)$ with $k < \ell \le n/2$ (as in Example \ref{example: upper22}). Then $2n - \ell - k > n$, so the second case in Lemma \ref{lem: CTlowerBound} always applies to the $(1, 1)$ entry of the table. Since $m_{11} = \min(n - \ell, n - k) = n - \ell$, a lower bound is
\begin{align*}
    \tm &\ge \left( \frac{n}{4} - \frac{1}{2} \right) \left( \log \left( \frac{1}{2} \frac{(n(n - \ell) - (n - \ell)(n - k) )^2}{n(n+2) (n - \ell)(n - k) }  \right) - c \right) \\
    &= \left( \frac{n}{4} - \frac{1}{2} \right) \left( \log \left( \frac{1}{2} \frac{(n - \ell)k^2}{n(n+2)(n - k)} \right) - c \right).
\end{align*}
For example, if $k = \ell = n/2$, then the expression inside the $\log$ is equal to $(n/2)^2/(2n(n+2)) \sim 1/8$.
%Need to check

If $\ell, k$ are small enough so that $k + \ell \le n/2$, then the first case of Lemma \ref{lem: CTlowerBound} applies to the $(2, 2)$ entry, with $m_{22} = \min(k, \ell) = k$. The lower bound is then 
\begin{align*}
    \tm = \left( \frac{n}{4} - \frac{1}{2} \right) \left( \log \left( k - \frac{k \ell }{n} \right) - c \right).
\end{align*}
For example, $\ell = k = n/4$, then the expression inside the $\log$ is equal to $(n^2/4 - n^2/16)/n \sim (3/16) n$. 
\end{example}

\begin{comment}
\begin{remark}

Another remark: $\lambda_i \mu_j/n$ is the expected value of the $i, j$ entry ... Is this always like $(n/4) \log(c)$??
\end{remark}
\end{comment}

\section{Further Directions} \label{sec: future}

This section discussions some future directions and applications of the random transpositions chain on contingency tables. Section \ref{sec: dataAnalysis} explores how the linear and quadratic eigenfunctions of the Fisher-Yates distribution could be used in statistical applications, to decompose the chi-squared statistic. Section \ref{sec: multiway} notes how the random transpositions chain can be extended to multi-way tables (coming from data with more than $2$ categorical features). Section \ref{sec: metropolisCT} considers the random transpositions chain on $\T_{\lambda, \mu}$ transformed via the Metropolis-Hastings algorithm to a new Markov chain which has uniform stationary distribution (and vice-versa, the symmetric swap chain can be transformed to have Fisher-Yates stationary distribution). The relaxation times of the chains can be compared. The constants in the comparison depend on $\lambda, \mu$ and the size of the table, but the conclusion is that for sparse tables the metropolized version of random transpositions has a significantly smaller relaxation time than the symmetric swap chain. Section \ref{sec: GLnparabolic} notes the $q$-analog of the Fisher-Yates distribution arises via double cosets. 

\subsection{Data Analysis} \label{sec: dataAnalysis}

This section discusses potential statistical applications of the eigenfunctions for the random transpositions chain on contingency tables. First, some classic datasets are described.

\paragraph{Datasets.}
Tables \ref{tab: midtown}, \ref{tab: victoria}, \ref{tab: hair} are classical real data tables with large $\chi^2$ statistic. Figure \ref{fig: stats} shows a histogram of the the quadratic eigenfunctions evaluated on each of these tables, as well as a plot of the Pearson residuals. We have not succeded in finding any extra structure from these displays but believe that they may sometimes be informative.

%How to present data? 
%1) Add the tables, cite source, give chi^2 value 
%2) plot or table of linear residuals, maybe comparing the normalizations
%3) histograms of quadratic eigenfunctions, comment on outliers

\begin{table}
\begin{center}
\begin{tabular}{c | c c c c | c}
& Well & Mild & Moderate & Impaired & \textbf{Total} \\ \hline
A & 64 & 94 & 58 & 46 & 262 \\  
B & 57 & 94 & 54 & 40 & 245 \\ 
C & 57 & 105 & 65 & 60 & 287 \\ 
D &72 & 141 & 77 & 94 & 384 \\ 
E & 36 & 97 & 54 & 78 & 265 \\ 
F & 21 & 71 & 54 & 71 & 217 \\ \hline
\textbf{Total} & 307 & 602 & 362 & 389 & 1660
\end{tabular} 
\caption{Midtown Manhattan Mental Health Study data} \label{tab: midtown}
\end{center}
\end{table}

Table \ref{tab: midtown} shows data from an epidemiological survey known as the Midtown Manhattan Mental Health Study  \cite{midtown}. Rows record parent's socioeconomic status (ranging from A = high, to F = low) and columns severity of mental illness. The $\chi^2$ statistic is $45.98$ on $15$ degrees of freedom.

%The study was conducted as a survey of midtown Manhattan residents in the 1950s and was the first large scale systematic epidemiologic study of psychiatric disorders. The data described below is one recorded cross classification of the study: severity of mental illness by parent’s socioeconomic status.
%People surveyed were cross-classified by their psychiatric condition characterized as Well, Mild Symptoms, Moderate Symptoms and Impaired, and by parental income across six socioeconomic categories ranging from high to low indexed by A, B, C, D, E, and F:

\begin{table}
\begin{center}
\begin{tabular}{c | c c c c c c c c c c c c | c}
 &\textbf{Jan} & \textbf{Feb} & \textbf{March} & \textbf{April}  & \textbf{May} & \textbf{June} & \textbf{July} & \textbf{Aug} & \textbf{Sep} & \textbf{Oct} & \textbf{Nov} & \textbf{Dec}  & \textbf{Total}\\ \hline
\textbf{Jan} & 1 & 0 & 0& 0& 1& 2& 0 & 0 & 1 & 0& 1& 0 & 6\\
   \textbf{Feb} &              1& 0& 0& 1& 0& 0& 0& 0& 0& 1& 0& 2 & 5 \\
 \textbf{March} &                1& 0& 0& 0& 2& 1& 0& 0& 0& 0& 0& 1 & 5 \\
\textbf{April}  &                 3& 0& 2& 0& 0& 0& 1& 0& 1& 3& 1& 1 & 12 \\
 \textbf{May} &                 2& 1& 1& 1& 1& 1& 1& 1& 1& 1& 1& 0 & 12 \\
 \textbf{June} &                2& 0& 0& 0& 1& 0& 0& 0& 0& 0& 0& 0 & 3 \\
 \textbf{July} &                2& 0& 2& 1& 0& 0& 0& 0& 1& 1& 1& 2 & 10\\
 \textbf{Aug} &                0& 0& 0& 3& 0& 0& 1& 0& 0& 1& 0& 2 & 7\\
 \textbf{Sep} &                0& 0& 0& 1& 1& 0& 0& 0& 0& 0& 1& 0 & 3 \\
 \textbf{Oct} &                1& 1& 0& 2& 0& 0& 1& 0& 0& 1& 1& 0 & 7\\
   \textbf{Nov} &              0& 1& 1& 1& 2& 0& 0& 2& 0& 1& 1& 0 & 9\\
 \textbf{Dec} &                0& 1& 1& 0& 0& 0& 1& 0& 0& 0& 0& 0 & 3\\ \hline
        \textbf{Total} & 13 & 4 & 7 & 10 & 8 & 4 & 5 & 3 & 4 & 9 & 7 & 8 & \textbf{82}
\end{tabular} 
\caption{Birth and deathday for Queen Victoria's descendants.} \label{tab: victoria}
\end{center}
\end{table}

Table \ref{tab: victoria} records the month of birth and death for $82$ descendants of Queen Victoria, occurs as an example in \cite{diaconisSturmfels}. The $\chi^2$ statistic is $115.6$ with $121$ degrees of freedom, which gives $p$-value $0.621$, suggesting we do not reject the null hypothesis for independence. The classical rules of thumb for validity of the chi-square approximation is that there is a minimum of $5$ entries per cell; this assumption is are badly violated in Table \ref{tab: victoria}, and there are too many tables with these margins for exact enumeration.

\begin{table}[] 
\begin{center}

\begin{tabular}{l|llll|l}
               & Black & Brown & Red & Blond & \textbf{Total} \\ \hline
Brown          & 68    & 119   & 26  & 7     & 220            \\
Blue           & 20    & 84    & 17  & 94    & 215            \\
Hazel          & 15    & 54    & 14  & 10    & 93             \\
Green          & 5     & 29    & 14  & 16    & 64             \\ \hline
\textbf{Total} & 108   & 286   & 71  & 127   & \textbf{592}  
\end{tabular}
\caption{Eye color vs.\ hair color for $n = 592$ individuals.} \label{tab: hair}
\end{center}
\end{table}

Table \ref{tab: hair} was analyzed in \cite{DEfron}; the $\chi^2$ statistic is $138.29$ with $9$ degrees of freedom.

\begin{figure}[ht]
\begin{center}
\includegraphics[scale=0.7]{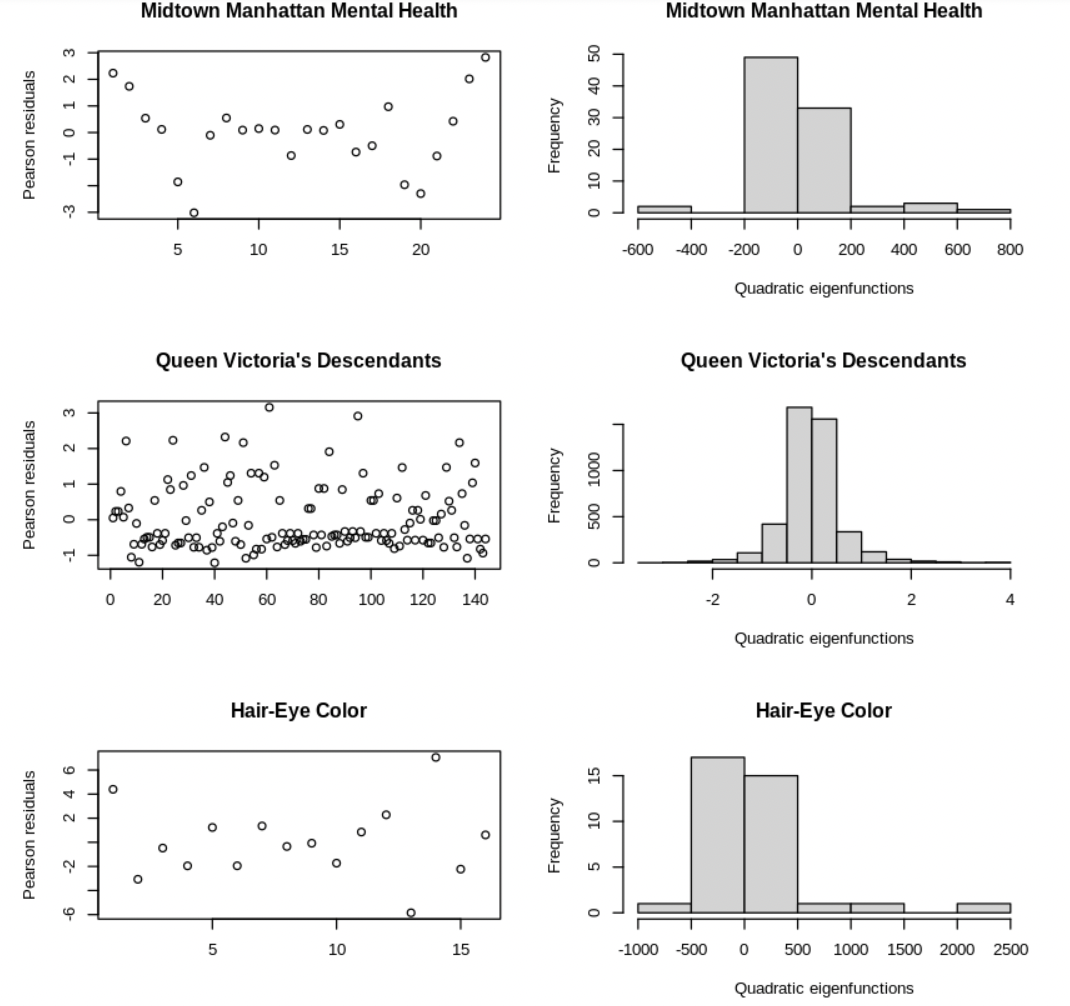}
\caption{The figures in the left column are the Pearson residuals, which are the normalized linear eigenfunctions; the $x$-axis indicates an arbitrary ordering of the residuals and the $y$-axis are the values of the residuals. The right column shows histograms for the normalized quadratic eigenfunctions.} \label{fig: stats}

\end{center}
\end{figure}

%SIGH, about my daydream (as it now seems) to use the quadratic eigenvectors for statistics, I think you should add a tiny (1 page or so) section saying 
\paragraph{Residuals.} The linear eigenfunctions derived in Lemma \ref{lem: linearEF} are well known as `Pearson residuals' in the classical analysis of contingency tables \cite{agresti92}. The naturally scaled eigenfunctions
\[
\widehat{f}_{ij}(T) = \frac{T_{ij} - \lambda_i \mu_j/n}{\sqrt{\lambda_i \mu_j/n}}
\]
measure the departure of the table from the null model. Standard practice displays all of these in a two way array. Another scaling is inspired by the inner product space with respect to the Fisher-Yates distribution, so that the functions have norm $1$:
\[
\widetilde{f}_{ij}(T) =  \frac{T_{ij} - \lambda_i \mu_j/n}{\sqrt{c_{ij}}}, \quad c_{ij} := \frac{\lambda_i \mu_j(n - \lambda_i)(n - \mu_j)}{n^2(n-1)} = \E_{\pi_{\lambda, \mu}}\left[ (T_{ij} - \lambda_i \mu_j/n)^2 \right].
\]

Table \ref{tab: midtownPR} compares $\widehat{f}_{ij}$ and $\widehat{f}_{ij}$ for the Midtown Manhattan Mental Health dataset from Table \ref{tab: midtown}.
%Ask Persi about this???? ... Okay, in his comments he said this is a new idea,

\begin{table}[!htb]

    \begin{minipage}{.5\linewidth}
      \centering
\begin{tabular}{c | c c c c }
& Well & Mild & Moderate & Impaired  \\ \hline
A & 2.233 & -0.104 & 0.114 & -1.965 \\
B &  1.737 & 0.546 & 0.078 & -2.298 \\
C &  0.538 & 0.090 & 0.305 & -0.885 \\
D &  0.117 & 0.148 & -0.737 & 0.423 \\
E & -1.858 & 0.092 & -0.498 & 2.018\\
F & -3.020 & -0.867 & 0.971 & 2.826
\end{tabular} 
      \caption{The standard Pearson residuals, $\widehat{f}_{ij}$.}
    \end{minipage}%
    \begin{minipage}{.5\linewidth}
      \centering

\begin{tabular}{c | c c c c }
& Well & Mild & Moderate & Impaired  \\ \hline
A & 2.695 & -0.142 & 0.141 & -2.446 \\
B &  2.083 &  0.741 &  0.096 & -2.844 \\
C &  0.656 & 0.124 & 0.379 & -1.111 \\
D &  0.147 &  0.211 & -0.950 & 0.551 \\
E & -2.245 & 0.125 & -0.615 & 2.515 \\
F & -3.587 & -1.165 &  1.177 &  3.462
\end{tabular} 
       \caption{The linear eigenfunctions $\widehat{f}_{ij}$ standardized to have norm $1$.}
    \end{minipage} 
    
        \caption{Comparison of the different normalizations $\widehat{f}_{ij}$ and $\widetilde{f}_{ij}$ of the linear eigenfunctions, for the dataset from Table \ref{tab: midtown}.} \label{tab: midtownPR}
\end{table}

\paragraph{Chi-squared Decomposition.} It is natural to try to use the quadratic eigenfunctions in a similar way. They do not have an interpretation as (observed - expected) for some observed statistic, but they do have expectation $0$ with respect to $\pi_{\lambda, \mu}$.

%I can get higher moments as needed, such as the following which is needed to find the norm of the quadratic eigenfunctions:
%\begin{align*}
%    \E[T(i, j)T(k, l)T(i, l)T(k, j)] = \frac{\lambda_i(\lambda_i - 1)\lambda_k(\lambda_k - 1) \mu_j(\mu_j - 1)\mu_l(\mu_l - 1)}{n(n-1)(n-2)(n-3)}.
%\end{align*}

%`Lancaster [84], [85] laid the foundations for this subject through partitioning of Pearson’s chi square: just as a density can be decomposed using orthogonal func­ tions, so can any function '

A potential use for the polynomial eigenfunctions is in a decomposition of the chi-squared statistic
\begin{align} \label{eqn: chiTest}
\chi^2(T) = \sum_{i, j} \frac{(T_{ij} - \lambda_i \mu_j/n)^2}{\lambda_i \mu_j/n}.
\end{align}
Under the null hypothesis that row and column features are independent, $\chi^2(T)$ has a limiting chi-squared distribution with $(I-1)\cdot(J-1)$ degrees of freedom. A drawback of using $\chi^2(T)$ is that when it is large enough to reject the null hypothesis, no additional information is given about \emph{why} the sample fails the test. This motivates the subject of partitioning the statistic into terms which could give insight into what parts of the table fail the independence hypothesis. The thesis \cite{salzman} contains a thorough review of this problem, and a theory for decomposing $\chi^2(T)$ using Markov chains on the space $\{1, \dots, I \} \times \{1, \dots, J \}$. Presented below is a natural decomposition using the linear and quadratic eigenfunctions $f_{ij}$ and $f_{(i,j), (i, j)}$.

Let $M_{ij} = \lambda_i \mu_j/n$, $K_{ij}, L_{ij}$ be such that
\begin{align*}
&K_{ij} = \frac{2 \lambda_i \mu_j - 2 \lambda_i - 2 \mu_j + n}{n - 2} \\
&L_{ij} = \frac{\lambda_i \mu_j(1 + \lambda_i \mu_j - \lambda_i - \mu_j)}{(n - 1)(n-2)},
\end{align*}
so that $f_{(i, j), (i, j)}(T) = T_{ij}^2 - K_{ij}T_{ij} + L_{ij}$. Then,
\begin{align*}
    \chi^2(T) &= \sum \frac{1}{M_{ij}}(T_{ij} - M_{ij})^2 = \sum \frac{1}{M_{ij}}(T_{ij}^2 - 2M_{ij}T_{ij} + M_{ij}^2) \\
    &= \sum \frac{1}{M_{ij}}(T_{ij}^2 - K_{ij}T_{ij} + L_{ij} + (2M_{ij} - K_{ij})T_{ij} + M_{ij}^2 - L_{ij}) \\
    &= \sum_{i, j} \frac{1}{M_{ij}}f_{(i, j), (i, j)}(T) + \sum_{i, j} \frac{2M_{ij} - K_{ij}}{M_{ij}}f_{(i, j)}(T) + \sum_{i, j} \frac{M_{ij}^2 - L_{ij}}{M_{ij}}.
\end{align*}

%Okay, I don't think coefficients sum to 0 ... possibly if you normalize the eigenfunctions???

Figure \ref{fig: stats} shows histograms of the normalized quadratic eigenfunctions $f_{(i, j), (k, l)}/\E_\pi \left[ f_{(i, j), (k, l)} \right]$.

\subsection{Higher Dimensional Tables} \label{sec: multiway}

%Has an example 2x2x2 table from Agresti
%https://web.ntpu.edu.tw/~cflin/Teach/Cate/06CateUEN05ThreeWayPPT.pdf
%At least for thesis anyway
%Discuss same chain on higher dimensional tables? Note that I was incorrect about $H \backslash G / K$ for $G = S_n \times S_n, H = S_\lambda \times S_\rho, K = S_\mu \times \{e \}$ being in bijection with 3-way tables. It is true there is a map, but it is not injective.

The random swap Markov chain can be used as inspiration for Markov chains on $m$-way contingency tables with fixed margins for $m > 2$. This section describes how that could go for $m = 3$.
%----Needs some lit review about what is known, why do we care----
%----What is the Fisher-Yates distribution?--------

Let $\lambda, \mu, \rho$ be three partitions of $n$ with $|\lambda| = I, |\mu| = J, |\rho| = K$. Let $\mathcal{T}_{\lambda, \mu, \rho}$ be the set of three-way tables with fixed margins determined by $\lambda, \mu, \rho$. That is $T = \{T_{ijk}: 1 \le i \le I, 1 \le j, \le J, 1 \le k \le K  \} \in \T_{\lambda, \mu, \rho}$ if
\begin{align*}
\sum_{jk} T_{ijk} = \lambda_i, \quad \sum_{ik} T_{ijk} = \mu_j, \quad \sum_{ij} T_{ijk} = \rho_k, \quad \text{for all} \,\,\, 1 \le i \le I, 1 \le j \le J, 1 \le k \le K. 
\end{align*}
The partitions $\lambda, \mu, \rho$ are the sufficient statistics for the complete independence model: The probability of an entry being $(i, j, k)$ is $p_{ijk} = \theta_i \eta_j \gamma_k$. A table can by thought of as tri-partite hypergraph, where each edge connects exactly three vertices. 

Representing a table by a set of $n$ tuples $(i, j, k)$, we can describe a similar adjacent swap Markov chain as: Pick $2$ tuples $(i_1, j_1, k_1), (i_2, j_2, k_2)$, pick $r \in \{1, 2, 3 \}$ uniformly, and swap the $r$th entry in the two tuples. For example,
\[
(i_1, j_1, k_1), (i_2, j_2, k_2) \to \begin{cases}
(i_2, j_1, k_1), (i_1, j_2, k_2) \\
(i_1, j_2, k_1), (i_2, j_1, k_2) \\
(i_1, j_1, k_2), (i_2, j_2, k_1)
\end{cases}.
\]
This move still corresponds to adding $\begin{pmatrix} - 1 & 1 \cr 1 & -1 \end{pmatrix}$ to a $2 \times 2$ submatrix of the contingency table. The probability of picking the two tuples $2T_{i_1j_1k_1} \cdot T_{i_2j_2k_2}/n^2$. To be precise about the Markov chain, let $F_{(i_1, j_1, k_1), (i_2, j_2, k_2), r}(T)$ denote the table where the swap was made in the indicated entries of the table at the $r$ index, with $r \in \{1, 2, 3 \}$. Then,
\begin{align*}
    P(T, F_{(i_1, j_1, k_1), (i_2, j_2, k_2), r}(T)) = \frac{1}{3} \cdot \frac{2 T_{i_1j_1k_1}T_{i_2j_2k_2}}{n^2}.
\end{align*}

These same moves were proposed in \cite{diaconisSturmfels} as input for a Metropolis Markov chain for log-linear models on multi-way tables (Section 4.2); for that Markov chain the $2$ tuples to be swapped are chosen uniformly from all entries (and if the swap move results in a negative value in the table, it is rejected), thus the chain has uniform stationary distribution. 

\begin{theorem}
The Markov chain $P$ on $\T_{\lambda, \mu, \rho}$ is connected and reversible with respect to the natural analog of Fisher-Yates for 3-way tables:
\[
\pi_{\lambda, \mu, \rho}(T) := \frac{1}{n!} \prod_{i, j, k} \frac{\lambda_i! \mu_j! \rho_k!}{T_{ijk}!}.
\] 
\end{theorem}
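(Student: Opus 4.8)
The plan is to verify the two assertions separately: reversibility by checking detailed balance move by move, and connectivity by lifting the chain to a labelled ``data point'' picture.

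\emph{Reversibility.} Every move of the chain is undone by another move of the same kind: if $T' = F_{(i_1,j_1,k_1),(i_2,j_2,k_2),r}(T)$, then applying the axis-$r$ swap to $T'$ at the two cells that were just incremented returns $T$. So I would fix a bijection $m \mapsto m^{-1}$ between the moves carrying $T$ to $T'$ and the moves carrying $T'$ to $T$ (reversal), and check $\pi_{\lambda,\mu,\rho}(T)\,P_m(T,T') = \pi_{\lambda,\mu,\rho}(T')\,P_{m^{-1}}(T',T)$ for each matched pair, where $P_m$ is the contribution of the single move $m$ to the transition probability. For $r=1$, say, only the four cells $(i_1,j_1,k_1),(i_2,j_2,k_2)$ (decremented) and $(i_2,j_1,k_1),(i_1,j_2,k_2)$ (incremented) change, so the ratio $\pi_{\lambda,\mu,\rho}(T')/\pi_{\lambda,\mu,\rho}(T)$ is a product of four factorial ratios equal to $\dfrac{T_{i_1j_1k_1}T_{i_2j_2k_2}}{(T_{i_2j_1k_1}+1)(T_{i_1j_2k_2}+1)}$, while $P_m(T,T')/P_{m^{-1}}(T',T) = \dfrac{T_{i_1j_1k_1}T_{i_2j_2k_2}}{(T_{i_2j_1k_1}+1)(T_{i_1j_2k_2}+1)}$ as well; the other values of $r$ and the degenerate cases where two chosen tuples agree in a coordinate are identical in form. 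Summing the matched identities over all $m$ gives $\pi_{\lambda,\mu,\rho}(T)P(T,T') = \pi_{\lambda,\mu,\rho}(T')P(T',T)$ for every pair $T,T'$, which is reversibility; since $\pi_{\lambda,\mu,\rho}$ is strictly positive on $\T_{\lambda,\mu,\rho}$ it is (after normalization) the stationary distribution.

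\emph{Connectivity.} The cleanest route is to record a table by a triple of colourings of $n$ labelled data points, $\alpha:[n]\to[I]$, $\beta:[n]\to[J]$, $\gamma:[n]\to[K]$, with colour-class sizes $\lambda,\mu,\rho$, so that $T_{ijk}$ counts the points with $(\alpha,\beta,\gamma)=(i,j,k)$; the table depends only on this triple up to simultaneously relabelling the $n$ points. A single axis-$r$ swap of two data points is precisely a transposition applied to the $r$-th colouring, leaving the other two fixed. Hence in the labelled picture I can first bring $\gamma$ to any prescribed colouring using only axis-$3$ swaps (transpositions generate $S_n$), then bring $\alpha$ to its target using axis-$1$ swaps, then $\beta$ using axis-$2$ swaps. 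Thus any two labelled triples with the same class sizes are joined by a path of moves; projecting this path to tables — each triple-move induces either a loop or a single genuine transition of $P$ on $\T_{\lambda,\mu,\rho}$, and a genuine transition has positive probability because the decremented cells are nonempty — shows the table chain is connected (and it is aperiodic, since selecting the same data point twice is a holding move).

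The routine part is the detailed-balance algebra; the only place needing a little care there is the bookkeeping that in degenerate configurations (two selected tuples agreeing in a coordinate) the \emph{same} table move arises from two different axes, but since this multiplicity is inherited symmetrically by the reverse move it cancels. I do not expect a real obstacle: the lifting argument sidesteps the fact that the three coordinate-colourings cannot be permuted independently once the data points are unlabelled, and it does not even require the (classical) connectivity of the $2$-way swap chain recalled earlier in the excerpt.
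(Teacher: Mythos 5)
Your reversibility argument is essentially the same as the paper's: compute the ratio $\pi_{\lambda,\mu,\rho}(T')/\pi_{\lambda,\mu,\rho}(T)$ as a product of factorial ratios and verify that it cancels against the ratio of transition probabilities; you are right that this is routine, and you are right that the degenerate cases (two chosen tuples agreeing in a coordinate, so that two different axes yield the same table move) contribute the same multiplicity to $P(T,T')$ and $P(T',T)$, so detailed balance survives the summation over moves.

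For connectivity your route is genuinely different from the paper's, and arguably more solid. The paper introduces a three-dimensional analogue of the majorization order, notes that every $P$-edge is comparable in this order, and asserts that therefore every table can be moved to the largest element; as written, that last step is not justified (comparability of edges does not by itself imply that from a non-maximal table there is always a move strictly up). Your argument sidesteps the order entirely: lift each table to a triple of colourings $(\alpha,\beta,\gamma)$ of $n$ labelled data points, observe that an axis-$r$ swap is exactly a transposition applied to the $r$-th colouring while fixing the other two, and then use that transpositions generate $S_n$ to bring $\gamma$, then $\alpha$, then $\beta$ to prescribed targets one at a time. Since any lift of a target table will do, and each lifted step projects either to a holding move or to a legitimate positive-probability transition of $P$, this gives a concrete path between arbitrary tables. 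The paper's approach has the virtue of connecting to the majorization machinery used elsewhere in the article, but your labelled-points argument is self-contained, elementary, and does not require patching the gap in the order-theoretic step. Both establish the theorem; yours is the cleaner proof of connectivity.
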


\begin{proof}
%NTDNN clean this up, add more details
To see that the Markov chain is connected, we can define a partial order on the space $\T_{\lambda, \mu, \rho}$, analagous to the majorization order on $\T_{\lambda, \mu}$ used in Proposition \ref{prop: CTorder}: Write $T \prec T'$ if 
\[
\sum_{r = 1}^R \sum_{s = 1}^S \sum_{t = 1}^T T_{i_r j_s k_t} \le \sum_{r = 1}^R \sum_{s = 1}^S \sum_{t = 1}^T T'_{i_r j_s k_t}, \quad \text{for all} \,\,\, 1 \le R \le I, 1 \le s \le J, 1 \le T \le K.
\]
One move of the Markov chain corresponds to one edge in the lattice defined by this partial order (i.e.\ if $P(T, T') > 0$ then $T \prec T'$ or $T' \prec T$). Thus, every table can transition to the largest element and so the space is connected under $P$.

To see that the chain is reversible with the correct stationary distribution, given $T \in \T_{\lambda, \mu, \rho}$ suppose $T' = F_{(i_1, j_1, k_1), (i_2, j_2, k_2), r}(T)$ with $r = 1$ so that $P(T, T') > 0$. Then,
\begin{align*}
    \pi_{\lambda, \mu, \rho}(T') = \pi_{\lambda, \mu, \rho}(T) \cdot \frac{T_{i_1j_1k_1}T_{i_2j_2k_2}}{(T_{i_2j_1k_1} + 1)(T_{i1j_2k_2} + 1)}.
\end{align*}
Then,
\begin{align*}
    \pi_{\lambda, \mu, \rho}(T') P(T', T) &= \pi_{\lambda, \mu, \rho}(T) \cdot \frac{T_{i_1j_1k_1}T_{i_2j_2k_2}}{(T_{i_2j_1k_1} + 1)(T_{i1j_2k_2} + 1)} \cdot \frac{2 (T_{i_2j_1k_1} + 1)(T_{i1j_2k_2} + 1)}{3n^2} \\
    &=  \pi_{\lambda, \mu, \rho}(T) \cdot \frac{2T_{i_1j_1k_1}T_{i_2j_2k_2}}{3 n^2} = \pi_{\lambda, \mu, \rho}(T) P(T, T').
\end{align*}
The calculation is analogous for $r = 2, 3$.
\end{proof}
%SOS prove this or else state it as a conjecture}
%It is also related to Section 4.2 in Persi-Sturmfels paper -- they define the symmetric swap chain on multi-way tables (with uniform distribution) .. but also do a general set-up?? It's worht taking 30 minutes to understand and write a paragraph. Also state that my chain is reversible and connected.

\begin{remark}
It would be interesting to have a double-coset representation for $\T_{\lambda, \mu, \rho}$, but we have not discovered one. 
%NTDNN -- discuss the below attempt, why does it not work?
%A natural guess would be to take $G = S_n \times S_n$, $H = S_\lambda \times S_\rho, K = S_\mu \times \{id \}$.

\end{remark}

\subsection{Comparison of Markov Chains} \label{sec: metropolisCT}

This new chain on contingency tables can be compared to other Markov chains on contingency tables using simple spectral comparison techniques, as developed in \cite{diaconisSC}. 

The comparison technique relies on the variational characterization of the spectral gap of a Markov chain: If $P$ is a reversible Markov chain on $\Omega$ with stationary distribution $\pi$ and $\gamma = 1 - \lambda_2$ the spectral gap, then
\[
\gamma = \min_{f: \Omega \to \R: \text{Var}_\pi(f)} \frac{\mathcal{E}(f)}{\text{Var}_\pi(f)},
\]
where
\[
\mathcal{E}(f) := \frac{1}{2} \sum_{x, y \in \Omega} \left( f(x) - f(y) \right)^2 \pi(x) P(x, y).
\]
The \emph{relaxation time} $\tau$ is defined as the inverse of the absolute spectral gap $\tau = 1/\gamma$, and can be used as one measure of mixing. 

\begin{lemma}[Lemma 13.22 in \cite{levinPeres}]
Let $P$ and $\widetilde{P}$ be reversible transition matrices on $\Omega$ with stationary distributions $\pi$ and $\widetilde{\pi}$, respectively. If $\widetilde{\mathcal{E}}(f) \le \alpha \mathcal{E}(f)$ for all $f$, then
\begin{equation} \label{eqn: comparison}
\widetilde{\gamma} \le \left( \max_{x \in \Omega} \frac{\pi(x)}{\widetilde{\pi}(x)} \right) \alpha \gamma.
\end{equation}
\end{lemma}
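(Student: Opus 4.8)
The plan is to read the bound straight off the variational (Rayleigh quotient) characterization of the spectral gap quoted just above the lemma, by evaluating that formula for $\widetilde P$ at the function which optimizes it for $P$. Concretely, first I would choose a non-constant $f^\star:\Omega\to\R$ attaining the minimum in
\[
\gamma=\min_{f:\ \operatorname{Var}_\pi(f)\neq 0}\frac{\mathcal{E}(f)}{\operatorname{Var}_\pi(f)},
\]
so that $\gamma=\mathcal{E}(f^\star)/\operatorname{Var}_\pi(f^\star)$ with $\operatorname{Var}_\pi(f^\star)>0$ (a second-largest-eigenvalue eigenfunction of $P$ does the job). Since adding a constant to $f^\star$ changes neither $\mathcal{E}(f^\star)$ nor $\operatorname{Var}_\pi(f^\star)$, I normalize $f^\star$ so that $\E_{\widetilde\pi}[f^\star]=0$.

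The one computation to do is to compare the two variances. With this normalization $\widetilde{\operatorname{Var}}_{\widetilde\pi}(f^\star)=\sum_x f^\star(x)^2\,\widetilde\pi(x)$, while $\operatorname{Var}_\pi(f^\star)\le\sum_x f^\star(x)^2\,\pi(x)$ since a variance never exceeds the second moment about any point. Writing $\pi(x)=\widetilde\pi(x)\cdot\bigl(\pi(x)/\widetilde\pi(x)\bigr)$ and pulling out the largest ratio (all summands are nonnegative) gives
\[
\operatorname{Var}_\pi(f^\star)\le\Bigl(\max_{x\in\Omega}\frac{\pi(x)}{\widetilde\pi(x)}\Bigr)\widetilde{\operatorname{Var}}_{\widetilde\pi}(f^\star);
\]
in particular $\widetilde{\operatorname{Var}}_{\widetilde\pi}(f^\star)>0$, so $f^\star$ is admissible in the variational formula for $\widetilde P$. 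Feeding $f^\star$ into that formula, then invoking the hypothesis $\widetilde{\mathcal{E}}(f^\star)\le\alpha\,\mathcal{E}(f^\star)$, and finally the variance comparison, I get
\[
\widetilde\gamma\le\frac{\widetilde{\mathcal{E}}(f^\star)}{\widetilde{\operatorname{Var}}_{\widetilde\pi}(f^\star)}\le\frac{\alpha\,\mathcal{E}(f^\star)}{\widetilde{\operatorname{Var}}_{\widetilde\pi}(f^\star)}\le\Bigl(\max_{x\in\Omega}\frac{\pi(x)}{\widetilde\pi(x)}\Bigr)\alpha\,\frac{\mathcal{E}(f^\star)}{\operatorname{Var}_\pi(f^\star)}=\Bigl(\max_{x\in\Omega}\frac{\pi(x)}{\widetilde\pi(x)}\Bigr)\alpha\,\gamma,
\]
which is \eqref{eqn: comparison}.

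There is no genuine obstacle: the proof is exactly the chain of three inequalities above. The only spots needing a word of justification are (i) that the optimizing $f^\star$ can be taken non-constant so that all the Rayleigh quotients in question are well-defined, and (ii) that recentering $f^\star$ under $\widetilde\pi$ leaves $\mathcal{E}(f^\star)$ and $\operatorname{Var}_\pi(f^\star)$ untouched — both are immediate. Throughout, $\gamma$ and $\widetilde\gamma$ are $1-\lambda_2$ as in the quoted variational formula; when $P$ and $\widetilde P$ are lazy (equivalently, have nonnegative spectrum) this equals the absolute spectral gap, so in that case the inequality also compares the relaxation times $\tau=1/\gamma$ and $\widetilde\tau=1/\widetilde\gamma$ used in this section.
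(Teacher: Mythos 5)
Your argument is correct, and it is the standard textbook proof of this comparison lemma. Note, though, that the paper does not actually prove this statement — it simply cites it as Lemma 13.22 of Levin--Peres and uses it as a black box — so there is no "paper's proof" to compare against; your write-up supplies the proof that the paper delegates to the reference. The three-step chain (normalize the $P$-optimizer $f^\star$ to have $\widetilde\pi$-mean zero, compare $\operatorname{Var}_\pi(f^\star)$ to $\widetilde{\operatorname{Var}}_{\widetilde\pi}(f^\star)$ via the density ratio, then feed $f^\star$ into the variational formula for $\widetilde P$ and invoke $\widetilde{\mathcal E}\le\alpha\mathcal E$) is exactly the argument in Levin--Peres, and your two parenthetical justifications — that $f^\star$ may be taken non-constant and that shifting by a constant leaves $\mathcal E$ and $\operatorname{Var}_\pi$ unchanged — are the right ones. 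The final remark about lazy chains and relaxation time is a reasonable clarification, consistent with how the paper uses $\tau=1/\gamma$ in Theorem \ref{thm: metropolisComparison}.
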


The term $\max_{x \in \Omega} \pi(x)/\widetilde{\pi}(x)$ makes it difficult to compare Markov chains with stationary distributions which are very different. The uniform distribution and the Fisher-Yates distribution on $\T_{\lambda, \mu}$ are exponentially different and the ratio of the distribution cannot be bounded by a constant. 

\begin{example}
With $\lambda = (n - k, k), \mu = (n - \ell, \ell)$, with $k \le \ell \le n/2$, there are $k$ tables. The table with the smallest Fisher-Yates probability is
\[
T = \begin{pmatrix}
n - \ell - k & \ell \cr
k & 0
\end{pmatrix},
\]
with $\pi_{FY}(T) = (n - k)!(n- \ell)!/((n - k - \ell)! n!)$. Taking $k = \ell = n/2$ for example gives
\[
\pi_{FY}(T) = \frac{(n/2)!(n/2)!}{n!} \sim \frac{(n/2)^{n/2}(n/2)^{n/2}}{n^n} = \frac{1}{2^n},
\]
by Stirling's approximation. This is in sharp contrast with the uniform probability $\pi_U(T) = 1/k = 2/n$.
\end{example}

Instead, we can use the Metropolis algorithm on either the symmetric swap Markov chain or the random transpositions Markov chain to get two chains with the same distribution to compare. This is done in both cases in the following sections.

Let $P_{FY}$ be the random transpositions Markov chain on $\T_{\lambda, \mu}$ from Definition \ref{def: chain}. Let $P_U$ be the symmetric swap Markov chain which has stationary uniform distribution. That is,
\[
P_U(\mx, \my) = \begin{cases} \frac{2}{(IJ)^2} & \text{if} \quad \my = F_{(i_1, j_1), (i_2, j_2)}(\mx) \\
1 - \sum_{i_1 < i_2} \sum_{j_2 < j_2} \frac{2}{(IJ)^2} & \text{if} \quad \my = \mx \\
0 & \text{otherwise}
\end{cases}.
\]
Let $P_U^M$ be the chain from Metropolizing $P_{FY}$ to have uniform stationary distribution and $P_{FY}^M$ be the result of Metropolizing $P_U$ to have stationary distribution Fisher-Yates.

\begin{theorem} \label{thm: metropolisComparison}
Let $\tau_{FY}, \tau_{FY}^M, \tau_U, \tau_U^M$ be the associated relaxation times for the Markov chains $P_{FY}, P_{FY}^M, P_U, P_U^M$. Define
\begin{align*}
    &m_\lambda = \min \{ x_{ij} > 0 : \mx \in \T_{\lambda, \mu} \} \\
    &M_\lambda = \max \{ x_{ij} > 0 : \mx \in \T_{\lambda, \mu} \}
\end{align*}
If $|\lambda| = I, |\mu| = J$, then
\begin{enumerate}[(a)]
    \item 
\[
\frac{m_{\lambda, \mu}^2 (IJ)^2}{n^2} \tau_U^M  \le  \tau_U \le \frac{M_{\lambda, \mu}^2 (IJ)^2}{n^2} \tau_U^M,
\]

\item 
\[
 \frac{n^2}{(IJ)^2M_{\lambda, \mu}^4} \tau_{FY}^M \le \tau_{FY} \le \frac{n^2}{(IJ)^2m_{\lambda, \mu}^4} \tau_{FY}^M.
\]
\end{enumerate}
For any $\lambda, \mu$, note that $m_{\lambda, \mu} \ge 1$ and $M_{\lambda, \mu} \le \max(\lambda_i, \mu_j)$.
\end{theorem}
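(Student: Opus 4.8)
The plan is to apply the comparison lemma (Lemma 13.22) twice in each direction, after first computing the transition probabilities of the two Metropolized chains explicitly. Recall that for both original chains the off-diagonal moves are exactly the swap moves $\my = F_{(i_1,j_1),(i_2,j_2)}(\mx)$, so the Dirichlet forms $\mathcal{E}_{FY}, \mathcal{E}_U, \mathcal{E}_{FY}^M, \mathcal{E}_U^M$ all sum over the \emph{same} set of edges $(\mx,\my)$; only the edge weights $\pi(\mx)P(\mx,\my)$ differ. This is what makes the comparison clean: for each chain I only need the quantity $w(\mx,\my) := \pi(\mx) P(\mx,\my)$ on a fixed edge, and by reversibility this is symmetric in $\mx,\my$.

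First I would record the four edge weights. For $P_U$: $w_U(\mx,\my) = |\T_{\lambda,\mu}|^{-1} \cdot 2/(IJ)^2$. For $P_{FY}$: $w_{FY}(\mx,\my) = \pi_{\lambda,\mu}(\mx) \cdot 2 x_{i_1 j_1} x_{i_2 j_2}/n^2$; note $\pi_{\lambda,\mu}(\mx) x_{i_1 j_1} x_{i_2 j_2}$ is symmetric in $\mx \leftrightarrow \my$ (swapping multiplies $\pi$ by $x_{i_1 j_1} x_{i_2 j_2}/((x_{i_1 j_2}+1)(x_{i_2 j_1}+1))$ and multiplies the product of the two decremented entries correspondingly), which re-derives reversibility. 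For $P_U^M$ (Metropolis of $P_{FY}$ targeting uniform): the acceptance ratio on the edge is $\min(1, \pi_{FY}(\mx)/\pi_{FY}(\my))$, so $w_U^M(\mx,\my) = |\T|^{-1} \cdot (2 x_{i_1 j_1} x_{i_2 j_2}/n^2)\cdot \min(1,\pi_{FY}(\mx)/\pi_{FY}(\my))$; combining with the reversibility identity this equals $|\T|^{-1}\cdot (2/n^2)\min(x_{i_1 j_1} x_{i_2 j_2},\, (x_{i_1 j_2}+1)(x_{i_2 j_1}+1))$ — a symmetric expression, as it must be. For $P_{FY}^M$ (Metropolis of $P_U$ targeting Fisher–Yates): $w_{FY}^M(\mx,\my) = \pi_{FY}(\mx)\cdot (2/(IJ)^2)\min(1,\pi_{FY}(\my)/\pi_{FY}(\mx))= (2/(IJ)^2)\min(\pi_{FY}(\mx),\pi_{FY}(\my))$.

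Next, with $m=m_{\lambda,\mu}$ and $M=M_{\lambda,\mu}$ the smallest/largest positive entries that ever appear, I would bound the ratio of edge weights uniformly over edges. For part (a): since on any active edge $x_{i_1 j_1}, x_{i_2 j_2} \ge 1$ and the two incremented entries of $\my$ are $\le M$, we get $m^2 \le x_{i_1 j_1} x_{i_2 j_2} \le M^2$ and likewise $1 \le (x_{i_1 j_2}+1)(x_{i_2 j_1}+1) \le M^2$, hence $\min(\cdots) \le M^2$ and, using symmetry of the min (look at whichever of $\mx,\my$ makes both relevant entries positive), $\min(\cdots)\ge m^2$. Therefore $w_U^M(\mx,\my)/w_U(\mx,\my) \in [m^2(IJ)^2/n^2,\; M^2(IJ)^2/n^2]$, which gives $\mathcal{E}_U \le \alpha \mathcal{E}_U^M$ with $\alpha = n^2/(m^2(IJ)^2)$ in one direction and the reverse with $\alpha = n^2/(M^2(IJ)^2)$; since both chains share the stationary distribution $\pi_U$, the $\max \pi/\widetilde\pi$ factor in \eqref{eqn: comparison} is $1$ and we get $\gamma_U \le (n^2/(m^2(IJ)^2))\,\gamma_U^M$ and $\gamma_U^M \le (n^2/(M^2(IJ)^2))\,\gamma_U$; inverting to relaxation times yields the displayed chain of inequalities. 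For part (b): comparing $w_{FY}^M$ and $w_{FY}$ on an edge, $w_{FY}^M/w_{FY} = \big[(IJ)^{-2}\min(\pi_{FY}(\mx),\pi_{FY}(\my))\big]/\big[\pi_{FY}(\mx)\,x_{i_1 j_1}x_{i_2 j_2}/n^2\big]$; writing $\min(\pi_{FY}(\mx),\pi_{FY}(\my)) = \pi_{FY}(\mx)\min(1,\pi_{FY}(\my)/\pi_{FY}(\mx))$ and using $\pi_{FY}(\my)/\pi_{FY}(\mx) = x_{i_1 j_1}x_{i_2 j_2}/((x_{i_1 j_2}+1)(x_{i_2 j_1}+1)) \in [m^2/M^2,\, M^2/m^2]$, one bounds this ratio between $n^2/((IJ)^2 M^4)$ and $n^2/((IJ)^2 m^4)$. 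Again the two chains share $\pi_{FY}$, so \eqref{eqn: comparison} with $\max\pi/\widetilde\pi = 1$ gives the two inequalities, and inverting gives the stated bounds on $\tau_{FY}$ versus $\tau_{FY}^M$. The final sentence $m_{\lambda,\mu}\ge 1$ is immediate from "positive entry", and $M_{\lambda,\mu}\le \max(\lambda_i,\mu_j)$ because any entry $x_{ij}$ is at most its row sum $\lambda_i$ and at most its column sum $\mu_j$.

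The main obstacle is purely bookkeeping rather than conceptual: one must be careful that the $\min$ appearing in the Metropolis weights is genuinely symmetric on each edge (so that the edge-weight ratios are well-defined quantities, not depending on an arbitrary orientation), and that the ranges $[m^2/M^2, M^2/m^2]$ for the density ratio $\pi_{FY}(\my)/\pi_{FY}(\mx)$ are applied with the correct direction of inequality when passing between $\mathcal{E}$ and $\mathcal{E}^M$. A secondary subtlety is that $m_{\lambda,\mu}$ and $M_{\lambda,\mu}$ must be defined as extrema over \emph{positive} entries across \emph{all} tables in $\T_{\lambda,\mu}$, since the relevant entries $x_{i_1 j_1}, x_{i_2 j_2}$ on an active edge, and the incremented entries $x_{i_1 j_2}+1, x_{i_2 j_1}+1$, must all be captured; once the definitions are pinned down, the uniform bounds $m^2 \le x_{i_1 j_1}x_{i_2 j_2} \le M^2$ etc.\ are immediate and the two applications of Lemma 13.22 finish the proof.
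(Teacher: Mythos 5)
Your plan is the same as the paper's proof: write down the two Metropolized kernels $P_U^M(\mx,\my)=\min\!\bigl(P_{FY}(\mx,\my),P_{FY}(\my,\mx)\bigr)$ and $P_{FY}^M(\mx,\my)=\tfrac{2}{(IJ)^2}\min\!\bigl(1,\pi_{FY}(\my)/\pi_{FY}(\mx)\bigr)$, note that on every active edge all four relevant entries $x_{i_1j_1}, x_{i_2j_2}, x_{i_1j_2}+1, x_{i_2j_1}+1$ lie in $[m_{\lambda,\mu},M_{\lambda,\mu}]$, bound the ratio of transition probabilities (equivalently edge weights, since in each pair the stationary distribution is shared and the $\max\pi/\widetilde\pi$ factor is $1$), and invoke Lemma 13.22 twice in each direction. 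Your reframing via the symmetric weights $w(\mx,\my)=\pi(\mx)P(\mx,\my)$ is tidier bookkeeping but carries the same content.

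One shared caveat worth flagging, since you wrote ``one bounds this ratio between $n^2/((IJ)^2M^4)$ and $n^2/((IJ)^2m^4)$'' without spelling it out: the crude bounds you set up ($\min(1,\cdot)\le 1$, $x_{i_1j_1}x_{i_2j_2}\ge m^2$) actually give the weaker upper coefficient $n^2/((IJ)^2m^2)$, and the $m^4$ claimed in the theorem follows only because $m^4\le m^2$ forces $m_{\lambda,\mu}=1$; symmetrically the lower coefficient you wrote drops a factor $m^2\ge 1$ from $n^2m^2/((IJ)^2M^4)$. The paper's own chain of displayed equalities in this step has the same wrinkle (the intermediate ``$=$'' only holds when $m_{\lambda,\mu}^2M_{\lambda,\mu}^2=1$), so you have not deviated from the source; but if $m_{\lambda,\mu}>1$ were possible the theorem's constants would need $m^2$ and $m^2/M^4$ rather than $m^4$ and $1/M^4$. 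Since $m_{\lambda,\mu}=1$ in every non-degenerate $\T_{\lambda,\mu}$, the statement stands.
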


%NTDNN would be great to have a figure, not sure what it would look like ... Okay, I think the easiest/possible thing would be: Pick a test statistic, easy one would be $(1, 1)$ entry so that we know what is the distribution (under $FY$, not sure about uniform). We can just keep track of this test-statistic for $TV$ distance, run the different chains and hopefully mine mixes faster ... Though the point of this comparison was really to compare my FY metropolized to the uniform, hmm. That's an interesting question in its own right: What is distribution of a single table entry under uniform?}

\begin{example}
If $\lambda = \mu = (c, c, \dots, c)$, $I = J = n/c$, then $m_{\lambda, \mu} = 1$ and $M_{\lambda, \mu} = c$. Then Theorem \ref{thm: metropolisComparison} shows
\[
\frac{n^2}{c^4} \tau_U^M \le \tau_U \le \frac{n^2}{c^2} \tau_U^M.
\]
The intuition is that if $n$ is growing and $c$ is fixed then the tables are relatively sparse, and so the Metropolis chain is much more likely than the uniform swap chain to propose moves which are accepted. Accepting more moves means exploring the state space more quickly, and thus achieving stationarity. Similarly, for the chains with Fisher-Yates stationary distribution in this setting,
\[
\frac{1}{n^2} \tau_{FY}^M \le \tau_{FY} \le \frac{c^4}{n^2} \tau_{FY}^M.
\]
Again, if $c$ is fixed and $n$ is larger the Markov chain utilizing the random transpositions has significantly smaller relaxation time than the Markov chain created by using the uniform swap moves with the Metropolis algorithm. Note that if $c = 1$, then the Fisher-Yates distribution is exactly the uniform distribution. 
\end{example}

The remainder of this section reviews the Metropolis-Hastings Algorithm, finds the transition probabilities $P_U^M$ and $P_{FY}^M$, and then proves Theorem \ref{thm: metropolisComparison}.

\paragraph{Metropolis-Hastings Algorithm} The random transpositions chain could be used as the proposal Markov chain in Metropolis-Hastings algorithm to sample from the uniform distribution. Suppose $P$ is a Markov chain on $\Omega$ and $\pi$ is a target distribution. A new Markov chain on $\Omega$ which is reversible with respect to $\pi$ is defined by: From $x$
\begin{enumerate}
    \item Generate a random candidate $y$ according to one step of the $P$ chain, i.e.\ $y \sim P(x, y)$.
    
    \item Compute the \emph{acceptance probability}
    \[
    A(x, y) = \min \left( 1, \frac{\pi(y) P(y, x)}{\pi(x) P(x, y)} \right).
    \]
    
    \item With probability $A(x, y)$, move to $y$. Otherwise, stay at the current state $x$.
\end{enumerate}

The transitions for the new Markov chain are defined
\[
K(x, y) = P(x, y) \cdot A(x, y).
\]

\paragraph{Uniform Distribution Comparison}
For two states $\mx, \my \in \T_{\lambda, \mu}$ such that $P(\mx, \my) > 0$, the acceptance probability can be computed. One way to see this is to note that since $P$ is reversible with respect to the Fisher-Yates distribution $\pi_{\lambda, \mu}$, it is
\[
\frac{P(\my, \mx)}{P(\mx, \my)} = \frac{\pi_{\lambda, \mu}(\mx)}{\pi_{\lambda, \mu}(\my)} = \frac{(x_{i_1, j_2} + 1)(x_{i_2, j_1} + 1)}{x_{i_1, j_1} \cdot x_{i_2, j_2}},
\]
supposing that $\my = F_{(i_1, j_1), (i_2, j_2)}(\mx)$. 

In conclusion, the Metropolis chain has transitions
\begin{align*}
P_U^M(\mx, \my) &= \frac{2x_{i_1, j_1} x_{i_2, j_2}}{n^2} \cdot \min \left( 1, \frac{(x_{i_1, j_2} + 1)(x_{i_2, j_1} + 1)}{x_{i_1, j_1} \cdot x_{i_2, j_2}} \right) =  \min \left( \frac{2x_{i_1, j_1} x_{i_2, j_2}}{n^2}, \frac{(2x_{i_1, j_2} + 1)(x_{i_2, j_1} + 1)}{n^2} \right) \\
&= \min \left( P_{FY}(\mx, \my), P_{FY}(\my, \mx) \right).
\end{align*}
This is clearly a symmetric Markov chain and has uniform stationary distribution. Note that this in general, any chain $P(x, y)$ when transformed via Metropolis to give the uniform stationary distribution is if the form $P_U(x, y) = \min(P(x, y), P(y, x))$.

To make the comparison of Dirichlet forms, since the stationary distributions are the same it is enough to compare the transition probabilities. This can be done,
\[
P_U(\mx, \my) = \frac{n^2}{m_{\lambda, \mu}^2 (IJ)^2} \cdot \frac{2 m_{\lambda, \mu}^2}{n^2} \le \frac{n^2}{m_{\lambda, \mu}^2 (IJ)^2} P_U^M(\mx, \my),
\]
and similarly
\[
P_{FY}(\mx, \my) = \frac{n^2}{M_{\lambda, \mu}^2 (IJ)^2} \cdot \frac{2 M_{\lambda, \mu}^2}{n^2} \ge \frac{n^2}{M_{\lambda, \mu}^2 (IJ)^2} P_U^M(\mx, \my).
\]
This gives the result, since
\begin{align*}
    \alpha_1 P_U^M(\mx, \my) \le P_U(\mx, \my) \le \alpha_2 P_U^M(\mx, \my) \implies \frac{1}{\alpha_2} \tau_U^M \le \tau_U \le \frac{1}{\alpha_1} \tau_U^M.
\end{align*}

\paragraph{Fisher-Yates Distribution Comparison}

In the other direction, we could take the symmetric swap Markov chain and Metropolize it to have stationary distribution Fisher-Yates. The acceptance probability in this direction is
\begin{align*}
A(\mx, \my) &= \min \left(1, \frac{\pi_{FY}(\my) P_U(\my, \mx)}{\pi_{FY}(\mx) P_U(\mx, \my)} \right) \\
&= \min \left(1, \frac{\pi_{FY}(\my)}{\pi_{FY}(\mx) } \right) = \min \left(1, \frac{x_{i_1, j_1} \cdot x_{i_2, j_2}}{(x_{i_1, j_2} + 1)(x_{i_2, j_1} + 1)} \right).
\end{align*}
The transition probabilities, for $\my = F_{(i_1, j_1), (i_2, j_2)}(\mx)$ are then
\begin{align*}
    P_{FY}^M(\mx, \my) = \frac{2}{(IJ)^2} \min \left(1, \frac{x_{i_1, j_1} \cdot x_{i_2, j_2}}{(x_{i_1, j_2} + 1)(x_{i_2, j_1} + 1)} \right).
\end{align*}
To compare this with the random transpositions chain $P_{FY}$,
\begin{align*}
    P_{FY}(\mx, \my) &= \frac{2 x_{i_1,j_1} x_{i_2, j_2}}{n^2} \le \frac{2 M_{\lambda,\mu}^2}{n^2} \\
   & =\frac{(IJ)^2M_{\lambda, \mu}^4}{n^2} \cdot \frac{2 m_{\lambda, \mu}^2}{M_{\lambda, \mu}^2(IJ)^2} \le \frac{(IJ)^2M_{\lambda, \mu}^4}{n^2} \cdot  P_{FY}^M(\mx, \my)
\end{align*}
And similarly
\begin{align*}
P_U(\mx, \my) &= \frac{2 x_{i_1,j_1} x_{i_2, j_2}}{n^2} \ge \frac{2 m_{\lambda,\mu}^2}{n^2} \\
   & =\frac{(IJ)^2m_{\lambda, \mu}^4}{n^2} \cdot \frac{2 M_{\lambda, \mu}^2}{m_{\lambda, \mu}^2(IJ)^2} \ge \frac{(IJ)^2m_{\lambda, \mu}^4}{n^2}  \cdot  P_{FY}^M(\mx, \my)
\end{align*}

This proves part (b) of Theorem \ref{thm: metropolisComparison}.

\subsection{Parabolic Subgroups of $GL_n$} \label{sec: GLnparabolic}
%Just do a long version for my thesis and then can cut it down later

Contingency tables also arise from double cosets using the parabolic subgroups of $GL_n(q)$, as described in \cite{karpThomas}.

\begin{definition}
Let $\alpha = (\alpha_1, \dots, \alpha_k)$ be a partition of $n$. The parabolic subgroup $P_\alpha \subset GL_n(q)$ consists of all invertible block upper-triangular matrices with diagonal block sizes $\alpha_1, \dots, \alpha_k$
\end{definition}

Section 4 of \cite{karpThomas} shows that if $\alpha, \beta$ are two partitions of $n$, the double cosets $P_\alpha \backslash GL_n(q) / P_\beta$ are indexed by contingency tables with row sums $\alpha$ and column sums $\beta$. Proposition 4.37 contains the size of a double-coset which corresponds to the table $T$, with $\theta = 1/q$
\begin{equation} \label{eqn: parabolicGLn}
\theta^{- n^2 + \sum_{1 \le i < i' \le I, 1 \le j < j' \le J} T_{ij} T_{i'j'}}(1 - \theta)^n \cdot \frac{\prod_{i = 1}^I[\alpha_i]_\theta! \prod_{j = 1}^J [\beta_j]_\theta!}{\prod_{i, j} [M_{ij}]_\theta!}, \quad I = |\alpha|, J = |\beta|.
\end{equation}
Dividing \eqref{eqn: parabolicGLn} by $(1-\theta)^n$ and setting $\theta = 1$ recovers the usual Fisher-Yates distribution for partitions $\alpha, \beta$. It remains an open problem to investigate these probability distributions and Markov chains on the double cosets of $GL_n(q)$ from parabolic subgroups.

\bibliographystyle{abbrv}
\bibliography{citations}

\begin{thebibliography}{10}

\bibitem{agresti92}
A.~Agresti.
\newblock A survey of exact inference for contingency tables.
\newblock {\em Statist. Sci.}, 7(1):131--177, 1992.
\newblock With comments and a rejoinder by the author.

\bibitem{amanatidis2020}
G.~Amanatidis and P.~Kleer.
\newblock Rapid mixing of the switch {M}arkov chain for strongly stable degree
  sequences.
\newblock {\em Random Structures \& Algorithms}, 57(3):637--657, 2020.

\bibitem{belsley}
E.~D. Belsley.
\newblock Rates of convergence of random walk on distance regular graphs.
\newblock {\em Probability theory and related fields}, 112(4):493--533, 1998.

\bibitem{billeyKonvPeterSlofstra}
S.~C. Billey, M.~Konvalinka, T.~K. Petersen, W.~Slofstra, and B.~E. Tenner.
\newblock Parabolic double cosets in coxeter groups, 2017.

\bibitem{bormashenko2011coupling}
O.~Bormashenko.
\newblock A coupling argument for the random transposition walk.
\newblock {\em arXiv preprint arXiv:1109.3915}, 2011.

\bibitem{ceccherini}
T.~Ceccherini~Silberstein, F.~Scarabotti, and F.~Tolli.
\newblock {\em Harmonic analysis on finite groups}, volume 108.
\newblock Cambridge University Press, 2008.

\bibitem{chungGraham}
F.~R.~K. Chung, R.~L. Graham, and S.-T. Yau.
\newblock On sampling with {M}arkov chains.
\newblock {\em Random Structures \& Algorithms}, 9(1-2):55--77, 1996.

\bibitem{diaconisBook}
P.~Diaconis.
\newblock {\em Group representations in probability and statistics}, volume~11
  of {\em Institute of Mathematical Statistics Lecture Notes---Monograph
  Series}.
\newblock Institute of Mathematical Statistics, Hayward, CA, 1988.

\bibitem{diaconisWald}
P.~Diaconis.
\newblock A generalization of spectral analysis with application to ranked
  data.
\newblock {\em The Annals of Statistics}, pages 949--979, 1989.

\bibitem{DEfron}
P.~Diaconis and B.~Efron.
\newblock Testing for independence in a two-way table: new interpretations of
  the chi-square statistic.
\newblock {\em Ann. Statist.}, 13(3):845--913, 1985.
\newblock With discussions and with a reply by the authors.

\bibitem{diaconisGangoli}
P.~Diaconis and A.~Gangolli.
\newblock Rectangular arrays with fixed margins.
\newblock In {\em Discrete probability and algorithms}, pages 15--41. Springer,
  1995.

\bibitem{diaconisRamSimper}
P.~Diaconis, A.~Ram, and M.~Simper.
\newblock Double coset markov chains and random transvections, 2022.

\bibitem{diaconisSC}
P.~Diaconis and L.~Saloff-Coste.
\newblock Comparison theorems for reversible markov chains.
\newblock {\em The Annals of Applied Probability}, 3(3):696--730, 1993.

\bibitem{diaconisShahshahani1981}
P.~Diaconis and M.~Shahshahani.
\newblock Generating a random permutation with random transpositions.
\newblock {\em Zeitschrift f{\"u}r Wahrscheinlichkeitstheorie und verwandte
  Gebiete}, 57(2):159--179, 1981.

\bibitem{diaconisShah}
P.~Diaconis and M.~Shahshahani.
\newblock Time to reach stationarity in the {B}ernoulli-{L}aplace diffusion
  model.
\newblock {\em SIAM J. Math. Anal.}, 18(1):208--218, 1987.

\bibitem{diaconisSimper}
P.~Diaconis and M.~Simper.
\newblock Statistical enumeration of groups by double cosets.
\newblock {\em Journal of Algebra}, 607:214--246, 2022.
\newblock Special Issue dedicated to J. Saxl.

\bibitem{diaconisSturmfels}
P.~Diaconis and B.~Sturmfels.
\newblock Algebraic algorithms for sampling from conditional distributions.
\newblock {\em Annals of statistics}, 26(1):363--397, 1998.

\bibitem{diaconis2013random}
P.~Diaconis and P.~M. Wood.
\newblock Random doubly stochastic tridiagonal matrices.
\newblock {\em Random Structures \& Algorithms}, 42(4):403--437, 2013.

\bibitem{dunklXuBook}
C.~F. Dunkl and Y.~Xu.
\newblock {\em Orthogonal Polynomials of Several Variables}.
\newblock Encyclopedia of Mathematics and its Applications. Cambridge
  University Press, 2 edition, 2014.

\bibitem{dutta2021sampling}
U.~Dutta, B.~K. Fosdick, and A.~Clauset.
\newblock Sampling random graphs with specified degree sequences.
\newblock {\em arXiv e-prints}, pages arXiv--2105, 2021.

\bibitem{nestoridi}
A.~Eskenazis and E.~Nestoridi.
\newblock Cutoff for the {B}ernoulli--{L}aplace urn model with $ o (n) $ swaps.
\newblock In {\em Annales de l'Institut Henri Poincar{\'e}, Probabilit{\'e}s et
  Statistiques}, volume~56, pages 2621--2639. Institut Henri Poincar{\'e},
  2020.

\bibitem{kostka}
M.~Fayers.
\newblock A note on {K}ostka numbers.
\newblock {\em arXiv preprint arXiv:1903.12499}, 2019.

\bibitem{griffiths}
B.~Griffiths.
\newblock Orthogonal polynomials on the multinomial distribution.
\newblock {\em Australian Journal of Statistics}, 13(1):27--35, 1971.

\bibitem{griffithsKernel}
R.~C. Griffiths and D.~Spano.
\newblock Multivariate {J}acobi and {L}aguerre polynomials,
  infinite-dimensional extensions, and their probabilistic connections with
  multivariate {H}ahn and {M}eixner polynomials.
\newblock {\em Bernoulli}, 17(3):1095--1125, 2011.

\bibitem{hernek}
D.~Hernek.
\newblock Random generation of 2$\times n$ contingency tables.
\newblock {\em Random Structures \& Algorithms}, 13(1):71--79, 1998.

\bibitem{iliev}
P.~Iliev and Y.~Xu.
\newblock Discrete orthogonal polynomials and difference equations of several
  variables.
\newblock {\em Advances in Mathematics}, 212(1):1--36, 2007.

\bibitem{isaacs}
I.~M. Isaacs.
\newblock {\em Character theory of finite groups}, volume~69.
\newblock Courier Corporation, 1994.

\bibitem{ismail}
M.~E.~H. Ismail.
\newblock {\em Classical and Quantum Orthogonal Polynomials in One Variable}.
\newblock Encyclopedia of Mathematics and its Applications. Cambridge
  University Press, 2005.

\bibitem{jamesKerber}
James.
\newblock {\em The Representation Theory of the Symmetric Group}.
\newblock Encyclopedia of Mathematics and its Applications. Cambridge
  University Press, 1984.

\bibitem{jamesLiebeck}
G.~James, M.~W. Liebeck, and M.~Liebeck.
\newblock {\em Representations and characters of groups}.
\newblock Cambridge University Press, 2001.

\bibitem{jamesBook}
G.~D. James.
\newblock The representation theory of the symmetric groups.
\newblock In {\em Proc. Symposia in Pure Math}, volume~47, pages 111--126,
  1987.

\bibitem{joe1985ordering}
H.~Joe.
\newblock An ordering of dependence for contingency tables.
\newblock {\em Linear algebra and its applications}, 70:89--103, 1985.

\bibitem{kangKlotz}
S.-h. Kang and J.~Klotz.
\newblock Limiting conditional distribution for tests of independence in the
  two way table.
\newblock {\em Comm. Statist. Theory Methods}, 27(8):2075--2082, 1998.

\bibitem{karpThomas}
S.~N. Karp and H.~Thomas.
\newblock {$q$}-{W}hittaker functions, finite fields, and {J}ordan forms.
\newblock {\em arXiv preprint arXiv:2207.12590}, 2022.

\bibitem{khare}
K.~Khare and H.~Zhou.
\newblock Rates of convergence of some multivariate {M}arkov chains with
  polynomial eigenfunctions.
\newblock {\em The Annals of Applied Probability}, 19(2):737--777, 2009.

\bibitem{lancaster}
H.~O. Lancaster.
\newblock {\em The chi-squared distribution}.
\newblock John Wiley \& Sons, Inc., New York-London-Sydney, 1969.

\bibitem{levinPeres}
D.~A. Levin and Y.~Peres.
\newblock {\em Markov chains and mixing times}, volume 107.
\newblock American Mathematical Soc., 2017.

\bibitem{macdonald}
I.~G. Macdonald.
\newblock {\em Symmetric functions and {H}all polynomials}.
\newblock Oxford university press, 1998.

\bibitem{midtown}
B.~MacMahon.
\newblock Mental health in the metropolis: The midtown {M}anhattan study, 1963.

\bibitem{marshallOlkinBooks}
A.~W. Marshall, I.~Olkin, and B.~C. Arnold.
\newblock {\em Inequalities: theory of majorization and its applications},
  volume 143.
\newblock Springer, 1979.

\bibitem{nestoridi2020}
E.~Nestoridi and O.~Nguyen.
\newblock On the mixing time of the {D}iaconis--{G}angolli random walk on
  contingency tables over {$\mathbb{Z}/q \mathbb{Z}$}.
\newblock In {\em Annales de l'Institut Henri Poincar{\'e}, Probabilit{\'e}s et
  Statistiques}, volume~56, pages 983--1001. Institut Henri Poincar{\'e}, 2020.

\bibitem{paguyo}
J.~Paguyo.
\newblock Fixed points, descents, and inversions in parabolic double cosets of
  the symmetric group.
\newblock {\em arXiv preprint arXiv:2112.07728}, 2021.

\bibitem{pang}
C.~Pang.
\newblock Lumpings of algebraic {M}arkov chains arise from subquotients.
\newblock {\em Journal of Theoretical Probability}, 32(4):1804--1844, 2019.

\bibitem{ingram}
S.~RE~Ingram.
\newblock Some characters of the symmetric group.
\newblock {\em Proceedings of the American Mathematical Society}, pages
  358--369, 1950.

\bibitem{salzman}
J.~Salzman.
\newblock {\em Spectral analysis with Markov chains}.
\newblock PhD thesis, Stanford University, 2007.
\newblock Copyright - Database copyright ProQuest LLC; ProQuest does not claim
  copyright in the individual underlying works; Last updated - 2021-09-28.

\bibitem{scarabotti}
F.~Scarabotti.
\newblock Time to reach stationarity in the bernoulli--laplace diffusion model
  with many urns.
\newblock {\em Advances in Applied Mathematics}, 18(3):351--371, 1997.

\bibitem{serre}
J.-P. Serre.
\newblock {\em Linear representations of finite groups}, volume~42.
\newblock Springer, 1977.

\bibitem{stanley}
R.~P. Stanley.
\newblock {\em Enumerative Combinatorics}, volume~1 of {\em Cambridge Studies
  in Advanced Mathematics}.
\newblock Cambridge University Press, 2 edition, 2011.

\end{thebibliography}

\end{document}